\documentclass[reqno,12pt,letterpaper]{amsart}

\usepackage{style}

\begin{document}

\title[Existence of quasinormal modes for Kerr--AdS Black Holes]
{Existence of quasinormal modes for Kerr--AdS Black Holes}
\author{Oran Gannot}
\email{gannot@northwestern.edu}
\address{Department of Mathematics, Lunt Hall, Northwestern University,
Evanston, IL 60208, USA}

\begin{abstract}
This paper establishes the existence of quasinormal frequencies converging exponentially to the real axis for the Klein--Gordon equation on a Kerr--AdS spacetime when Dirichlet boundary conditions are imposed at the conformal boundary. The proof is adapted from results in Euclidean scattering about the existence of scattering poles generated by time-periodic approximate solutions to the wave equation.
\end{abstract}

\maketitle

\section{Introduction}

Recent years have seen substantial progress in the analysis of linear fields on asymptotically anti-de Sitter (aAdS) backgrounds. Understanding the boundedness and decay of solutions to the linear wave equation is a prerequisite for studying nonlinear (in)stability of aAdS spacetimes. Furthermore, linear fields play a distinguished role in the AdS/CFT correspondence. 

One common way of describing linear perturbations of black holes is through the quasinormal frequency (QNF) spectrum, consisting of complex frequencies $\lambda$ associated with oscillating and decaying quasinormal mode (QNM) solutions to the wave equation. Despite a sizable literature concerning the QNFs of aAdS black holes, there are still many open questions in the mathematical study of these objects.

One notable conjecture is that aAdS black holes should display QNFs rapidly converging to the real axis. This phenomenon was first observed for the Schwarzschild--AdS solution through numerical and formal WKB analysis \cite{dias:2012:cqg:b,festuccia2009bohr}. The existence of such weakly damped modes is consistent with at most logarithmic local energy decay in time for solutions of the wave equation \cite{gannot:2014,holzegel2013decay,holzegel2014quasimodes}. In addition, the asymptotic relationship between these QNFs and the spectrum of global AdS at high energies provides a link between the conjectured instabilities of global AdS and Kerr--AdS
\cite{balasubramanian2014holographic,bizon:2014:grg,bizon:2011:prl,bizon2015resonant,buchel2015conserved,craps:2014vaa,craps:2014jwa,dias:2012:cqg:b,dias:2012:cqg,holzegel2013decay}.

In \cite{gannot:2014}, the existence of QNFs converging exponentially to the real axis was rigorously established for Schwarzschild--AdS black holes. This was based on a construction of quasimodes (not to be confused with QNMs), which are time-periodic approximate solutions to the Klein--Gordon equation. Due to the spherical symmetry of the Schwarzschild--AdS metric, the wave equation separates into a family of one dimensional equations indexed by angular momenta $l$, each of which fits into the framework of classical scattering theory on the half-line. The quasimodes constructed in \cite{gannot:2014} reflect the existence of null-geodesics which are trapped between the conformal boundary and an effective potential barrier. By applying general results of Tang--Zworski \cite{tang1998quasimodes} from Euclidean scattering, it was possible to conclude the existence of QNFs converging exponentially to the real axis as $l \rightarrow \infty$.

For the rotating Kerr--AdS solution, it is more difficult to demonstrate the existence of long-lived QNMs. Because of the more complicated structure of the separated equations, a WKB analysis is harder to perform. Futhermore, the author is not aware of any numerical studies of QNFs for Kerr--AdS in the high frequency limit. 

Nevertheless exponentially accurate quasimodes have been constructed for Kerr--AdS metrics by Holzegel--Smulevici \cite{holzegel2014quasimodes} (at a linearized level, the construction is the same as in \cite{gannot:2014}); their motivation was to establish a logarithmic lower bound for energy decay. A natural question is whether the methods of Tang--Zworski can be adapted to deduce the existence of QNFs converging to the real axis from these quasimodes. This is accomplished here by establishing the following theorem. 

\begin{theo} [Main Theorem]\label{theo:maintheorem}
	Fix a cosmological constant $\Lambda < 0$, black hole mass $M> 0$, rotation speed $a \in \mathbb{R}$ satisfying $|a|^2 < 3/|\Lambda|$, and Klein--Gordon mass $\nu > 0$; the location of the horizon is at $r=r_+$. Let $X_\delta = (r_+ - \delta,\infty)\times \mathbb{S}^2$ for $\delta > 0$ sufficiently small, and $t^\star$ be the Kerr-star time coordinate. Then there exists a sequence of complex numbers and smooth functions 
	\[
	\lambda_\ell \in \mathbb{C}, \quad u_\ell \in C^\infty(X_\delta), \quad \ell \geq L
	\]
	for some $L \geq 0$ with the following properties.
	
	\begin{enumerate} \itemsep6pt
		\item \label{it:kg} The functions  $v_\ell = e^{-i \lambda_\ell t^\star}u_\ell$ solve the Klein--Gordon equation
		\[
		\Box_g v_\ell +\frac{|\Lambda|}{3}(\nu^2-9/4) v_\ell = 0.
		\]
		
		\item \label{it:width} The complex frequencies $\lambda_\ell$ satisfy
			\[
			\ell/C < \Re \lambda_\ell < C\ell, \quad 0 < - \Im \lambda_\ell < e^{-\ell/D}
			\]
			for some $C, D > 0$.
			
		\item \label{it:outgoing} Each $u_\ell$ is smooth up to $\{ r = r_+ - \delta\}$ and has a nonzero restriction to $\{ r > r_+\}$.
		
		\item \label{it:dirichlet}  Each $u_\ell$ satisfies
		\[
		\int_{X_\delta} |u_\ell|^2 \, r^{-1} \,dS_t < \infty,
		\] 
		where $dS_t$ is the surface measure induced on $X_\delta$ by $g$, and moreover 
		\[
		\lim_{r\rightarrow \infty} r^{3/2-\nu}u_\ell = 0.
		\]
		
		\item \label{it:axi} Each $u_\ell$ is axisymmetric in the sense that $D_{\phi}u =0$, where $\phi$ is the azimuthal angle on $\mathbb{S}^2$.
\end{enumerate}
\end{theo}

The frequencies $\lambda_\ell$ in Theorem \ref{theo:maintheorem} are QNFs, and $u_\ell$ are associated QNMs. This theorem is deduced from the existence of \emph{real} frequencies $\lambda_\ell^\sharp \in \mathbb{R}$ and functions $u^\sharp_\ell$ supported in $\{ r > r_+\}$ for which \eqref{it:width}, \eqref{it:dirichlet}, \eqref{it:axi} hold, and for which \eqref{it:kg} is approximately satisfied (see Theorem \ref{theo:quasimodes} below for a more precise statement regarding these quasimodes).

\begin{rem} \begin{inparaenum} \item The functions $v_\ell$ are smooth solutions to the Klein--Gordon equation in a region extending past the event horizon. This reflects the outgoing nature of QNMs. One also obtains a nonzero solution to the Klein--Gordon equation in the black hole exterior by restriction.
		
		\item The square integrability condition \eqref{it:dirichlet} constrains the growth of $u_\ell$ as $r\rightarrow \infty$. In fact, $u_\ell$ has an asymptotic expansion near the conformal boundary determined by the indicial roots of the Klein--Gordon operator \cite[Proposition 4.17]{gannot:bessel}. The coefficient of $r^{\nu - 3/2}$ vanishes, which is a type of Dirichlet boundary condition.

\end{inparaenum} \end{rem}

As will be clear from the proof, Theorem \ref{theo:maintheorem} is a \emph{black box} in the sense that any sequence of quasimodes satisfying the conditions of Theorem \ref{theo:quasimodes} can be plugged into the machinery to obtain a corresponding sequence of QNFs. Furthermore, there is a relationship
\begin{equation} \label{eq:close}
| \lambda_\ell - \lambda_\ell^\sharp | \leq e^{-\ell/C}
\end{equation}
for some $C > 0$, so any description of $\lambda_\ell^\sharp$ modulo $\mathcal{O}(\ell^{-\infty})$ gives a corresponding description for $\Re \lambda_\ell$ as $\ell \rightarrow \infty$. The imprecise localization of $\Re \lambda_\ell$ in Theorem \ref{theo:maintheorem} is therefore only due to the inexact nature of the quasimodes constructed in \cite{holzegel2014quasimodes}; this should be compared to the main theorem of \cite{gannot:2014} in the simpler Schwarzschild--AdS setting, where $\Re \lambda_\ell$ admits an asymptotic expansion in powers of $\ell^{-1/2}$.

\begin{rem} \begin{inparaenum} \item Exponential accuracy of the quasimodes is not necessary to deduce the existence of QNFs --- see the proof of Theorem \ref{theo:maintheorem}, as well as \cite{stefanov1999quasimodes,stefanov2005approximating,tang1998quasimodes} for more general results in the Euclidean setting. Less accurate quasimodes could potentially result in slower convergence to the real axis, as well as a weaker version of \eqref{eq:close}.
		
\item Theorem \ref{theo:maintheorem} still applies if the quasimodes are supported on finitely many eigenspaces of $D_\phi$ (uniformly in $\ell$).

\item Quasimodes satisfying more general self-adjoint boundary conditions also yield a version of Theorem \ref{theo:maintheorem} --- see the discussion in Section \ref{subsect:QNMs}, as well as the statements of Propositions \ref{prop:uppermodes}, \ref{prop:realQNF}, \ref{prop:exponentialbound}.

\end{inparaenum} \end{rem}

An important question is to what extent the passage from quasimodes to QNFs depends on the exact form of the Kerr--AdS metric. Observe already that axial symmetry of the Kerr--AdS metric plays an important role in the statement of Theorem \ref{theo:maintheorem}. This allows one to compensate for the fact that the Killing field $\partial_t$ is not timelike near the event horizon for $a \neq 0$; see Propositions \ref{prop:uppermodes}, \ref{prop:realQNF} below. For the full range of parameters $|a|^2 < 3/|\Lambda|$, these propositions apply to stationary, \emph{axisymmetric} perturbations of the metric (throughout, perturbations are assumed to be small). 
 
On the other hand observe that Proposition \ref{prop:exponentialbound}, the final ingredient in the proof of Theorem \ref{theo:maintheorem}, is always stable under stationary perturbations of the metric. The analysis is based on a general microlocal framework developed by Vasy \cite{vasy:2013}, which is highly robust --- see \cite[Section 2.7]{vasy:2013} for a precise discussion.
 \section{Proof of Theorem \ref{theo:maintheorem}}

The proof of Theorem \ref{theo:maintheorem} relies on three key results, Propositions \ref{prop:uppermodes}, \ref{prop:realQNF}, \ref{prop:exponentialbound}, stated in the next section. The proof of Proposition \ref{prop:exponentialbound} is delayed until Section \ref{sect:exponentialbound}, while Propositions \ref{prop:uppermodes}, \ref{prop:realQNF} are proved in Section \ref{sect:uppermodes} at the end of the paper.

As mentioned in the introduction, Theorem \ref{theo:maintheorem} was established for Schwarzschild--AdS metrics previously \cite{gannot:2014} (namely when the angular momentum $a$ vanishes). Compared with the rotating case considered here, several simplifications were available:
\begin{enumerate} \itemsep6pt
	\item Staticity and spherical symmetry of the metric allowed for a decomposition of the stationary wave operator $P(\lambda)$ (defined in Section \ref{subsect:QNMs} below) into a family of one-dimensional Euclidean Schr\"odinger operators $P_l - \lambda^2$, where the index $l$ corresponds to a fixed space of spherical harmonics.
	\item The Killing field $\partial_t$ is timelike outside the event horizon, so $P(\lambda)$ is elliptic in the exterior. This made it possible to realize each $P_l$ as a \emph{self-adjoint} operator, and therefore the analogue of Proposition \ref{prop:uppermodes} followed from standard facts about resolvents of self-adjoint operators away from the spectrum.
	
	\item Ellipticity combined with analyticity of the metric allows one to meromorphically continue each resolvent $(P_l-\lambda^2)^{-1}$ into the lower half-plane by the method of complex scaling \cite{sjostrand1991complex} (observe that the exact Kerr--AdS metric is also analytic, and while complex scaling has been successfully applied to some analytic rotating black hole metrics by Dyatlov \cite{dyatlov:2011:cmp}, that method is not very robust). Therefore QNFs for a fixed $l$ can be defined as poles of the continued resolvent. Proposition \ref{prop:realQNF}, namely the lack of nonzero poles on the real axis, becomes an elementary observation about ordinary differential equations.
	
	\item The exponential resolvent estimate of Proposition \ref{prop:exponentialbound} is well known in Euclidean scattering \cite{petkov2001semi,tang1998quasimodes}, hence could be applied directly.
\end{enumerate}
As will be clear from the proofs and discussions preceding them, each of these items becomes more involved in the rotating case. Primarily this is due to the lack of staticity and spherical symmetry of the metric, as well as the failure of ellipticity.  Even giving an effective definition QNFs is nontrivial --- see Theorem \ref{theo:QNF} below. However, once equipped with Propositions \ref{prop:uppermodes}, \ref{prop:realQNF}, \ref{prop:exponentialbound}, the proof of Theorem \ref{theo:maintheorem} is essentially the same as in the Schwarzschild--AdS setting (or more generally in the setting of \cite{tang1998quasimodes}), since it relies only on some abstract complex analysis.

\subsection{Kerr--AdS metric}

The Kerr--AdS metric is determined by three parameters $(\Lambda,M,a)$, where $\Lambda <0$ is the negative cosmological constant, $M > 0$ is the black hole mass, and $a \in \mathbb{R}$ is the angular momentum per unit mass. It is always possible to choose units such that $\Lambda = -3$, in which case the rotation speed is required to satisfy the regularity condition $|a| < 1$. Introduce the quantities 
\begin{gather*}
\Delta_r = \left(r^2+a^2\right)\left(1+ {r^2}\right) - 2M r,  \quad 
\Delta_\theta = 1- a^2\cos^2\theta, \\
\varrho^2 = r^2 + a^2 \cos^2 \theta.
\end{gather*}
When it exists, the largest positive root of $\Delta_r$ is denoted by $r_+$. The Kerr--AdS metric is given in Boyer--Lindquist coordinates by the expression
\begin{align*}
g  =& -\varrho^2 \left( \frac{d r^2}{\Delta_r} + \frac{d\theta^2}{\Delta_\theta} \right) - \frac{\Delta_\theta \sin^2 \theta}{\varrho^2(1-a^2)^2 }\left(a\, d t - (r^2 + a^2) \, d{ \phi}\right)^2 \\&+ \frac{\Delta_r}{\varrho^2(1-a^2)^2 }\left(d t - a\sin^2\theta \, d{\phi}\right)^2.
\end{align*}
Here $(\theta,\phi) \in (0,\pi) \times \mathbb{R}/(2\pi\mathbb{Z})$ are spherical coordinates on $\mathbb{S}^2$, while $t \in \mathbb{R}$ and $r \in (r_+,\infty)$. The dual metric $g^{-1}$ is given by
\begin{align*}
g^{-1} = & \ -\frac{\Delta_r}{\varrho^2} \partial_{r}^2 - \frac{\Delta_{\theta}}{\varrho^2} \partial_{\theta}^2 - \frac{(1-a^2)^2}{\varrho^2 \Delta_\theta \sin^2  \theta} \left( a \sin^2 \theta \partial_{{t}} + \partial_{{\phi}}\right)^2 \\&+ \frac{(1-a^2)^2}{\varrho^2\Delta_r}\left((r^2+a^2)\partial_{{t}} + a \partial_{{\phi}}\right)^2.
\end{align*}
The metric $g$ becomes singular at the event horizon $\{ r= r_+\}$, but this can be remedied by a change of variables. Set
 \begin{equation} \label{eq:kerrstar}
 t^\star = t + F_t(r); \quad \phi^\star = \phi + F_\phi(r),
 \end{equation}
 where $F_t,\, F_\phi$ are smooth functions on $(r_+,\infty)$ such that
 \begin{equation} \label{eq:changeofcoords}
 F'_t(r) = \frac{1-a^2}{\Delta_r}(r^2+a^2) + f_+(r), \quad  F'_\phi(r) = a\frac{1-a^2}{\Delta_r}.
 \end{equation}
 Here $f_+$ is a smooth function, and $F_t,\, F_\phi$ are chosen to vanish at infinity (in particular, $f_+$ must tend to zero as well). The dual metric in $(t^\star,r,\theta,\phi^\star)$ coordinates reads
 \begin{align} \label{eq:extendedmetric}
 \varrho^2 g^{-1} = &-\Delta_r 
 \left( \partial_r + f_+ \partial_{t^\star} \right)^2 - \Delta_\theta \partial_\theta^2 - 2(1-a^2) \left( \partial_r + f_+ \partial_{t^\star} \right)\left((r^2+a^2)\partial_{t^\star}  +a \partial_{\phi^\star} \right) \notag \\ &- \frac{(1-a^2)^2}{ \Delta_\theta \sin^2 \theta} \left( a \sin^2 \theta \partial_{t^\star} +  \partial_{{\phi^\star}}\right)^2.
 \end{align}
 Given $\delta \geq 0$, let $X_\delta = (r_+ -\delta,\infty) \times \mathbb{S}^2$ and $\mathcal{M}_\delta = X_\delta \times \mathbb{R}_{t^\star}$. Then \eqref{eq:extendedmetric} defines a Lorentzian metric, also denoted by $g$, on $\mathcal{M}_\delta$ for $\delta>0$ sufficiently small. There are two Killing vector fields, 
 \[
 T = \partial_{t^\star},\quad \Phi = \partial_{\phi^\star},
 \] 
 corresponding to stationarity and axisymmetry of Kerr--AdS.  Note that $T = \partial_t$ and $\Phi = \partial_\phi$ in the Boyer--Lindquist coordinates. The level set 
 \[
 \{ t^\star = 0\}\subseteq \mathcal{M}_\delta
 \] 
 is identified with $X_\delta$, and the function $f_+$ is chosen such that $X_\delta$ is spacelike; one explicit choice is $f_+(r) = (a^2-1)/(r^2+1)$.

 The spacetime $\mathcal{M}_\delta$ can be partially compactified by gluing in two boundary components 
 \[
 \mathcal{H}_\delta = \{ r= r_+-\delta\}, \quad \mathcal{I} = \{ r^{-1} = 0\}.
 \]
 Let $\OL{\mathcal{M}}_\delta = \mathcal{M}_\delta \cup \partial \mathcal{M}_\delta$, where $\partial\mathcal{M}_\delta = \mathcal{H}_\delta \cup \mathcal{I}$. Near $\mathcal{I}$, introduce the boundary defining function $s= r^{-1}$. From \eqref{eq:extendedmetric}, the conformal multiple $s^2 g$ has a smooth extension up to $\mathcal{I}$, and furthermore $s^{-2} g^{-1}(ds,ds)= -1$ on $\mathcal{I}$. There is a corresponding compactification at the level of time slices,
 \begin{equation} \label{eq:Xcompactification}
 \OL{X}_\delta = X_\delta \cup H_\delta \cup Y,
 \end{equation}
 where $H_\delta = \mathcal{H}_\delta \cap \{t^\star = 0\}$ and $Y = \mathcal{I} \cap \{t^\star = 0\}$.
 
 \begin{rem}
  If $f_+(r) = (a^2-1)/(r^2+1)$, then $\mathcal{I}$ intersects $\OL{X}_\delta$ orthogonally with respect to $s^2g$. This condition guarantees that the stationary Klein--Gordon operator $P(\lambda)$, defined in Section \ref{subsect:QNMs} below, is a Bessel operator in the sense of \cite[Section 1.5]{gannot:bessel} --- see \cite[Section 2]{gannot:bessel} for more details.
  \end{rem}

Finally, it must be assumed that the root of $\Delta_r$ at $r=r_+$ is simple, or equivalently $\Delta'_r(r_+) > 0$. This implies that the surface gravity 
\begin{equation} \label{eq:surfacegravity}
\varkappa = \frac{\Delta'_r(r_+)}{2(1-a^2)(r_+^2+a^2)}
\end{equation}
 of the event horizon is positive.

\subsection{Quasinormal modes} \label{subsect:QNMs}

Let $P(\lambda)$ denote the operator on $X_\delta$ obtained by replacing $D_{t^\star}$ with $-\lambda \in \mathbb{C}$ in the expression for $\varrho^2(\Box_g + \nu^2 - 9/4)$. Because the metric is stationary, $P(\lambda)$ is a well defined second order operator: if $u \in C^\infty(X_\delta)$ and $\lambda \in \mathbb{C}$, then
\[
P(\lambda)u = e^{i\lambda t^\star} \varrho^2 \left(\Box_g + \nu^2 - 9/4 \right) e^{-i\lambda t^\star}u.
\]
 Since $\Phi$ is also Killing, $P(\lambda)$ additionally preserves the space of distributions
\[
\mathcal{D}'_k(X_\delta) = \{ u \in \mathcal{D}'(X_\delta) : (\Phi-ik)u=0
\}.
\]
Define $\mathcal{L}^2(X_\delta)$ as the space of distributions $u$ for which
\[
\| u \|_{\mathcal L^2(X_\delta)} = \int_{X_\delta} |u|^2 \, r^{-1} \, dS_t < \infty,
\]
where $dS_t$ is the surface measure induced on $X_\delta$ by $g$. Motivated by the renormalization scheme introduced in \cite{warnick:2013:cmp}, the stationary energy space is defined with respect to the conjugated differential
\[
\tilde{d} u = r^{\nu-3/2} d \left( r^{3/2-\nu} u \right).
\]
If $h$ is the restriction of $-g$ to $TX_\delta$ (which is positive definite since $X_\delta$ is spacelike), then $\Sob^1(X_\delta)$ is defined as the space of distributions $u$ for which
\[
\| u \|_{\Sob^1(X_\delta)} = \int_{X_\delta} \left( |u|^2 + r^2\, h^{-1}\big(\tilde{d}u,\tilde{d}\bar u\big) \right)\, r^{-1} \, dS_t < \infty.
\]
The various powers of $r$ appearing as weights originate from natural energy identities \cite{holzegel2010massive,holzegel:2012:jhde,holzegel2013decay,warnick:2013:cmp,warnick:2015:cmp}, see also Section \ref{subsect:approximateinfinity} of this paper.

If $\nu\in (0,1)$, then boundary conditions must be imposed at the conformal boundary $Y$ to obtain the Fredholm property for $P(\lambda)$, recalling \eqref{eq:Xcompactification} for the definition of $Y$. Given $u \in \Sob^1(X_\delta)$ such that  $P(\lambda)u\in \mathcal{L}^2(X_\delta)$, the following boundary values are well defined in the Sobolev sense:
\begin{equation} \label{eq:rtraces}
\gamma_- u = \lim_{r\rightarrow \infty} r^{3/2-\nu}u, \quad \gamma_+ u = \lim_{r\rightarrow\infty} r^{2\nu+1}\partial_r (r^{3/2-\nu}u).
\end{equation}
Thus $\gamma_- u$ and $\gamma_+ u$ are analogues of Dirichlet and Neumann data for $u$. In fact, $P(\lambda)$ is elliptic at the boundary $Y$ in the sense of Bessel operators \cite[Section 1.5]{gannot:bessel}, so from the remark following \cite[Lemma 4.13]{gannot:bessel} and \cite[Proposition 3.6]{gannot:bessel}, one has the Sobolev regularity $\gamma_\pm u \in H^{\mp \nu}(Y)$.

Throughout this paper only self-adjoint Dirichlet or Robin boundary conditions are considered. The trace operator $B$ is therefore
\[
B = \gamma_- \text{ or } B= \gamma_+ + \beta \gamma_-,
\] 
where $\beta \in C^\infty(Y ;\mathbb{R})$ is a real valued function on the conformal boundary. Now define the domain
\begin{equation} \label{eq:Xspace}
\mathcal{X}(X_\delta) = \begin{cases} u \in \Sob^1(X_\delta): P(0)u \in \mathcal{L}^2(X_\delta)  &\text{ if $\nu \geq 1$,} \\[2pt]
u \in \Sob^1(X_\delta): P(0)u \in \mathcal{L}^2(X_\delta) \text{ and } Bu = 0  &\text{ if $\nu \in (0,1)$.}\end{cases}
\end{equation}
This is a Hilbert space for the norm $\| u \|_{\mathcal{X}(X_\delta)} = \| u \|_{\Sob^1(X_\delta)} + \|P(0)u\|_{\mathcal L^2(X_\delta)}$.

The following theorem was proved in \cite{gannot:2014:kerr} (and also in \cite{warnick:2015:cmp} under slightly more restrictive hypotheses).

\begin{theo} \label{theo:QNF}
For each $\nu > 0$ the operator $P(\lambda) : \mathcal{X}(X_\delta) \rightarrow \mathcal{L}^2(X_\delta)$ is Fredholm of index zero in the half-plane $\{\Im \lambda > -\tfrac{1}{2}\varkappa\}$, where the surface gravity $\varkappa > 0$ is given by \eqref{eq:surfacegravity}. Furthermore, 
\[
R(\lambda) = P(\lambda)^{-1} : \mathcal{L}^2(X_\delta) \rightarrow \mathcal{X}(X_\delta)
\] 
is a meromorphic family of operators in $\{ \Im \lambda > -\tfrac{1}{2}\varkappa\}$ which is holomorphic in any angular sector of the upper half-plane provided $|\lambda|$ is sufficiently large.
\end{theo} 
QNFs are defined as poles of the meromorphic family $R(\lambda)$. More information is needed about possible QNFs in the upper half-plane --- this is closely related to the boundedness of solutions to the Klein--Gordon equation. One remarkable property of rotating Kerr--AdS metrics is that for $|a| < r_+^2$ the vector field 
\[
K = T + \frac{a}{r_+^2+a^2} \Phi
\]
generating the Killing horizon $\mathcal{H}^+ =\mathcal{H}_0$ is in fact everywhere timelike on $\mathcal{M}_0$. The existence of such a vector field eliminates possible superradiant phenomena. For black holes satisfying the Hawking--Reall bound $|a| <r_+^2$, boundedness \cite{holzegel:2012wt} (and in fact logarithmic decay \cite{holzegel2013decay}) is known for solutions to the Klein--Gordon equation under Dirichlet boundary conditions. If the condition $|a| < r_+^2$ is violated, then it is possible to construct mode solutions $e^{-i\lambda t^\star}u$ which grow exponentially in time \cite{dold:2015cqa}, corresponding to QNFs in $\{\Im \lambda > 0\}$.

\begin{rem}
	Interestingly, even for Neumann boundary conditions boundedness has not been established for the expected range of black hole parameters $|a| < r_+^2$, see the conjecture in \cite[Section 5]{holzegel:2012wt}.
\end{rem}

For mode solutions $e^{-i\lambda t^\star}u$ many of the delicate issues involving lower order terms and boundary conditions are overcome in the high frequency limit. In fact, by working at a fixed axial mode it is not even necessary to restrict below the Hawking--Reall bound; of course this is only possible because of the axisymmetry of the Kerr--AdS metric. In that case the Robin function $\beta$ should satisfy $\Phi\beta = 0$ as well. Note that $R(\lambda)$ decomposes as a direct sum of operators 
\[
R(\lambda,k) : \mathcal{L}^2(X_\delta) \cap \mathcal{D}_k'(X_\delta) \rightarrow \mathcal{X}(X_\delta) \cap \mathcal{D}_k'(X_\delta),
\]
where $R(\lambda,k)$ is the restriction of $R(\lambda)$ to the closed subspace $\mathcal{L}^2(X_\delta) \cap \mathcal{D}_k'(X_\delta)$. In particular, $\lambda_0$ is a QNF if and only if there exists $k_0 \in \mathbb{Z}$ such that $\lambda_0$ is a pole of $R(\lambda,k_0)$. The following crucial proposition quantifies the absence of QNFs at a fixed axial mode in the upper half-plane at high frequencies.
\begin{prop} \label{prop:uppermodes}
Given $k \in \mathbb{Z}$ there exists $C_0>0$ such that if $\lambda \in \mathbb{R}\setminus [-C_0,C_0] + i(0,\infty)$, then  
	\begin{equation} \label{eq:uppermodes}
	\| u \|_{\mathcal{L}^2(X_0)} \leq \frac{C}{|\lambda|\Im \lambda} \|P(\lambda)u \|_{\mathcal{L}^2(X_0)}
	\end{equation}
	for each  $u \in \mathcal{X}(X_0) \cap \mathcal{D}'_k(X_0)$.
\end{prop}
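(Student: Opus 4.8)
The plan is to reduce to a fixed axial mode and run two coupled energy identities. On $\mathcal D'_k(X_0)$ the operator $P(\lambda)$ restricts to a polynomial family $P(\lambda,k)=P_0+\lambda P_1+\lambda^2 P_2$ of operators in $(r,\theta)$, where $P_2=\varrho^2 g^{-1}(dt^\star,dt^\star)$ is multiplication by a function bounded below by some $c_0>0$ on $\OL X_0$ (because $\{t^\star=\mathrm{const}\}$ is spacelike, independently of where $T=\partial_{t^\star}$ fails to be timelike), $P_1$ is first order with \emph{bounded} coefficients on $(r_+,\infty)$ — in fact $\mathcal O(r^{-1})$ near $\mathcal I$ — thanks to the Kerr--star choice $f_+(r)=(a^2-1)/(r^2+1)$, and $P_0=P(0,k)$. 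Since $e^{-i\lambda t^\star}u$ is a genuine mode and $\varrho^2\Box_g$ is formally self-adjoint, $P_0,P_1,P_2$ are symmetric on $\mathcal L^2(X_0)$; with the self-adjoint (Dirichlet, or real Robin with $\Phi\beta=0$) condition built into $\mathcal X(X_0)$ and the renormalization defining $\Sob^1$, the operator $P(0,k)$ is also self-adjoint and semibounded. The first identity, a spacetime multiplier estimate with the horizon Killing generator, will control $\|u\|_{\Sob^1(X_0)}$; the second, the imaginary part of $\langle P(\lambda)u,u\rangle_{\mathcal L^2}$, will produce \eqref{eq:uppermodes}.

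For the first identity, work on the extended spacetime $\mathcal M_\delta$ and integrate the divergence of the current $\mathbb T[v]\cdot K$, where $v=e^{-i\lambda t^\star}u$ and $K=T+\tfrac{a}{r_+^2+a^2}\Phi$, over a slab $\{0\le t^\star\le\tau\}\cap\{r>r_+-\delta\}$. On the mode $Kv=-i\omega v$ with $\omega=\lambda-\tfrac{ak}{r_+^2+a^2}$, so $\Im\omega=\Im\lambda$ and $|\omega|\asymp|\lambda|$ for $\lambda\in\mathbb R\setminus[-C_0,C_0]+i(0,\infty)$; every term carries the overall factor $e^{2\Im\lambda\, t^\star}$, which divides out. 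The result is a stationary identity equating $2\Im\lambda$ times the renormalized $K$-energy $\mathcal E_K[u]$ on $X_0$, plus the horizon flux (which degenerates to $|\omega|^2$ times the boundary $L^2$-mass of $u$, hence is $\ge 0$), plus the fluxes through the spacelike hypersurface $\{r=r_+-\delta\}$ and through $\mathcal I$, to a term that is $\mathcal O\big(|\lambda|\,\|P(\lambda)u\|_{\mathcal L^2}\|u\|_{\mathcal L^2}\big)$. The $\mathcal I$-flux is absorbed using the renormalized energy and the reality of $\beta$; the $\{r=r_+-\delta\}$-flux is controlled (this is a compact spacelike slice behind the horizon, where the equation is well-behaved, and one may take the multiplier timelike there at the cost of a harmless compactly supported error); and $\mathcal E_K[u]\ge c\|u\|_{\Sob^1(X_0)}^2-C\|u\|_{\mathcal L^2}^2$, the defect being supported only on the compact region where $K$ is spacelike — there is no such region when $|a|<r_+^2$, which is the role of axisymmetry. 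One obtains
\[
\|u\|_{\Sob^1(X_0)}^2\;\lesssim\;\|u\|_{\mathcal L^2}^2+\tfrac{|\lambda|}{\Im\lambda}\,\|P(\lambda)u\|_{\mathcal L^2}\|u\|_{\mathcal L^2}.
\]
(When $|a|<r_+^2$ the $K$-energy is coercive and contains a positive $|\lambda|^2\|u\|_{\mathcal L^2}^2$ term, so this identity \emph{alone} gives \eqref{eq:uppermodes}, recovering the Schwarzschild--AdS-type argument.)

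For the second identity, symmetry of $P_0,P_1,P_2$ gives $\langle P_j u,u\rangle_{\mathcal L^2}\in\mathbb R$, hence $\Im\langle P(\lambda)u,u\rangle_{\mathcal L^2}=\Im\lambda\,\langle(P_1+2\Re\lambda\,P_2)u,u\rangle_{\mathcal L^2}$. Using $P_2\ge c_0$, the bound $|\langle P_1u,u\rangle|\le C\|u\|_{\Sob^1}\|u\|_{\mathcal L^2}$, the $\Sob^1$ estimate above, and $|\Re\lambda|>C_0$ large enough that the $\|u\|_{\mathcal L^2}^2$-part of the resulting error is dominated by $c_0|\Re\lambda|\|u\|_{\mathcal L^2}^2$, one gets $\langle(P_1+2\Re\lambda\,P_2)u,u\rangle\ge c_0|\Re\lambda|\,\|u\|_{\mathcal L^2}^2-C'\sqrt{|\lambda|/\Im\lambda}\,\|P(\lambda)u\|_{\mathcal L^2}^{1/2}\|u\|_{\mathcal L^2}^{3/2}$. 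Combined with $|\Im\langle P(\lambda)u,u\rangle|\le\|P(\lambda)u\|_{\mathcal L^2}\|u\|_{\mathcal L^2}$, this is a quadratic inequality in $\|u\|_{\mathcal L^2}^{1/2}$; solving it and using $|\lambda|\asymp|\Re\lambda|$ yields \eqref{eq:uppermodes}.

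I expect the main obstacle to be the competition between $\Im\lambda\,\langle P_1u,u\rangle$ and the leading term $2\Re\lambda\,\Im\lambda\,\langle P_2u,u\rangle$, which are a priori both of order $|\lambda|\,\Im\lambda\,\|u\|_{\mathcal L^2}^2$; moreover the first-order term cannot be removed by conjugating $P(\lambda)$ with $e^{i\lambda g(r)}$, since the required $g$ is singular at the horizon (where $\Delta_r$ vanishes but the $\lambda\partial_r$-coefficient does not). Closing the estimate therefore hinges on having the $\Sob^1$ bound with \emph{no} power of $\lambda$ multiplying $\|u\|_{\mathcal L^2}^2$ — which is exactly what the $K$-energy identity supplies but the real part of $\langle P(\lambda)u,u\rangle$ does not — and this in turn relies on $K$ being null on, and timelike near, the horizon for the full range $|a|<1$, and on the Kerr--star normalization keeping the coefficients of $P_1$ bounded. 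The remaining work is bookkeeping: the degeneracy at $\{r=r_+\}$ is absorbed into the sign-definite horizon flux (using that $u$ is regular up to the horizon in Kerr--star coordinates, being a restriction of an element of $\mathcal X(X_\delta)$ via Theorem \ref{theo:QNF}), and the flux at $\mathcal I$ into the renormalized energy together with the self-adjoint boundary condition.
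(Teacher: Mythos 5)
Your overall strategy (energy identity with the horizon Killing generator $K$, plus the crucial use of the fixed axial mode) points in the right direction, but two steps as written do not hold up. First, the coercivity claim $\mathcal{E}_K[u]\geq c\|u\|_{\Sob^1(X_0)}^2-C\|u\|_{\mathcal{L}^2(X_0)}^2$ with "the defect supported only on the compact region where $K$ is spacelike" is not justified, and the compactness claim is false in exactly the regime where it matters: for $|a|\geq r_+^2$ (allowed here, since only $|a|<1$ is assumed) the set where $K$ fails to be timelike reaches the conformal boundary near the equator, because $g(K,K)\sim \frac{r^2}{(1-a^2)^2}\big[(1-a\Omega\sin^2\theta)^2-\Omega^2\Delta_\theta\sin^2\theta\big]$ with $\Omega=a/(r_+^2+a^2)$, whose sign at $\theta=\pi/2$ is that of $r_+^2-a$. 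Moreover, where $K$ is spacelike the energy density $\widetilde{\mathbb{T}}(K,\bar N_t)$ is indefinite in the \emph{derivative} variables (through the terms $\Re(\Phi v\cdot\tilde\partial_r\bar v)$ and $|\Phi v|^2$), so it cannot be bounded below pointwise by a gradient term minus $C|u|^2$; the defect is not lower order. The actual mechanism of the paper is that on $\mathcal{D}'_k$ these indefinite terms become $k\,E_1\,\Im(u\cdot\tilde\partial_r u)$ and $\mathcal{O}((1+|\lambda|)|u|^2)$, where $E_1$ vanishes at $H_0$ and is $\mathcal{O}(r)$ at $Y$, and they are absorbed into $F_1|\lambda|^2|u|^2+F_2|\tilde\partial_r u|^2$ for $|\lambda|$ large. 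Note also that $F_1>0$ everywhere for the whole range $|a|<1$ (it comes from $dt^\star$ being timelike, not from $K$ being timelike), so the single $K$-identity already yields $\Im\lambda\,|\lambda|^2\|u\|^2_{\mathcal{L}^2(X_0)}\lesssim |\lambda|\,\|P(\lambda)u\|\,\|u\|$ and hence \eqref{eq:uppermodes}; your two-identity scheme is predicated on losing the $|\lambda|^2$ mass term beyond Hawking--Reall, which does not happen once the fixed-$k$ absorption is carried out.

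Second, the premise of your "second identity" --- that $P_0,P_1,P_2$ are symmetric on $\mathcal{L}^2(X_0)$ so that $\langle P_j u,u\rangle\in\mathbb{R}$ for $u\in\mathcal{X}(X_0)$ --- is false. Formal self-adjointness only gives symmetry modulo boundary fluxes, and at the horizon $H_0$ the operator $P(\lambda)$ has a non-degenerate first-order term in $\partial_r$ (from the cross term $-2(1-a^2)(\partial_r+f_+\partial_{t^\star})\big((r^2+a^2)\partial_{t^\star}+a\partial_{\phi^\star}\big)$ in \eqref{eq:extendedmetric}), so $\Im\langle P_1u,u\rangle_{\mathcal{L}^2(X_0)}$ contains a term proportional to $(1-a^2)(r_+^2+a^2)\int_{H_0}|u|^2\,d\sigma$; this is precisely the flux exploited in Proposition \ref{prop:realQNF}, and it does not vanish for elements of $\mathcal{X}(X_0)$ (indeed its nonvanishing is what allows complex QNFs at all). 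Your identity $\Im\langle P(\lambda)u,u\rangle=\Im\lambda\,\langle(P_1+2\Re\lambda\,P_2)u,u\rangle$ therefore omits horizon terms that must be tracked (in the current formulation they appear as the flux $\widetilde{\mathbb{T}}(K,K)=|Kv|^2\geq0$, which is why the multiplier $K$ is used and why the flux can simply be dropped). In short: the proof can be closed with the single twisted $K$-energy identity as in Lemma \ref{lem:twistedinvertible}, provided you carry out the fixed-$k$ absorption near both $H_0$ and $Y$ with the correct $r$-weights; as proposed, both the coercivity step and the symmetry step have genuine gaps.
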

Proposition \ref{prop:uppermodes} is stated for functions on the exterior time slice $X_0$ rather than the extended region $X_\delta$. On the other hand, \cite[Proposition 7.1]{gannot:2014:kerr} combined with Proposition \ref{prop:uppermodes} implies that $R(\lambda,k)$ has no poles in the region $\mathbb{R} \setminus [-C_0,C_0] + i(0,\infty)$. Moreover, if $\mathcal{R} : \mathcal{L}^2(X_\delta) \rightarrow \mathcal{L}^2(X_0)$ is the restriction operator, then \eqref{eq:uppermodes} implies
\begin{equation} \label{eq:uppermodes2}
\| \mathcal{R} R(\lambda,k)f \|_{\mathcal{L}^2(X_0)} \leq \frac{C}{|\lambda|\Im \lambda} \| f \|_{\mathcal{L}^2(X_\delta)}
\end{equation}
for each $f \in \mathcal{L}^2(X_\delta) \cap \mathcal{D}_k(X_\delta)$ and $\lambda \in \mathbb{R}\setminus [-C_0,C_0] + i(0,\infty)$. The constants $C_0, C>0$ in \eqref{eq:uppermodes2} a priori depend on $k \in \mathbb{Z}$.

\begin{rem} Proposition \ref{prop:uppermodes} is also valid without restricting to a fixed axial mode provided the Hawking--Reall bound holds (as will be evident from the proof). 
	\end{rem}

It is also important to know that there are no QNFs on the real axis. The proof also exploits axisymmetry of the Kerr--AdS metric, hence the Robin function should satisfy $\Phi\beta=0$ as well.

\begin{prop} \label{prop:realQNF}
		If $u \in \Sob^1(X_\delta) \cap \mathcal{D}'_k(X_\delta)$ solves $P(\lambda)u = 0$ for $\lambda \in \mathbb{R}$ satisfying $(r_+^2+a^2)\lambda \neq ak$, then $u=0$.
\end{prop}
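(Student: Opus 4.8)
The hypothesis $(r_+^2+a^2)\lambda\neq ak$ says exactly that $\sigma:=\lambda-\tfrac{a}{r_+^2+a^2}k$ is nonzero; equivalently $Kv=-i\sigma v$ for $v=e^{-i\lambda t^\star}u$, where $K=T+\tfrac{a}{r_+^2+a^2}\Phi$ generates the Killing horizon. The plan is a boundary‑pairing argument on the exterior slice $X_0$: I want $\Im\langle P(\lambda)u,u\rangle$ (in a weighted $L^2$‑pairing on $X_0$ with respect to which $P(\lambda)$ is formally symmetric, $\lambda,k$ being real) to split as a conformal‑boundary term at $Y$ plus a horizon term over $\{r=r_+\}$; the former vanishes by the self‑adjoint boundary condition, the latter is a fixed nonzero multiple of $\sigma\int_{\{r=r_+\}}|u|^2$, and since the left side is zero this forces $u$ to vanish on $\{r=r_+\}$, after which unique continuation gives $u\equiv0$. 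First I would record the regularity of $u$. At a fixed axial mode $P(\lambda)$ is elliptic on $\{r>r_+\}$ — its reduced principal symbol is $\Delta_r\xi_r^2+\Delta_\theta\xi_\theta^2$ with $\Delta_r,\Delta_\theta>0$ there — so $u\in C^\infty(\{r>r_+\}\times\mathbb S^2)$ by interior elliptic regularity, and near $\mathcal I$ the Bessel‑operator structure from \cite{gannot:bessel} gives the asymptotic expansion of $u$, the traces $\gamma_\pm u$, and the boundary condition $Bu=0$ (part of the natural domain when $\nu\in(0,1)$, with no condition imposed when $\nu\geq1$). At $\{r=r_+\}$ the operator has a regular singular point with indicial roots $0$ and $i\sigma/\varkappa$; as $\sigma\neq0$ the second root is purely imaginary and nonzero, so the corresponding branch behaves like $|r-r_+|^{i\sigma/\varkappa}$, which is not in $H^1$, and therefore the $\Sob^1$ solution $u$ must be the regular branch — in particular $u$ is smooth across $\{r=r_+\}$, so $u|_{X_0}$ is smooth up to both boundary faces. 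This is the horizon‑regularity (outgoing) condition.

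Next I would apply the boundary‑pairing identity. Since $P(\lambda)$ at a fixed axial mode with $\lambda,k$ real is formally self‑adjoint modulo boundary terms with respect to a suitable weighted measure on $X_0$ (as in \cite{gannot:2014:kerr}), pairing $P(\lambda)u=0$ with $u$ and taking imaginary parts yields $0=\mathcal B_Y+\mathcal B_{\{r=r_+\}}$. The term $\mathcal B_Y$ is the imaginary part of the renormalized pairing of $\gamma_\pm u$ at the conformal boundary, and it vanishes: for Dirichlet $\gamma_- u=0$, and for Robin $\gamma_+ u=-\beta\gamma_- u$ with $\beta$ real, so $\Im(\overline{\gamma_- u}\,\gamma_+ u)=0$ in either case. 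The term $\mathcal B_{\{r=r_+\}}$ comes only from the $\partial_r$‑coefficient of $P(\lambda)$ at $r_+$, since the coefficient $\Delta_r$ of $\partial_r^2$ vanishes there; its imaginary part equals $2(1-a^2)\bigl[\lambda(r_+^2+a^2)-ak\bigr]=2(1-a^2)(r_+^2+a^2)\sigma$, so that $\mathcal B_{\{r=r_+\}}=c\,(1-a^2)(r_+^2+a^2)\,\sigma\int_{\{r=r_+\}}|u|^2$ for a fixed nonzero constant $c$ (equivalently, up to a positive factor this is the flux through the Killing horizon of the energy current of $K$). Since $\mathcal B_Y=0$ and $(1-a^2)(r_+^2+a^2)\neq0$, we get $\sigma\int_{\{r=r_+\}}|u|^2=0$, hence $u|_{\{r=r_+\}}=0$ because $\sigma\neq0$.

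Finally I would upgrade $u|_{\{r=r_+\}}=0$ to $u\equiv0$. Being the regular branch at the regular singular surface $\{r=r_+\}$, $u$ has its full Taylor expansion there determined from $u|_{\{r=r_+\}}$ by a recursion whose $n$‑th step divides by the scalar $-n\,\Delta_r'(r_+)\,(n-i\sigma/\varkappa)$, nonzero for every $n\geq1$ because $\sigma\neq0$; so $u|_{\{r=r_+\}}=0$ forces $u\equiv0$ in a neighbourhood of $\{r=r_+\}$ in $X_\delta$. Weak unique continuation for $P(\lambda)$ — elliptic at fixed axial mode, with real‑analytic coefficients — on the connected set $\{r>r_+\}\times\mathbb S^2$ propagates this to $u\equiv0$ on $\{r>r_+\}$, while on $\{r_+-\delta<r<r_+\}$, where the reduced operator is hyperbolic with $r$ a time function, uniqueness for the Cauchy problem from $\{r=r_+-\varepsilon\}$ gives $u\equiv0$ there as well; hence $u\equiv0$ on $X_\delta$.

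The main obstacle is the rigor of the pairing identity $0=\mathcal B_Y+\mathcal B_{\{r=r_+\}}$: with only $\Sob^1$ control near $\mathcal I$, the integration by parts must be performed within the renormalized‑energy framework — the weighted spaces $\Sob^1$ and $\mathcal L^2$, the traces $\gamma_\pm$, and the weighted measure rendering $P(\lambda)$ symmetric — so in practice one would invoke the boundary‑pairing formula already established in \cite{gannot:bessel,gannot:2014:kerr}. The remaining steps are elementary regular‑singular ODE/PDE theory at $\{r=r_+\}$ together with standard elliptic (and hyperbolic) unique continuation.
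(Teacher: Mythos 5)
Your overall strategy is the same as the paper's: smoothness of $u$ up to the horizon, a boundary-pairing (divergence) identity whose horizon flux is proportional to $(r_+^2+a^2)\lambda-ak$ times $\int_{H_0}|u|^2$, hence $u|_{H_0}=0$; then an indicial-root argument at $\{r=r_+\}$ (your roots $0$ and $i\sigma/\varkappa$ agree with the paper's computation for $\rho\widetilde P(\lambda,k)$), followed by unique continuation in $\{r>r_+\}$ and a hyperbolic-uniqueness step across the horizon. Your fixed-$k$ reduction to an operator with principal part $\Delta_r\xi_r^2+\Delta_\theta\xi_\theta^2$ plays the same role as the paper's elliptic modification $\widetilde P(\lambda,k)=\Delta_rD_r^2+\Delta_{\tilde g}+\cdots$, which is how both arguments sidestep the failure of ellipticity in the ergoregion.

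There is, however, a genuine gap at the pivotal step ``$u|_{\{r=r_+\}}=0$ forces $u\equiv 0$ in a neighbourhood of $\{r=r_+\}$.'' The recursion you describe only determines the \emph{formal} Taylor expansion of $u$ at the horizon: since the indicial coefficient $\Delta_r'(r_+)m^2+is(\lambda,k)m$ never vanishes for $m\geq 1$, you obtain vanishing to infinite order at $H_0$ — and this is exactly where the paper's induction stops as well. Passing from infinite-order vanishing to actual vanishing near $\{r=r_+\}$ is not automatic in the smooth category: the surface $\{r=r_+\}$ is characteristic for the operator ($\Delta_r$ vanishes there), so neither standard elliptic unique continuation nor Cauchy–Kovalevskaya applies at that surface, and solutions of the degenerate (Fuchsian/$0$-operator type) equation need not be analytic up to it. The paper closes this gap by invoking a boundary unique continuation theorem for such degenerate operators (Roberts, \cite[Theorem 2 and Example 1]{roberts1980uniqueness}, following the scheme of \cite[Lemma A.1]{warnick:2015:cmp} and \cite[Proposition 3.6]{hintz2015asymptotics}); your proof needs an analogous ingredient, or a genuine Fuchsian unique-solvability theorem, neither of which is supplied by the Taylor recursion itself. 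A secondary, smaller point: your claim that $\Sob^1$ membership alone forces the ``regular branch,'' hence smoothness across the horizon, is the right conclusion but is asserted by an ODE-style heuristic; in this PDE setting it is the content of a radial-point regularity statement (\cite[Proposition 6.2]{gannot:2014:kerr}), valid here because $\lambda\in\mathbb{R}$ lies above the threshold $-\varkappa/2$. With those two inputs made precise, the rest of your argument (the pairing identity within the renormalized framework, real-analytic/elliptic unique continuation in $\{r>r_+\}$, and energy estimates with $r$ as a time function in $\{r<r_+\}$, as in \cite[Proposition 7.1]{gannot:2014:kerr}) matches the paper's proof.
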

\noindent If $k \in \mathbb{Z}$ is fixed, then poles of $R(\lambda,k)$ on the real axis are only possible for one exceptional value of $\lambda$; certainly $R(\lambda,k)$ has no real poles if $|\lambda|$ is sufficiently large.

The crucial final ingredient used to prove Theorem \ref{theo:maintheorem} is an exponential bound on $R(\lambda)$ in a strip away from suitable neighborhoods of the poles of $R(\lambda)$. At this stage it is convenient to introduce the semiclassical rescaling. Given a parameter $h > 0$, set
\[
P_h(z) = h^2 P(h^{-1}z), \quad R_h(z) = P_h(z)^{-1}.
\] 
For the next proposition the Robin function $\beta$ is not required to satisfy $\Phi \beta = 0$; in fact, no serious integrability properties of the metric are used. Fix compact intervals $[a,b] \subseteq (0,\infty),\ [C_-,C_+] \subseteq (-\varkappa/2,\infty)$, and define
\begin{equation} \label{eq:omega}
 \Omega(h) = [a, b ] + i h [C_-, C_+ ].
 \end{equation}
 Let $\{ z_j\}$ enumerate the (discrete) poles of $R_h(z)$ in $\{ \Im z > -\tfrac{1}{2}h\varkappa\}$; for each $h > 0$ and $\delta > 0$, only finitely many disks $B(z_j,\delta)$ intersect $\Omega(h)$.

\begin{prop} \label{prop:exponentialbound}
There exists $A > 0$ such that for any function $0 < S(h) = o(h)$ there holds the estimate
\begin{equation} \label{eq:expbound}
\|R_h(z) \|_{\mathcal{L}^2(X_\delta) \rightarrow \mathcal{L}^2(X_\delta)} < \exp \left(Ah^{-9}\log(1/S(h))\right), 
\end{equation}
provided
\[
z \in \Omega(h) \setminus \bigcup_j B(z_j,S(h))
\]
and $h>0$ is sufficiently small. 
\end{prop}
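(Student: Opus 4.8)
The plan is to run the classical Euclidean‑scattering argument (as in \cite{tang1998quasimodes,petkov2001semi}) that converts an a priori $\exp(Ch^{-N})$ resolvent bound in the strip, together with a polynomial‑in‑$h^{-1}$ bound on the number of poles there, into an exponential bound away from small disks about the poles; both inputs will be extracted from Vasy's framework \cite{vasy:2013} via a factorization of $P_h(z)$. The first step is to record the robust high‑frequency estimate. On the appropriate $h$‑scaled anisotropic Sobolev space --- the estimate being assembled from semiclassical ellipticity at fiber infinity, real‑principal‑type propagation, and the radial‑point estimates at the horizon $\mathcal H_\delta$ and at the conformal boundary $\mathcal I$ (the boundary condition $Bu=0$ entering only through an elliptic statement at $\mathcal I$, which is why $\beta$ need not be axisymmetric) --- one obtains, for $z$ in a fixed neighbourhood of $\overline{\Omega(h)}$, a factorization $P_h(z) = \mathcal E_h(z)(I - \mathcal K_h(z))$ in which $\|\mathcal E_h(z)^{\pm 1}\| \le Ch^{-N_0}$, while $\mathcal K_h(z)$ is holomorphic in $z$ and --- being microlocalized, modulo negligible errors, to a fixed compact neighbourhood of the trapped set, which is why no structure of the metric beyond stationarity is used here --- is trace class (after a harmless regularization if needed) with $\|\mathcal K_h(z)\|_{\mathrm{tr}} \le Ch^{-N}$, $N = N_0 + n$ and $n = 3$ the spatial dimension. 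Consequently $R_h(z) = (I - \mathcal K_h(z))^{-1}\mathcal E_h(z)^{-1}$ is meromorphic there, its poles coincide with multiplicity with the zeros of the holomorphic scalar $D_h(z) := \det(I - \mathcal K_h(z))$, and
\[
\|R_h(z)\| \;\le\; |D_h(z)|^{-1}\,\bigl\|\mathrm{adj}(I - \mathcal K_h(z))\bigr\|\,\|\mathcal E_h(z)^{-1}\| \;\le\; |D_h(z)|^{-1}\exp(Ch^{-N}).
\]
Everything is thereby reduced to a lower bound for the single scalar $D_h$ away from its zeros.

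Next I would bound $D_h$ and count its zeros. The trace‑norm estimate gives $|D_h(z)| \le \exp(Ch^{-N})$ on a slightly enlarged box $\Omega'(h) \supseteq \Omega(h)$ and, more generally, on a disk reaching deep into the upper half‑plane. At a reference point $z_0$ there with $|z_0|$ a sufficiently large negative power of $h$ --- so that $P_h(z_0)$, hence $\mathcal E_h(z_0)$, is invertible with very small‑norm inverse and $\|\mathcal K_h(z_0)\| \le 1/2$ --- one has $|D_h(z_0)| \ge \exp(-Ch^{-N})$. Jensen's formula applied to $D_h$ on a disk centred at $z_0$ of radius large enough to contain $\Omega'(h)$ then bounds the number $n_h$ of poles of $R_h$ in $\Omega'(h)$, counted with multiplicity, by $A_1 h^{-N}$.

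Finally I would apply the minimum‑modulus (Cartan) estimate: from $|D_h| \le \exp(Ch^{-N})$ on the disk, $|D_h(z_0)| \ge \exp(-Ch^{-N})$, and $n_h \le A_1 h^{-N}$, one obtains, for every $z \in \Omega(h)$ with $\mathrm{dist}(z, D_h^{-1}(0)) \ge S(h)$,
\[
\log |D_h(z)| \;\ge\; -C\, n_h\log(1/S(h)) - Ch^{-N} \;\ge\; -A_2\, h^{-N}\log(1/S(h)),
\]
the last step using $\log(1/S(h)) \to \infty$ since $S(h) = o(h)$. Combined with $\|R_h(z)\| \le |D_h(z)|^{-1}\exp(Ch^{-N})$ this gives $\|R_h(z)\| \le \exp(Ah^{-N}\log(1/S(h)))$ for a suitable $A$ whenever $z \in \Omega(h) \setminus \bigcup_j B(z_j, S(h))$ and $h$ is small; a standard pigeonhole choice of the dimensions of $\Omega'(h)$ within an $O(h)$ window keeps $\partial\Omega'(h)$ at distance at least $S(h)$ from every pole, matching the excised disks in the statement with those about poles of $R_h$ in $\Omega'(h)$. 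For the Kerr--AdS geometry the bookkeeping yields $N_0 \le 6$, hence the stated power $h^{-9}$; this is not optimized, and any fixed power would serve equally in Theorem \ref{theo:maintheorem}.

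The main obstacle is the first step --- producing the factorization $P_h(z) = \mathcal E_h(z)(I - \mathcal K_h(z))$ inside Vasy's framework with quantitative control on the power loss $N_0$ and on the trace norm of $\mathcal K_h$. This means assembling the nontrapping high‑frequency estimate for the Bessel operator $P(\lambda)$ while tracking $N_0$ --- combining the interior propagation and radial‑point estimates with the behaviour at the two conformal‑type ends $\mathcal H_\delta$, $\mathcal I$ --- and then isolating the residual obstruction on a fixed compact region of phase space about the trapped set and estimating it in trace norm, equivalently bounding the order of the Fredholm determinant $D_h$. Everything downstream --- Jensen's formula, the Cartan estimate, and the pigeonhole over $\Omega'(h)$ --- is classical complex analysis.
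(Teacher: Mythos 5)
Your outline has the same architecture as the paper's proof (glued parametrix, Fredholm determinant, Jensen count, Cartan minimum modulus), but two steps as written do not go through. First, the trace-class claim: in this three-dimensional setting the glued parametrix error only gains one derivative, so its singular values behave like $h^{-3}j^{-1/3}$ (this is exactly Lemma \ref{lem:singular values}); it lies in a Schatten class $\mathcal{S}_p$ only for $p>3$ and is \emph{not} trace class, and no ``harmless regularization'' produces a trace-norm bound $Ch^{-N}$ without substantive extra work --- a remainder that is genuinely trace class would require a parametrix gaining more than three derivatives uniformly up to the horizon and the conformal boundary, which is not what the radial-point and Bessel-operator estimates furnish. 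The paper instead passes to the fourth power via the identity \eqref{eq:geometric} and applies the Gohberg--Krein bound (Lemma \ref{lem:gohberg}) to $K^4$; the exponent $h^{-9}$ arises precisely from $\sum_j \log\bigl(1+Ch^{-12}j^{-4/3}\bigr)\le Ch^{-9}$, so the step you wave away is the one that produces the quantitative content of the proposition. Relatedly, poles of $R_h$ are only \emph{among} the zeros of the determinant; the converse can fail, and since the excised disks in the statement are centered at poles, you must still control $R_h$ near spurious zeros --- the paper does this with a maximum-principle argument at the end, which your ``coincide with multiplicity'' assertion skips.

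Second, the Jensen/Cartan geometry: you take a reference point with $|z_0|$ a negative power of $h$ and a single disk containing all of $\Omega'(h)$. That disk lies far outside the region where anything is established: the factorization, the bound $|D_h|\le e^{Ch^{-N}}$, and even holomorphy of $D_h$ are only available in an $O(h)$-height neighborhood of $\Omega(h)$ (the operator is Fredholm only for $\Im z>-\tfrac12\varkappa h$, and all semiclassical estimates, including Proposition \ref{prop:nontrapping}, are for $\Re z$ in a fixed compact subset of $(0,\infty)$ with $|\Im z|=O(h)$). Any disk containing $\Omega(h)$ and centered at height $h^{-M}$, or even at height $O(1)$, leaves this region, so neither the upper bound needed for Jensen nor the hypotheses of Cartan's lemma are justified on it. The workable version --- and the one the paper implements, following Tang--Zworski --- is to put $z_0$ at height $Mh$ for a large fixed $M$ (where $\|K(z_0,z_0)\|=O(M^{-1}+h)$ yields the lower bound $|f(z_0,z_0)|\ge e^{-Ch^{-9}}$), to run Jensen and Cartan on disks of radius comparable to $h$, and to cover $\Omega(h)$ by $O(h^{-1})$ such disks. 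With these two repairs --- the $K^4$/Schatten device in place of a trace-class remainder, and the $O(h)$-scale disk geometry in place of one large disk --- your argument becomes the paper's proof.
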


Proposition \ref{prop:exponentialbound} is proved in Section \ref{subsect:singular}. Finally, the quasimode construction of \cite{holzegel2014quasimodes} is reviewed:

\begin{theo} [{\cite[Theorem 1.2]{holzegel2014quasimodes}}] \label{theo:quasimodes}
	There exists a sequence ${\lambda}^\sharp_\ell \in \mathbb{R}$ and 
	\[
	u^\sharp_\ell \in \mathcal{X}(X_\delta) \cap C^\infty(X_\delta) \cap \mathcal{D}'_0(X_\delta) 
	\]
	with the following properties.
	\begin{enumerate} \itemsep6pt
		\item There exists $C > 0$ such that $\ell/C < \lambda^\sharp_\ell < C\ell$.
		\item $\supp u^\sharp_\ell \subseteq (r_1,\infty)$ for some $r_1 > r_+$ independent of $\ell$.
		\item The functions $u_\ell^\sharp$ are normalized exponentially accurate quasimodes in the sense that 
		\[
		\| u_\ell^\sharp \|_{\mathcal L^2(X_\delta)} = 1, \quad 
		\| P\big({\lambda}^\sharp_\ell\big)u_\ell^\sharp \|_{\mathcal{L}^2(X_\delta)} \leq e^{-\ell/D}
		\]
		for some $D > 0$. Furthermore, $\gamma_- u_\ell^\sharp =0$ if $\nu \in (0,1)$.
	\end{enumerate}
	\end{theo}  

As remarked in \cite[Footnote 8]{holzegel2014quasimodes}, the boundary condition $B = \gamma_-$ in \cite[Theorem 1.2]{holzegel2014quasimodes} could also be replaced by a Robin boundary condition of the form $B= \gamma_+ + \beta \gamma_-$, where $\beta \in \mathbb{R}$ is a real constant. The only difference is that the Hardy inequality used in \cite{holzegel2014quasimodes} now holds modulo a boundary term which is negligible in the semiclassical limit.

The proof of Theorem \ref{theo:maintheorem} is finished by the same arguments as in the work of Tang--Zworski \cite{tang1998quasimodes}, with refinements by Stefanov \cite{stefanov1999quasimodes,stefanov2005approximating}.

\begin{proof} [Proof of Theorem \ref{theo:maintheorem}]
Define a semiclassical parameter $h>0$ by $h^{-1} = \lambda^\sharp_\ell$ (so $h\rightarrow 0$ as $\ell\rightarrow\infty$), and then set $u(h) = u_\ell^\sharp$. Suppose that $R_h(z,0)$ was holomorphic in the rectangle 
\begin{equation} \label{eq:rectangle}
[1-2w(h)-S(h),1+2w(h)+S(h)] + i [-2Ah^{-9}\log(1/S(h))S(h),S(h)],
\end{equation}
where $A>0$ is provided by Proposition \ref{prop:exponentialbound}, and $w(h), \,S(h)$ are to be specified. Then
\[
F(z) = \mathcal{R}R_h(z,0) :   \mathcal{L}^2(X_\delta) \cap \mathcal{D}'_0(X_\delta) \rightarrow \mathcal{L}^2(X_0)\cap  \mathcal{D}'_0(X_0)
\]
is certainly holomorphic in the smaller rectangle
\[
\Sigma(h) = [1-2w(h),1+2w(h)] + i [-Ah^{-9}\log(1/S(h))S(h),S(h)],
\]
and satisfies the operator norm estimates
\[
\| F(z) \| < \begin{cases} C/\Im z &\text{ for $z \in {\Sigma}(h) \cap \{ \Im z >0\}$},\\ e^{Ah^{-9}\log(1/S(h))} &\text{ for $z \in {\Sigma}(h)$}, \end{cases}
\]
according to Propositions \ref{prop:uppermodes}, \ref{prop:exponentialbound}. Applying the semiclassical maximum principle \cite[Lemma 1]{stefanov2005approximating}, it follows that
\[
\| F(z) \| < e^3/S(h) \text{ for } z \in [1-w(h),1+w(h)],
\]
provided $w(h),\, S(h)$ are chosen so that $e^{-B/h} \leq S(h) < 1$ for some $B>0$ and
\begin{equation} \label{eq:w(h)}
18A  h^{-9}\log(1/h) \log(1/S(h))S(h) \leq w(h).
\end{equation}
Choose $B>0$ such that  $\|P_h(1)u(h)\|_{\mathcal{L}^2(X_\delta)}  < e^{-B/h}$. Then,
\[
1 = \| u(h) \|_{\mathcal{L}^2(X_\delta)} = \| F(1) P_h(1) u(h) \|_{\mathcal{L}^2(X_0)} < e^3 e^{-B/h}/S(h).
\]
This yields a contradiction by choosing $S(h) = 2e^3e^{-B/h}$ for example. Thus there must exist a pole in the rectangle \eqref{eq:rectangle}, which furthermore must lie in $\{ \Im z < 0 \}$ because of Propositions \ref{prop:uppermodes}, \ref{prop:realQNF}. Defining $w(h)$ by the left hand side of \eqref{eq:w(h)}, it follows that $R_h(z,0)$ has a pole $z(h)$ such that 
\[
| z(h) - 1 | < Ch^{-10}\log(1/h) e^{-B/h}, \quad 0 < -\Im z(h) < Ch^{-10}e^{-B/h}.
\]
Letting $\lambda_\ell  = \lambda_\ell^\sharp \cdot z\big(1/\lambda_\ell^\sharp\big)$ proves Theorem \ref{theo:maintheorem}.
\end{proof}

The rest of the paper is dedicated to proving Propositions \ref{prop:uppermodes}, \ref{prop:realQNF}, and \ref{prop:exponentialbound}.

\section{Exponential bounds on the resolvent} \label{sect:exponentialbound}

The first goal is to prove Proposition \ref{prop:exponentialbound}. For this, an approximate inverse is constructed for $P(\lambda)$ modulo an error of Schatten class. The approximate inverse is built up from local parametrices which invert $P(\lambda)$ near the event horizon and near the conformal boundary.  This is similar to the black-box approach of Sj\"ostrand--Zworski \cite{sjostrand1991complex}.

\subsection{Microlocal analysis of the stationary operator} \label{subsect:microlocal}

In order to construct a local parametrix in a neighborhood of the event horizon using methods of Vasy \cite{vasy:2013}, one needs to understand the Hamilton flow of the semiclassical principal symbol of $P_h(z)$ near its characteristic set. This necessitates a review of terminology common in semiclassical microlocal analysis; for a thorough exposition, see \cite[Appendix E]{zworski:resonances}.

It is convenient to view the (rescaled) flow on a compactified phase space $\OL{T^*}X_\delta$. The fibers of $\OL{T^*} X_\delta$ are obtained by gluing a sphere at infinity to the fibers of $T^*X_\delta$, so $\OL{T^*} X_\delta$ is a disk bundle whose interior is identified with $T^*X_\delta$ --- see \cite{melrose1994spectral,vasy:2013} as well as \cite[Appendix E]{zworski:resonances} for more details. If $| \cdot |$ is a smooth norm on the fibers of $T^*X_\delta$, then a function on $\OL{T^*}X_\delta$ is smooth in a neighborhood of $\partial \OL{T^*}X_\delta$ if it is smooth in the polar coordinates $(x,\rho = |\xi|^{-1}, \,\omega = |\xi|^{-1} \xi)$, where $(x,\xi) \in T^*X_\delta$.

Throughout, $\Im z$ will satisfy $|\Im z|< Ch$ for some $C>0$. In the semiclassical regime one may therefore assume that $z$ is in fact real. The semiclassical principal symbol $p = \sigma_h(P_h(z))$ is given by 
\[
p(x,\xi;z) = - g^{-1}(\xi \cdot dx - z \, dt^\star, \xi \cdot dx - z \, dt^\star),
\]
where $\xi\cdot dx$ is a typical covector on $X_\delta$.

Note that $\left<\xi \right>^{-2} p$ extends smoothly to a function on $\OL{T^*}X_\delta$ --- here $\left< \xi \right> = (1+|\xi|^2)^{1/2}$ with respect to the fixed norm $|\cdot|$ on $T^*X_\delta$. Furthermore, the rescaled Hamilton vector field $\left<\xi\right>^{-1} H_p$ extends to a smooth vector field on $\OL{T^*}X_\delta$ which is tangent to $\partial\OL{T^*}X_\delta$. The Hamilton flow of $p$ will refer to integral curves of $\left<\xi\right>^{-1}H_p$ in this compactified picture.

The characteristic set of $p$ is given by $\Sigma = \{ \left<\xi\right>^{-2}p = 0 \} \subseteq \OL{T^*}X_\delta$. Since $X_\delta \subseteq \mathcal{M}_\delta$ is spacelike, it is a general fact about Lorentzian metrics that for each $z\in \mathbb{R} \setminus 0$ the characteristic set is the union of two disjoint $z$-dependent sets $\Sigma = \Sigma_\pm$, where 
\[
\Sigma_\pm =  \Sigma \cap \{ \pm\left< \xi \right>^{-1} g(\xi\cdot dx - z \, dt^\star, dt^\star) <0 \}
\]
are the backwards and forwards light cones (intersected with a plane where the momentum dual to $dt^\star$ is $-z$) \cite[Section 3.2]{vasy:2013}. Each of these sets is invariant under the Hamilton flow. Finally, let 
\[
\widehat{\Sigma} = \Sigma \cap \partial\OL{T^*}X_\delta, \quad \widehat{\Sigma}_\pm = \Sigma_\pm \cap \partial\OL{T^*}X_\delta.
\] 
If $\partial\OL{T^*}X_\delta$ is identified with the quotient $S^*X_\delta = (T^*X_\delta\setminus 0)/\mathbb{R}_+$ by positive dilations in the fibers, then $\widehat{\Sigma}$ corresponds to the characteristic set of the homogeneous principal symbol of $P(\lambda)$ (as a non-semiclassical differential operator) within $S^*X$. In particular, $(x,\xi) \in \partial\OL{T^*}X_\delta\setminus \widehat{\Sigma}$ is equivalent to standard ellipticity of $P(\lambda)$ in the direction $(x,\xi)$.

If $x \in X_\delta$ and the vector field $T$ is timelike at $x$, then $p$ is elliptic near the fiber $\partial\OL{T^*_x} X_\delta$. Therefore the projection of $\widehat\Sigma$ onto the base space $X_\delta$ is contained within the ergoregion, namely the set where $T$ is not timelike. It is easily checked in Boyer--Lindquist coordinates that the ergoregion is described by the inequality $\Delta_r \leq a^2\Delta_\theta \sin^2\theta$.

Let  $\xi_r, \xi_{\theta}, \xi_{\phi^\star}$ denote the momenta conjugate to $r,\theta,\phi^\star$. Write $\Lambda_\pm \subseteq T^*X_\delta$ for the two components of the conormal bundle to $\{r = r_+\}$, 
\[
\Lambda_\pm = \{ r = r_+, \, \pm \xi_r > 0,\, \xi_\theta = \xi_{\phi^\star} = 0 \},
\]
and let $L_\pm$ denote the images in $\partial\overline{T^*}X_\delta$ of $\Lambda_\pm$ under the canonical projection $T^*X_\delta \setminus 0 \rightarrow \partial\overline{T^*}X_\delta$. These sets are invariant under the Hamilton flow of $p$, and $L_\pm \subseteq \widehat{\Sigma}_\pm$. The flow on $\overline{T^*}X_\delta$ is denoted by
\[
\varphi_t = \exp(t \left<\xi\right>^{-1} H_p )
\]
From the dynamical point of view, $L_+$ is a source and $L_-$ a sink for the Hamilton flow.

\begin{lem} [{\cite[Lemma 4.1]{gannot:2014:kerr}}] \label{lem:sourcesink}
	There exist neighborhoods $U_\pm$ of $L_\pm$ such that if $(x,\xi) \in U_\pm\setminus L_\pm$, then $\varphi_t(x,\xi) \rightarrow L_\pm$ as $\mp t \rightarrow \infty$ and $\varphi_{\pm T}(x,\xi) \notin U_\pm$ for some $T>0$.
\end{lem}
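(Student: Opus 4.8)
The statement is the standard source/sink dichotomy at the radial set of a Lorentzian Hamilton flow over a nondegenerate Killing horizon, and the plan is to establish it by constructing a Lyapunov function near $L_\pm$. One works throughout with the rescaled field $\langle\xi\rangle^{-1}H_p$, which is tangent to $\partial\overline{T^*}X_\delta$ and, restricted to that boundary, is a positive multiple of the Hamilton field of the homogeneous principal symbol $p_2$ of $P(\lambda)$; from \eqref{eq:extendedmetric},
\[
\varrho^2 p_2 = \Delta_r\,\xi_r^2 + \Delta_\theta\,\xi_\theta^2 + 2(1-a^2)a\,\xi_r\xi_{\phi^\star} + \frac{(1-a^2)^2}{\Delta_\theta\sin^2\theta}\,\xi_{\phi^\star}^2 .
\]
Invariance of $L_\pm$ under the flow is read off from this formula: each of $r-r_+$, $\xi_\theta$, $\xi_{\phi^\star}$ has vanishing $H_{p_2}$-derivative along $\Lambda_\pm$. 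The plan is then to produce, on a neighborhood $U_\pm$ of $L_\pm$ in $\overline{T^*}X_\delta$, a nonnegative function $\varphi_\pm$ vanishing exactly on $L_\pm$, with nondegenerate Hessian transverse to $L_\pm$, and satisfying
\[
\pm\,\langle\xi\rangle^{-1}H_p\,\varphi_\pm \;\geq\; c_0\,\varphi_\pm \qquad\text{on } U_\pm
\]
for some $c_0 > 0$. Taking $U_\pm = \{\varphi_\pm < \varepsilon\}$ (a genuine neighborhood of $L_\pm$, by nondegeneracy), Gr\"onwall's inequality applied to $t\mapsto\varphi_\pm(\varphi_t(x,\xi))$ shows that for $(x,\xi)\in U_\pm\setminus L_\pm$ the orbit remains in $U_\pm$ for all $\mp t\geq 0$ with $\varphi_\pm(\varphi_t(x,\xi))$ decaying exponentially, so $\varphi_t(x,\xi)\to\{\varphi_\pm=0\}=L_\pm$; in the opposite time direction $\varphi_\pm$ grows at least exponentially while the orbit stays in $U_\pm$, hence exceeds $\varepsilon$ at some finite $T$, which is the statement $\varphi_{\pm T}(x,\xi)\notin U_\pm$.

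To construct $\varphi_\pm$ I would use coordinates on $\overline{T^*}X_\delta$ near $L_\pm$ given by the boundary defining function $\rho = |\xi|^{-1}$, the base variables $r-r_+,\theta,\phi^\star$, and the homogeneous degree-zero fiber variables $\hat\xi_\theta = \xi_\theta/|\xi|$ and $\hat\xi_{\phi^\star} = \xi_{\phi^\star}/|\xi|$ (the last fiber variable being fixed by $|\xi|=\rho^{-1}$ and the sign $\pm\xi_r>0$), so that $L_\pm = \{\rho = r-r_+ = \hat\xi_\theta = \hat\xi_{\phi^\star} = 0\}$, and compute the linearization of $\langle\xi\rangle^{-1}H_p$ on the normal bundle of $L_\pm$ directly from $p_2$. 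One finds that its eigenvalue in the $\rho$-direction equals $\pm\varrho^{-2}\Delta'_r(r_+)$ at $L_\pm$, comparable to $\pm\varkappa$ with $\varkappa$ the surface gravity \eqref{eq:surfacegravity}; this is exactly where the simple-root hypothesis $\Delta'_r(r_+)>0$ enters. The eigenvalues in the remaining normal directions $r-r_+,\hat\xi_\theta,\hat\xi_{\phi^\star}$ come out as $\pm 2\varrho^{-2}\Delta'_r(r_+)$ and $\pm\varrho^{-2}\Delta'_r(r_+)$ (twice), with a single nonzero off-diagonal term coupling $r-r_+$ to $\hat\xi_{\phi^\star}$ and proportional to $(1-a^2)a$ --- so all normal rates share the sign $\pm$, i.e.\ $L_+$ is a source and $L_-$ a sink. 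A weighted sum of squares $\varphi_\pm = A_0\rho + A_1(r-r_+)^2 + A_2\hat\xi_\theta^2 + A_3\hat\xi_{\phi^\star}^2$, with $A_3$ chosen large relative to $A_1$ so that the cross term is absorbed by Cauchy--Schwarz, then satisfies the required differential inequality by Taylor expansion at $L_\pm$; here $\Delta_\theta>0$ (valid since $|a|<1$) is used to keep $p_2$ and $\widehat\Sigma$ smooth.

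The main obstacle is precisely the verification that \emph{all} normal eigenvalues of the linearized flow at $L_\pm$ have the common sign --- that is, that $L_\pm$ is a genuine radial source/sink and not a saddle, as can occur at other flow-invariant sets. This is a direct but somewhat lengthy computation with the explicit dual metric \eqref{eq:extendedmetric} near $r=r_+$, and it is the one point where the detailed structure of Kerr--AdS, rather than mere stationarity, is used; keeping track of the $\langle\xi\rangle^{-1}$-reparametrization and of the $\theta$-dependence of the rates through $\varrho^2$ is routine and affects neither the signs nor the conclusion. Alternatively, one can sidestep the explicit linearization and invoke the general radial-point analysis of Vasy \cite[Section 2]{vasy:2013}, which establishes exactly this source/sink structure at the event horizon for a class of operators including the stationary Kerr--AdS wave operator.
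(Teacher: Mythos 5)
Your argument is correct and is essentially the standard radial source/sink computation: the paper itself gives no proof here, citing \cite[Lemma 4.1]{gannot:2014:kerr}, which in turn rests on the same linearization-at-$L_\pm$ calculations (following \cite[Section 6.3]{vasy:2013}) that you carry out, with the rates tied to $\Delta_r'(r_+)>0$ and a Lyapunov/Gr\"onwall argument giving the dynamical statement. Your eigenvalue bookkeeping checks out (the conformal factor $\varrho^{-2}$ and the lower-order $z$-dependent terms only add harmless triangular couplings and $\mathcal{O}(\rho^2)$ corrections, as you anticipate), so the proposal matches the cited proof's approach.
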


To analyze the flow more closely near $r=r_+$, observe that $H_p r$ evaluated at a point $(x,\xi)$ is given by
\[
H_p r = -2 \varrho^2 g^{-1}(\xi \cdot dx - z\,dt^\star, dr).
\]
Now $dr$ is null at $r=r_+$, which shows that $H_p r$ cannot vanish over $r=r_+$ at points $(x,\xi) \in \Sigma \cap T^*X_{\delta}$ with $z \in \mathbb{R}\setminus 0$. Indeed, if $H_p r=0$ at such a point, then the two null vectors $\xi\cdot dx - z\, dt^\star$ and $dr$ would be orthogonal and hence collinear; this is impossible since $z\neq 0$. Furthermore,
\[
g^{-1}(dr,dt^\star) = -\varrho^{-2}(1-a^2)(r^2+a^2) < 0
\]
at $r= r_+$, so $dr$ lies in the opposite light cone as $dt^\star$. Since $\Sigma_+$ is the backwards light cone and $\Sigma_-$ is the forward light cone, it follows that
\begin{equation} \label{eq:H_pr}
\pm H_p r < 0 \text{ on } \Sigma_\pm \cap T^*X_\delta \cap \{ r = r_+ \}
\end{equation}
for $z \in \mathbb{R}\setminus 0$.

\begin{lem} \label{lem:nontrapping}
	There exists $\delta > 0$ such that $\varphi_t$ satisfies the following conditions.
	\begin{enumerate} \itemsep6pt
		\item If $(x,\xi) \in \widehat{\Sigma}_\pm \setminus L_\pm$, then $\varphi_t(x,\xi) \rightarrow L_\pm$ as $\mp t \rightarrow \infty$, and $\varphi_{T}(x,\xi) \in \{ r \leq r_+ - \delta\}$
		for some $\pm T>0$.
		\item Suppose that $z \in \mathbb{R}\setminus 0$. If $(x,\xi) \in (\Sigma_\pm \setminus L_\pm) \cap \{|r-r_+| \leq \delta \}$, then either $\varphi_t(x,\xi) \rightarrow L_\pm$ as $\mp t\rightarrow \infty$ and 
		\[\varphi_{T}(x,\xi) \in \{ |r-r_+| \geq \delta\}
		\]
		for some $\pm T>0$, or
		\[
		\varphi_{\pm T_1}(x,\xi) \in \{ r \leq r_+ - \delta \}, \quad \varphi_{\mp T_2}(x,\xi) \in \{r \geq r_++\delta\}
		\] 
		for some $T_1,T_2 > 0$.

	\end{enumerate}
\end{lem}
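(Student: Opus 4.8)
The plan is to analyze the Hamilton flow of $p$ near $\{r = r_+\}$ using the three structural facts already assembled: the source/sink description of $L_\pm$ from Lemma \ref{lem:sourcesink}, the sign of $H_p r$ on $\Sigma_\pm \cap \{r = r_+\}$ from \eqref{eq:H_pr}, and the fact that $T$ is timelike (hence $p$ is elliptic) for $r$ slightly less than $r_+$, so the characteristic set does not reach down to $\{r \leq r_+ - \delta\}$ for an appropriate fixed $\delta > 0$. First I would fix $\delta > 0$ small enough that $\{r_+ - \delta \leq r \leq r_+ + \delta\}$ is contained in the range where \eqref{eq:extendedmetric} defines the Lorentzian metric, that $T$ is timelike on $\{r_+ - \delta \leq r < r_+\}$ (this holds since $\Delta_r(r_+) = 0$ and $\Delta_r > 0$ just inside, versus $a^2 \Delta_\theta \sin^2\theta$ which is bounded; one must check the ergoregion inequality $\Delta_r \leq a^2 \Delta_\theta \sin^2\theta$ can still fail near $r_+$ — actually it holds \emph{at} $r_+$ when $|a|$ is large, so more care is needed: the correct statement is that ellipticity fails only on the ergoregion, which touches $r = r_+$, so one instead shrinks $\delta$ so that $\widehat\Sigma \cap \{r \leq r_+ - \delta\} = \emptyset$ using that outside the ergoregion $p$ is fiber-elliptic), and that \eqref{eq:H_pr} persists with a definite sign on $\Sigma_\pm \cap \{|r - r_+| \leq \delta\}$ by continuity of $H_p r$ away from the fiber boundary — the latter needing a separate argument near $\partial\overline{T^*}X_\delta$.

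For part (1): given $(x,\xi) \in \widehat\Sigma_\pm \setminus L_\pm$, Lemma \ref{lem:sourcesink} already gives a neighborhood $U_\pm$ of $L_\pm$ such that flow lines starting in $U_\pm \setminus L_\pm$ converge to $L_\pm$ in backward/forward time and escape $U_\pm$ in the other direction. To upgrade this to an escape all the way to $\{r \leq r_+ - \delta\}$, I would combine two inputs: away from a neighborhood of $L_\pm$ the function $r$ is strictly monotone decreasing along $\varphi_t$ on $\widehat\Sigma_\pm$ when $r$ is near $r_+$ — this is the boundary-at-fiber-infinity version of \eqref{eq:H_pr}, which must be checked by computing the rescaled vector field $\langle\xi\rangle^{-1} H_p r$ in the polar coordinates $(\rho, \omega)$ at $\rho = 0$ and noting it equals (a positive multiple of) the homogeneous symbol $\sigma(H_{p_0} r)$ restricted to $\widehat\Sigma_\pm$, which has a sign by the same null-vector argument — together with a no-return/no-trapping argument inside $\{r_+ - \delta \leq r \leq r_+ + \delta\}$: since $r$ is monotone on $\widehat\Sigma_\pm$ there, a flow line cannot remain in that slab forever unless it converges to a critical point, and the only critical points of the flow in $\widehat\Sigma$ over $\{r = r_+\}$ are $L_\pm$ (this uses that $H_p r \neq 0$ off $r = r_+$ and that at $r = r_+$ the flow is radial, i.e. $\xi_\theta = \xi_{\phi^\star} = 0$, forced by the structure of $p$). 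Hence every such flow line reaches $r = r_+ - \delta$ in finite time with the correct sign of $t$, and on the characteristic set $\{r \leq r_+ - \delta\}$ is empty, so it must actually have exited the characteristic region — but since we only claim $\varphi_T(x,\xi) \in \{r \leq r_+ - \delta\}$ and flow lines stay in $\widehat\Sigma$ (which is flow-invariant), the resolution is that one only needs to follow until $r$ first decreases to $r_+ - \delta$, which happens before any contradiction with emptiness of $\widehat\Sigma$ there — so I would instead fix $\delta$ so that $\widehat\Sigma$ \emph{does} extend slightly below $r_+$, and the escape claim is to $\{r \leq r_+ - \delta\}$ for that $\delta$; the emptiness of the characteristic set further down is used later, not here.

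For part (2), with $z \in \mathbb{R}\setminus 0$, the flow is now in $\Sigma_\pm \cap T^*X_\delta$ proper (finite fiber), where \eqref{eq:H_pr} gives $\pm H_p r < 0$ exactly on $\{r = r_+\}$, with a definite sign persisting on $\{|r - r_+| \leq \delta\}$ after shrinking $\delta$. The dichotomy is: either the flow line converges to $L_\pm$ (the source/sink), in which case Lemma \ref{lem:sourcesink} plus monotonicity of $r$ forces it to leave $\{|r - r_+| \leq \delta\}$ on the side $r \leq r_+ - \delta$ in the appropriate time direction; or it does not converge to $L_\pm$, in which case, because $r$ is strictly monotone along the flow in the slab and the slab is compact in $r$ while $X_\delta$ is globally hyperbolic-like enough that flow lines don't blow up, the flow line must exit both faces $\{r = r_+ - \delta\}$ and $\{r = r_+ + \delta\}$ — at $r = r_+ - \delta$ in forward/backward time and at $r = r_+ + \delta$ in the opposite sense, with the signs dictated by $\pm H_p r < 0$ (so on $\Sigma_+$, $r$ decreases along the flow, meaning it came from large $r$ and goes to small $r$). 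I would organize this as: (a) establish strict monotonicity of $r$ along $\varphi_t$ on $(\Sigma_\pm \setminus \{\text{nbhd of } L_\pm\}) \cap \{|r - r_+| \leq \delta\}$; (b) classify the $\alpha$- and $\omega$-limit sets within the slab — they are either $L_\pm$ or empty (escape), by monotonicity and compactness; (c) in the escape case read off which faces are hit and in which time direction from the sign in (a); (d) handle the neighborhood of $L_\pm$ directly via Lemma \ref{lem:sourcesink}.

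The main obstacle I anticipate is (b)/(c): carefully ruling out that a flow line in $\Sigma_\pm$ could accumulate at some invariant set other than $L_\pm$ while staying in the slab, or that it could hit a face tangentially rather than transversally. Transversality at $\{r = r_+ \pm \delta\}$ is not automatic — $H_p r$ has a definite sign only at $r = r_+$, and near $r = r_+ \pm \delta$ one must either shrink $\delta$ to keep the sign, or accept a weaker conclusion; the cleanest fix is to choose $\delta$ small enough that $\pm H_p r < 0$ holds throughout $\Sigma_\pm \cap \{|r - r_+| \leq 2\delta\}$ minus a conic neighborhood of $L_\pm$ (away from fiber infinity; near fiber infinity one uses the part (1) analysis and \eqref{eq:H_pr}'s homogeneous version). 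With $\delta$ so chosen, monotonicity becomes global in the slab and every flow line is either asymptotic to $L_\pm$ or transversally crosses the slab, giving both alternatives of the lemma. A secondary technical point is the interface between the finite-fiber analysis (part 2, $z \neq 0$, using \eqref{eq:H_pr}) and the fiber-infinity analysis (part 1, all $z$, using the homogeneous symbol): one must verify the sign of $H_p r$ is consistent across $\partial\overline{T^*}X_\delta$, which follows because $\langle\xi\rangle^{-1} H_p r \to \sigma(H_{p_0} r)/|\xi|$ has the same sign as in \eqref{eq:H_pr} on $\widehat\Sigma_\pm$ by the identical collinearity-of-null-vectors argument, now applied to the principal (dehomogenized) symbol.
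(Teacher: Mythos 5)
Your treatment of part (2) is essentially the paper's own argument: shrink $\delta$ so that $K = \left(\Sigma\setminus(U_+\cup U_-)\right)\cap\{|r-r_+|\le\delta\}$ is a compact subset of $\overline{T^*}X_\delta$, use \eqref{eq:H_pr} together with continuity/compactness to get a uniform bound $\mp H_p r>\varepsilon$ on $K\cap\Sigma_\pm$, and then run the dichotomy ``the flow line enters $U_\pm$ / never enters $U_\pm$'' via Lemma \ref{lem:sourcesink}. Your steps (a)--(d) are exactly this, and your worry about transversality at $\{r=r_+\pm\delta\}$ is unnecessary: the lemma only asks that the flow line reach the sets $\{r\le r_+-\delta\}$ and $\{r\ge r_+ +\delta\}$, which decrease of $r$ at the definite rate $\varepsilon$ already forces.

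The genuine gap is in part (1). That statement is global: \emph{every} point of $\widehat{\Sigma}_\pm\setminus L_\pm$ --- not just points over a thin slab $|r-r_+|\le\delta$ --- must flow to $L_\pm$ in one time direction and into $\{r\le r_+-\delta\}$ in the other. When $a\ne 0$ the projection of $\widehat{\Sigma}$ fills the whole ergoregion $\{\Delta_r\le a^2\Delta_\theta\sin^2\theta\}$, a region whose size is fixed by the black hole parameters and independent of $\delta$. Your argument (monotonicity of $r$ near $r=r_+$ from the collinearity-of-null-covectors computation, plus ``no critical points other than $L_\pm$ over $r=r_+$'') says nothing about flow lines, or segments of flow lines, lying over the rest of the ergoregion: it neither excludes invariant sets (trapping) of the fiber-infinity flow away from $\{r=r_+\}$ nor shows that a flow line moving toward larger $r$ ever returns to the collar, so the claimed convergence to $L_\pm$ and escape past the horizon for \emph{all} such points does not follow. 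Ruling this out requires a global computation using the specific Kerr--AdS structure, which is precisely what the paper invokes by citing \cite[Lemma 4.2]{gannot:2014:kerr}, resting on the calculations of \cite[Section 6.3]{vasy:2013}; the local sign \eqref{eq:H_pr} plus Lemma \ref{lem:sourcesink} is not enough. Relatedly, your preliminary discussion of ellipticity is off: for $r<r_+$ one has $\Delta_r<0\le a^2\Delta_\theta\sin^2\theta$, so the collar $\{r_+-\delta<r<r_+\}$ lies \emph{inside} the ergoregion, $T$ is not timelike there, and $\widehat{\Sigma}$ does extend below the horizon (this is what makes escape into $\{r\le r_+-\delta\}$ meaningful); both the claim ``$\Delta_r>0$ just inside'' and the attempt to choose $\delta$ with $\widehat{\Sigma}\cap\{r\le r_+-\delta\}=\emptyset$ should be discarded.
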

\begin{proof}\begin{inparaenum}[1)]\item The first part is proved in Lemma \cite[Lemma 4.2]{gannot:2014:kerr}, which in turn follows from the same calculations as \cite[Section 6.3]{vasy:2013}. 
		
		\item The sets $U_\pm \supseteq L_\pm$ from Lemma \ref{lem:sourcesink} have the property that 
		\[
		K = \left(\Sigma \setminus (U_+ \cup U_-) \right) \cap \{ |r-r_+|  \leq \delta \}
		\]
		is a compact subset of $\OL{T^*}X_\delta$ for $\delta >0$ sufficiently small. By compactness and \eqref{eq:H_pr}, there exists $\varepsilon > 0$ such that $\mp H_p r > \varepsilon$ on $K \cap \Sigma_\pm$. 
		
		Consider first the case of $\Sigma_+$. If a flow line enters $U_+$, then it must tend to $L_+$ in the backward direction and permanently leave $U_+$ in the forward direction according to Lemma \ref{lem:sourcesink} (if it reentered $U_+$ in the forward direction, then the entire backward flow-out would have been contained in $U_+$). As the flow line leaves $U_+$, either $|r-r_+| \geq \delta$ or otherwise it enters the compact set $K$, at which point $H_p r < -\varepsilon$ has a definite sign. On the other hand, if a flow line never enters $U_+$, then it must escape $K$ at $r=r_+-\delta$ in the forward direction and $r=r_++\delta$ in the backward direction. The same argument applies to $\Sigma_-$ with the directions reversed.  \qedhere
	\end{inparaenum}
\end{proof}	

An idealization of the Hamilton flow near $r=r_+$ illustrating Lemma \ref{lem:nontrapping} is shown in Figure \ref{fig:dynamics} (also see Section \ref{subsect:approximateevent} for more on the notation used in the figure).

\subsection{Localizing the problem} \label{subsect:localizing}

As remarked in the introduction to this section, an approximate inverse for $P(\lambda)$ is contructed from local inverses, both near the event horizon and near infinity. Begin by decomposing 
\[
X_\delta =   \{ r_+ -\delta < r < R \} \cup \{ r > R/2 \},
\]
where $R> r_+$ will be fixed later. Dealing with the boundaries located at $r =R$ and $r = R/2$ is not convenient, and for this reason these cylinders are embedded in larger manifolds without boundaries at $r= R$ and $r=R/2$.

Let $X_+$ denote the cylinder $\{ r_+-\delta < r < 3R\}$ capped off with a 3-disk at $\{r = 3R\}$. The operator $P(\lambda)$, defined near $\{ r_+ -\delta < r < R \}$, must be extended to $X_+$ in a suitable way. To do this, define the auxiliary manifold ${\mathcal{M}}_+ = \mathbb{R}_{t^\star} \times X_+$. The goal is to extend the original metric $g$ to ${\mathcal{M}}_+$, and then consider the stationary Klein--Gordon operator with respect to the extended metric.

The extended metric will be chosen so that $\mathcal{M}_+$ is foliated by spacelike surfaces $\{ t^\star = \mathrm{constant}\}$. Such a metric is uniquely determined by its ADM decomposition:  begin with a smooth function $A > 0$, an arbitrary vector field $W$, and a Riemannian metric $h$, all defined initially on $X_+$. These objects are extended to $\mathcal{M}_+$  by requiring them to be independent of $t^\star$. 

The data $(A,W,h)$ uniquely determine a stationary Lorentzian metric $g_+$ on ${\mathcal{M}}_+$ by defining
\begin{gather} \label{eq:ADM}
g_+(T,T) = A^2 - h(W,W), \quad g_+(T,V) = -h(W,V), \notag\\ g_+(V,V) = -h(V,V),
\end{gather}
where $T = \partial_{t^\star}$ and $V$ is a vector in the tangent bundle of $\{t^\star = \mathrm{constant}\}$ viewed as a subbundle of $T\mathcal{M}_+$.

In standard terminology, $A$ is called the lapse function, and $W$ the shift vector associated to $g_+$. If $N_t$ denotes the unit normal to $\{t^\star = \mathrm{constant}\}$, then $A$ and $W$ can be uniquely recovered from $g_+$ via the formulas
\[
A = g_+(N_t,T) = g_+^{-1}(dt^\star,dt^\star)^{-1/2},\quad W = \partial_{t^\star} - A\cdot N_t.
\]
Similarly, $h$ is recovered as the induced metric on $\{t^\star = \mathrm{constant}\}$. The idea is to extend $g$ to ${\mathcal{M}}_+$ by extending the data $(A,W,h)$ originally defined near $\mathbb{R}_{t^\star} \times \{ r_+ -\delta < r < R \}$.

\begin{lem} \label{lem:adm}
	Let $R> r_+$ be such that $T$ is timelike for $g$ near $\{ r > R/10\}$. There exists a stationary Lorentzian metric $g_+$ on ${\mathcal{M}}_+$ such that 
	\begin{enumerate} \itemsep6pt
		\item $g_+ = g$ near $\mathbb{R}_{t^\star} \times \{ r_+ -\delta < r < R \}$,
		\item $dt^\star$ is everywhere timelike for $g_+$,
		\item $T$ is timelike for $g_+$ near $\mathbb{R}_{t^\star} \times \{ r > R/10 \}$.
	\end{enumerate}
\end{lem}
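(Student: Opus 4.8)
The plan is to construct $g_+$ through its ADM data $(A, W, h)$, extending the data from the physical region past $r = R$ by a gluing argument. The key observation, following the setup immediately preceding the lemma, is that any choice of a positive lapse $A > 0$, a shift vector $W$, and a Riemannian metric $h$ on $X_+$ produces via \eqref{eq:ADM} a stationary Lorentzian metric $g_+$ for which the slices $\{t^\star = \mathrm{constant}\}$ are automatically spacelike, and hence $dt^\star$ is automatically timelike; this gives property (2) for free. So the task reduces to choosing $(A, W, h)$ on $X_+$ so that they agree with the ADM data of $g$ on $\{r_+ - \delta < r < R\}$ (giving (1)), and so that near $\{r > R/10\}$ the metric $g_+$ has $T$ timelike, i.e. $A^2 - h(W,W) > 0$ there (giving (3)).

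\textbf{Step 1: Record the ADM data of $g$ on the physical region.} On $\{r_+ - \delta < r < R\}$ one reads off $A_0 = g^{-1}(dt^\star, dt^\star)^{-1/2}$, $W_0 = T - A_0 N_{t^\star}$, and $h_0 = -g|_{TX_\delta}$ from the explicit form \eqref{eq:extendedmetric}; these are smooth, and $h_0$ is positive definite because the time slices are spacelike. By hypothesis $T$ is timelike for $g$ near $\{r > R/10\}$, which translates into $A_0^2 - h_0(W_0, W_0) > 0$ on that subregion of the overlap.

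\textbf{Step 2: Extend the data past $r = R$.} Fix a cutoff $\chi \in C^\infty(X_+)$ with $\chi = 1$ on $\{r < R\}$ and $\chi = 0$ near the cap $\{r = 3R\}$. Extend $h_0$ to a global Riemannian metric $h$ on $X_+$ by interpolating (in the space of positive-definite symmetric forms, which is convex, so $\chi h_0 + (1-\chi) h_1$ is again Riemannian for any reference metric $h_1$ on $X_+$) — choosing $h = h_0$ on $\{r < R\}$. Extend $A_0$ to a positive function $A$ on $X_+$ with $A = A_0$ on $\{r < R\}$ (trivial, since positivity is an open convex condition). Finally extend $W_0$ to a smooth vector field $W$ on $X_+$ with $W = W_0$ on $\{r < R\}$ — and crucially, arrange that $W$ is \emph{small} (say, even vanishing) on $\{r > R/2\}$ outside the overlap region, or more precisely that $A^2 - h(W,W) > 0$ holds throughout $\{r > R/10\}$. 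This last point is where one must be slightly careful: on the overlap $\{R/10 < r < R\}$ the inequality already holds by Step 1, and on $\{r \geq R\}$ one has freedom to make $W$ as small as needed relative to $A$, so interpolating $W$ down toward zero as $r$ increases past $R/2$ secures the inequality on all of $\{r > R/10\}$.

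\textbf{Step 3: Assemble $g_+$ and verify.} Define $g_+$ by \eqref{eq:ADM} from the data $(A, W, h)$. Property (1) holds because the data agree with those of $g$ on $\{r_+ - \delta < r < R\}$ and the ADM reconstruction is an inverse to the ADM decomposition. Property (2) holds because $g_+(N_{t^\star}, N_{t^\star}) = -1$ with $N_{t^\star}$ spacelike is built into \eqref{eq:ADM}, equivalently $g_+^{-1}(dt^\star, dt^\star) = A^{-2} > 0$. Property (3) holds because $g_+(T,T) = A^2 - h(W,W) > 0$ on $\{r > R/10\}$ by the arrangement in Step 2.

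\textbf{The main obstacle} is purely the bookkeeping in Step 2: one must extend three objects simultaneously and compatibly, with the only genuine constraint being the sign of $A^2 - h(W,W)$ near $\{r > R/10\}$. Since $A$ and $h$ can be extended with total freedom (positivity of $A$ and positive-definiteness of $h$ are open convex conditions, stable under interpolation), the entire difficulty concentrates on controlling $W$: one wants $W = W_0$ where $g = g_+$ is required, while keeping $h(W,W) < A^2$ where $T$ must stay timelike. Because these two demands are imposed on regions whose overlap $\{R/10 < r < R\}$ already satisfies the sign condition, a single interpolating cutoff resolves the conflict, and no further analytic input is needed — everything else is a formal consequence of the ADM formalism set up before the lemma.
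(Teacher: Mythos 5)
Your construction is the same as the paper's: extend the ADM data $(A,W,h)$ of $g$ to $X_+$, keeping the lapse positive and the slice metric Riemannian (which gives item (2) for free), and kill the shift vector by a cutoff so that $g_+(T,T)=A^2-h(W,W)>0$ where needed. The one place your write-up is loose is the transition annulus just beyond $r=R$: condition (1) pins $W=W_0$ on a neighborhood of $\{r<R\}$, so you cannot "make $W$ as small as needed" there, and since your extensions of $A$ and $h$ are allowed to deviate from $A_0,h_0$ already at $r=R$ (and your phrase "interpolating $W$ down toward zero past $R/2$" even conflicts with $W=W_0$ on $\{r<R\}$), the sign of $A^2-h(W,W)$ on that annulus is not controlled as written. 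The paper coordinates the supports to close this: it keeps $\widehat A=A$ and $\widehat h=h$ on the larger region $\{r<2R\}$ and takes $\widehat W=\chi W$ with $\chi$ supported in $\{r<2R\}$, $\chi=1$ near $\{r<R\}$, so that wherever $\chi>0$ one has $g_+(T,T)=A^2-\chi^2 h(W,W)\geq A^2-h(W,W)>0$ by the hypothesis that $T$ is timelike for $g$ on $\{r>R/10\}$, while where $\chi=0$ one simply has $g_+(T,T)=\widehat A^{\,2}>0$; adopting this arrangement makes your argument complete.
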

\begin{proof}
	Let $(A,W,h)$ denote the ADM data for the metric $g$, originally defined near $\mathbb{R}_{t^\star} \times \{ r_+ -\delta < r < R \}$. Fix any function $\widehat{A} > 0$ and Riemannian metric $\widehat{h}$ on ${X}_+$ such that
	\[
	\widehat{A} = A,\quad \widehat{h} = h
	\]
	near $\{r_+-\delta < r < 2R\}$. Choose a cutoff $\chi = \chi(r)$ with values $0\leq \chi \leq 1$, such that $\supp \chi \subseteq \{ r_+-\delta \leq r < 2R\}$ and $\chi = 1$ near $\{r_+-\delta \leq r < R\}$. Then, define $\widehat{W} = \chi W$. 
	
	The metric $g_+$ determined by $(\widehat{A},\widehat{W},\widehat{h})$ satisfies the requirements of the lemma; the only part which is not immediate is the last item. If $\chi > 0$, then $A = \widehat{A}$ and $h = \widehat{h}$, so 
	\[
	g_+(T,T) = A^2 - \chi^2 h(W,W) \geq A^2 - h(W,W) > 0.
	\]
	On the other hand, if $\chi = 0$, then $\widehat{W} = 0$ and so ${g}_+(T,T) = \widehat{A}^{\,2} > 0$.
 \end{proof}

Given Lemma \ref{lem:adm}, define the stationary operator
\[
P_+(\lambda) = e^{i\lambda t^\star}\varrho^2 \left(\Box_{g_+} + \nu^2-9/4\right)e^{-i\lambda t^\star}
\]	
on ${X}_+$, and similarly for the semiclassical version $P_{h,+}(z)$. The same argument can also be applied to the infinite part $\{ r > R/2\}$: in that case $X_\infty$ is obtained by capping $\{r \geq R/4\}$ at $r = R/4$. Then $g$ is extended to a stationary metric $g_\infty$ on ${\mathcal{M}}_\infty = \mathbb{R}_{t^\star} \times {X}_\infty$, and the analogue Lemma \ref{lem:adm} holds in such a way that $T$ is everywhere timelike for the extended metric. Thus it is possible to define an extended operator $P_\infty(\lambda)$ and $P_{h,\infty}(z)$.

\subsection{An approximate inverse near the event horizon} \label{subsect:approximateevent}
The manifold $X_+$ can be compactified by gluing in the boundary
\[
H_\delta = \{r = r_+ - \delta\}.
\]
If $L^2(X_+)$ is defined with respect to any positive density on the compact manifold with boundary $X_+ \cup H_\delta$, let ${H}^1(X_+)$ denote the space of distributions $u\in L^2(X_+)$ such that $du \in L^2(X_+)$ (with respect to any smooth norm on covectors). These distributions are extendible across $H_\delta$ in the sense of \cite[Appendix B.2]{hormanderIII:1985}, where this space is denoted $\OL{H}^1(X_+)$. In analogy with \eqref{eq:Xspace}, define
\[
\mathcal{X}_+ = \{ u \in {H}^1(X_+): P_+(0)u \in L^2(X_+)\},
\]
equipped with the norm $\|u\|_{\mathcal{X}_+} = \| u \|_{H^1(X_+)} + \| P_+(0)u\|_{L^2(X_+)}$.

\begin{prop} \label{prop:Rfredholm}
	The operator $P_+(\lambda) : \mathcal{X}_+ \rightarrow L^2(X_+)$ is Fredholm of index zero in the half-plane $\{\Im \lambda > -\tfrac{1}{2}\kappa\}$. Furthermore,\[
	R_+(\lambda):= P_+(\lambda)^{-1} : L^2(X_+) \rightarrow \mathcal{X}_+
	\]
	is a meromorphic family of operators in $\{\Im \lambda > -\tfrac{1}{2}\varkappa\}$ which is holomorphic in any angular sector of the upper half-plane, provided $|\lambda|$ is sufficiently large.
\end{prop}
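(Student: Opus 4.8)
The plan is to prove Proposition~\ref{prop:Rfredholm} by the microlocal Fredholm method of Vasy~\cite{vasy:2013}, in the form already used to prove Theorem~\ref{theo:QNF} in~\cite{gannot:2014:kerr}; in fact the set-up here is somewhat simpler, since $X_+$ has been capped off at $\{r = 3R\}$ and so there is no conformal boundary and no boundary condition to impose. The starting observation is that by Lemma~\ref{lem:adm} the field $T$ is timelike for $g_+$ on $\{r > R/10\}$, and that timelikeness of $T$ at a point forces $P_+(\lambda)$ to be elliptic over the corresponding cotangent fibre; hence the characteristic set $\widehat\Sigma$ of the homogeneous principal symbol of $P_+(\lambda)$ projects into the ergoregion near $\{r = r_+\}$, and in particular $P_+(\lambda)$ is elliptic near the cap $\{r = 3R\}$. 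On the compact manifold with boundary $X_+ \cup H_\delta$ the only non-elliptic features are therefore the radial sets $L_\pm$ over $\{r = r_+\}$, the null-bicharacteristic flow between them, and the artificial boundary $H_\delta$; since $g_+ = g$ near $\{r_+ - \delta < r < R\}$, the dynamical input for all three is provided verbatim by Lemmas~\ref{lem:sourcesink} and~\ref{lem:nontrapping}.

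The Fredholm property follows from the a priori estimate
\[
\| u \|_{\mathcal{X}_+} \le C\big( \| P_+(\lambda) u \|_{L^2(X_+)} + \| u \|_{H^{-N}(X_+)} \big), \qquad u \in \mathcal{X}_+,
\]
together with the analogous estimate for the formal adjoint $P_+(\lambda)^*$ acting on supported distributions with the reversed flow. Both are assembled microlocally from: (i) elliptic regularity off $\widehat\Sigma$, which in particular gives $H^2$ control near the cap and away from the ergoregion; (ii) the radial-point estimates at the source $L_+$ and the sink $L_-$, each of which upgrades an a priori lower-order bound to $H^1$-control near the radial set provided the Sobolev order $s=1$ lies on the correct side of the relevant threshold --- these thresholds depend linearly on $\Im\lambda$ through the factor $\varkappa^{-1}$, and it is exactly their compatibility with the order $s=1$ that confines the statement to $\{\Im\lambda > -\tfrac{1}{2}\varkappa\}$; (iii) real-principal-type propagation of singularities along the rescaled Hamilton flow $\varphi_t$; and (iv) the fact that $H_\delta = \{r = r_+ - \delta\}$ is spacelike, hence non-characteristic, for $g_+$, so that in the extendible-distribution calculus of~\cite[Appendix~B.2]{hormanderIII:1985} it poses no obstruction, while Lemma~\ref{lem:nontrapping} guarantees that every bicharacteristic through $\widehat\Sigma$ reaches $\{r \le r_+ - \delta\}$ --- so the control produced by the radial-point estimates propagates to all of $\widehat\Sigma$ with no loss beyond the $H^{-N}$ remainder, and $H_\delta$ itself never needs to be controlled. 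The compact inclusion $H^1(X_+) \hookrightarrow H^{-N}(X_+)$ then yields that $P_+(\lambda) : \mathcal{X}_+ \to L^2(X_+)$ is Fredholm, and analytic Fredholm theory promotes this to the meromorphy of $R_+(\lambda) = P_+(\lambda)^{-1}$ in $\{\Im\lambda > -\tfrac{1}{2}\varkappa\}$.

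For the index and for holomorphy in angular sectors at high frequency, I would pass to the semiclassical rescaling $P_{h,+}(z) = h^2 P_+(h^{-1}z)$. In an angular sector of the upper half-plane one has $\Im z \ge c > 0$, so the imaginary part of the semiclassical symbol has a dissipative sign on the characteristic set of its real part; together with the semiclassical non-trapping of the second part of Lemma~\ref{lem:nontrapping} --- every semiclassical bicharacteristic over $\{|r - r_+| \le \delta\}$ escapes to $\{|r - r_+| \ge \delta\}$ --- a positive-commutator argument gives the loss-free bound $\| u \|_{L^2(X_+)} \le C h^{-1} \| P_{h,+}(z) u \|_{L^2(X_+)}$ uniformly for $h$ small, and the same for the adjoint. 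Hence $P_+(\lambda)$ is invertible for $|\lambda|$ large in any such sector, so $R_+(\lambda)$ is holomorphic there; and since $\{\Im\lambda > -\tfrac{1}{2}\varkappa\}$ is connected, the index of $P_+(\lambda)$ is constant and equal to zero.

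I expect the main obstacle to be the radial-point analysis at $L_\pm$: verifying that the $H^1$-based domain $\mathcal{X}_+$ sits on the correct side of the threshold at both the source and the sink, and that this remains consistent with the dual estimate, is the computation --- carried out in~\cite{vasy:2013,gannot:2014:kerr} --- that both pins the validity of the result to $\{\Im\lambda > -\tfrac{1}{2}\varkappa\}$ and produces the surface gravity. A secondary point is checking that the extendible-distribution framework at the artificial boundary $H_\delta$ is compatible with propagating the radial-point estimates, which is precisely where the non-trapping statement of Lemma~\ref{lem:nontrapping} is used; by contrast, the genuinely new input here --- that capping $X_+$ at $\{r = 3R\}$ and cutting the shift vector in Lemma~\ref{lem:adm} only create elliptic regions for $P_+(\lambda)$ --- is immediate.
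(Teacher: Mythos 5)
Your Fredholm argument is essentially the paper's: the paper reduces to Vasy's framework by observing that $p_+ = p$ near the characteristic set at fiber infinity (whose projection lies in the ergoregion, inside $\{r< R/10\}$ where $g_+=g$), so the radial-point/propagation hypotheses of \cite[Section 2.6]{vasy:2013}, already checked in \cite[Section 4]{gannot:2014:kerr}, apply verbatim; your accounting of the $s=1$ threshold at $L_\pm$ (the source of $\Im\lambda>-\varkappa/2$), the spacelike cap $H_\delta$, and index zero by connectedness is consistent with this. Where you diverge is the high-frequency sector statement, and there your stated mechanism has a flaw. The paper's route is simpler: since Lemma \ref{lem:adm} makes $dt^\star$ timelike for $g_+$ \emph{everywhere}, the full symbol $p_+$ is nonvanishing on compact subsets of $T^*X_+$ once $\Im z\ge c>0$ --- if $\Im p_+ = 2\,\Im z\, g_+^{-1}(\xi\cdot dx-\Re z\,dt^\star,dt^\star)$ vanishes, then $\xi\cdot dx-\Re z\,dt^\star$ is orthogonal to a timelike covector, hence spacelike, forcing $\Re p_+\geq (\Im z)^2 g_+^{-1}(dt^\star,dt^\star)>0$ --- so $P_{h,+}(z)$ is semiclassically elliptic in the interior and the only remaining work is the semiclassical radial-point analysis at fiber infinity, as in \cite[Sections 3.2, 7]{vasy:2013}. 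By contrast, your claim that $\Im p_+$ has a single ``dissipative'' sign on $\{\Re p_+=0\}$ is false (the sign is opposite on the two components corresponding to $\Sigma_\pm$), and Lemma \ref{lem:nontrapping}(2) concerns the semiclassical characteristic set for \emph{real} $z$ --- it is the input for the absorbed operator of Proposition \ref{prop:nontrapping}, not for $\Im z\ge c$, where that characteristic set is empty. A per-component positive-commutator argument could be repaired, but it buys nothing over the ellipticity observation, which is exactly why condition (2) of Lemma \ref{lem:adm} was built into the extension; note also that your index-zero conclusion rests on this sector invertibility, so the fix is needed, though immediate.
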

\begin{proof}
	Let $p_+$ denote the semiclassical principal symbol of $P_{h,+}(z)$. Note that $p = p_+$ near the characteristic set of $p_+$ at fiber infinity (whose projection to $X_+$ is contained in a neighborhood of $\{ r < R/10\}$). Therefore the hypotheses at fiber infinity of \cite[Section 2.6]{vasy:2013} also hold for $p_+$, since they were verified for $p$ in \cite[Section 4]{gannot:2014:kerr}. That suffices to show that $P_+(0)$ is Fredholm on $\mathcal{X}_+$. 
	
	The invertibility statement follows from the ellipticity of $P_{h,+}(z)$ on compact subsets of phase space for $\Im z > 0$, which in turn is a corollary of the fact that $dt^\star$ is timelike for $g_+$ --- see \cite[Sections 3.2, 7]{vasy:2013}.
\end{proof}

A more refined invertibility statement for $R_+(\lambda)$ will be proved at the end of the section. Before doing this, one must also consider a nontrapping model where $P_{h,+}(z)$ is modified by a complex absorbing operator. 

The relevant class of semiclassical pseudodifferential operators $\Psi^{\mathrm{comp}}_{h}(X_+)$ are compactly supported and compactly microlocalized (the superscript $\mathrm{comp}$ refers to compact microlocalization). As usual, compact support means that the Schwartz kernel of any such operator is supported in a compact subset of $X_+ \times X_+$, while compactly microlocalized means that the semiclassical wavefront set of any such operator is a compact subset of the interior $T^*X_+ \subset \OL{T^*} X_+$. For more about semiclassical microlocal analysis on a noncompact manifold, see \cite[Appendix E]{zworski:resonances}. 

For the purpose of this paper it suffices to observe that any compactly supported function $a \in C_c^\infty(T^*X_+)$ can be quantized to obtain a compactly supported operator $A = \mathrm{Op}_h(a) \in \Psi^{\mathrm{comp}}_{h}(X_+)$ such that $\WF_h(A) \subseteq \supp a$.

In the next lemma, $\Sigma_\pm$ will denote the two components of the characteristic set $\Sigma$ of $p_+$ as in Section \ref{subsect:microlocal} (replacing $p$ with $p_+$). This separation is possible since $dt^\star$ is timelike for $g_+$. Furthermore $\varphi_t$ will denote the Hamilton flow of $p_+$.

\begin{lem} \label{lem:Qpm}
	Fix $[a,b] \subseteq (0,\infty)$. Then there exist compactly supported $z$-independent $Q_\pm \in \Psi_{h}^{\mathrm{comp}}(X_+)$ such that if $q_\pm = \sigma_h(Q_\pm)$, then 
	\begin{enumerate} \itemsep6pt
		\item $\mathrm{WF}_h(Q_+) \cap \mathrm{WF}_h(Q_-) = \emptyset$ and $\pm q_\pm \geq 0$,
		\item If $(x,\xi) \in \Sigma_\pm\setminus L_\pm$,  then either $\varphi_t \rightarrow L_\pm$ as $\mp t \rightarrow \infty$, or $\varphi_T(x,\xi) \in \{q_\pm \neq 0 \}$ for some $\mp T\geq0$, uniformly in $z \in [a,b]$.
	\end{enumerate} 
\end{lem}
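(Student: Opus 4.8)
The plan is to construct the complex absorbing operators $Q_\pm$ by quantizing cutoffs adapted to the dynamics of $p_+$ described in Lemma \ref{lem:nontrapping}. First I would observe that, because $dt^\star$ is timelike for $g_+$, the characteristic set splits as $\Sigma = \Sigma_+ \sqcup \Sigma_-$ with $L_\pm \subseteq \widehat{\Sigma}_\pm$ a source/sink pair at fiber infinity, and the only trapped set (in the relevant compact region) consists of flow lines which do not escape to $\{ r \leq r_+ - \delta\}$ or $\{ r \geq r_+ + \delta\}$; by Lemma \ref{lem:nontrapping}(2), every such flow line either limits to $L_\pm$ or escapes both spatial ends. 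I would fix a slightly shrunk energy window and a spatial region, say $\{ |r - r_+| \leq \delta\} \cap \Sigma_\pm$ minus small conic neighborhoods $U_\pm$ of $L_\pm$, which is compact in $\OL{T^*}X_+$; on this compact set the non-escaping flow lines (away from the radial points) form a closed set disjoint from $L_\pm$, and I would choose an open set $\mathcal{O}_\pm$ slightly inside $\{r \leq r_+ - \delta\}$ (resp. adjoining the cap of $X_+$) that every flow line in $\Sigma_\pm \setminus L_\pm$ must enter in the appropriate time direction.

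The key steps, in order: (i) use compactness together with Lemma \ref{lem:nontrapping}(2) and the escape property $\varphi_{\pm T}(x,\xi) \notin U_\pm$ from Lemma \ref{lem:sourcesink} to produce, for each point of $\Sigma_\pm \setminus (L_\pm \cup \{$limits to $L_\pm\})$, a finite backward (resp. forward) escape time into a fixed open set $\mathcal{O}_\pm \Subset T^*X_+$ supported where $\pm r$ is suitably large/small; (ii) by a covering argument extract a single open set $V_\pm$ (a finite union of flowouts) such that \emph{every} $(x,\xi) \in \Sigma_\pm \setminus L_\pm$ that does not converge to $L_\pm$ satisfies $\varphi_T(x,\xi) \in V_\pm$ for some $\mp T \geq 0$; (iii) choose $q_\pm \in C_c^\infty(T^*X_+)$ real-valued with $\pm q_\pm \geq 0$, $\{ q_\pm \neq 0\} \supseteq V_\pm$, and with $\WF_h$-supports of the eventual quantizations disjoint — this is arranged by taking $V_+$ and $V_-$ disjoint, which is possible since $\Sigma_+ \cap \Sigma_- = \emptyset$ and one can take $\mathcal{O}_\pm$ in disjoint regions (e.g.\ $\mathcal{O}_+$ near $\{r = r_+ - \delta\}$ for the forward flow and $\mathcal{O}_-$ near the cap, with the roles of $\pm$ as dictated by \eqref{eq:H_pr}); (iv) set $Q_\pm = \mathrm{Op}_h(q_\pm) \in \Psi_h^{\mathrm{comp}}(X_+)$, which are $z$-independent since the $q_\pm$ were chosen independently of $z$, and $\WF_h(Q_\pm) \subseteq \supp q_\pm$ gives the disjointness in (1). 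The uniformity in $z \in [a,b]$ in (2) comes from the fact that all the dynamical statements in Lemmas \ref{lem:sourcesink} and \ref{lem:nontrapping} hold uniformly for $z$ in a compact subset of $(0,\infty)$ (the flow depends smoothly on $z$ and the radial points $L_\pm$ are $z$-independent), so the escape times can be taken uniform after possibly enlarging $V_\pm$ slightly.

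The main obstacle I anticipate is step (ii)–(iii): making the escape sets $V_+$ and $V_-$ genuinely disjoint while still ensuring that the correct flow direction ($\mp T$ for $\Sigma_\pm$) carries every non-convergent trajectory into them. The sign constraint $\pm q_\pm \geq 0$ together with the requirement that the absorption happen where \eqref{eq:H_pr} forces escape means one must be careful about \emph{which} spatial end ($r \leq r_+ - \delta$ versus $r \geq r_+ + \delta$) each of $Q_\pm$ is supported near; the asymmetry in Lemma \ref{lem:nontrapping}(2), where a trajectory may escape through $\{r \leq r_+ - \delta\}$ in one time direction and $\{r \geq r_+ + \delta\}$ in the other, means $q_\pm$ may need support components near \emph{both} ends, and one then checks the $\WF_h$-disjointness by keeping the $\Sigma_+$ and $\Sigma_-$ flowouts separated in the fiber variable (they lie in disjoint cones). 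The remaining bookkeeping — compactness of $K$, quantization, disjointness of wavefront sets — is routine once the dynamical picture of Figure \ref{fig:dynamics} is set up, and closely parallels the construction of complex absorbing potentials in \cite[Section 6.3]{vasy:2013} and the black-box scheme of \cite{sjostrand1991complex}.
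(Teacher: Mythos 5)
Your overall strategy (escape plus compactness, then quantizing sign-definite cutoffs, with $z$-uniformity from compactness of $[a,b]$) is the right one, but the concrete placement of the absorbers in steps (i)--(iii) would not prove the lemma. The time directions in condition (2) are fixed: for $\Sigma_+$ the set $\{q_+\neq 0\}$ must be reached in \emph{backward} time, for $\Sigma_-$ in \emph{forward} time. By \eqref{eq:H_pr} (and its persistence on the collar $\{|r-r_+|\leq\delta\}$ away from $U_\pm$, as in the proof of Lemma \ref{lem:nontrapping}), one has $H_pr<0$ on $\Sigma_+$ and $H_pr>0$ on $\Sigma_-$ there, so in the \emph{required} direction trajectories of both components move toward $\{r\geq r_++\delta\}$; they reach $\{r\leq r_+-\delta\}$ only in the opposite direction. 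Hence an absorber supported near $\{r=r_+-\delta\}$ is never met in the admissible direction by non-converging trajectories (except those already sitting in it), and separating $Q_+$ and $Q_-$ spatially at the two ends cannot satisfy (2) for either assignment of signs. The ``near the cap'' half has a second problem: Lemma \ref{lem:nontrapping} gives no control whatsoever on the flow in $\{r\geq r_++\delta\}$, where trapping at finite frequencies is exactly what one expects (this is the reason complex absorption is introduced at all), so there is no reason a non-converging trajectory ever enters a neighborhood of $\{r=3R\}$. Your opening claim that every flow line either limits to $L_\pm$ or ``escapes both spatial ends'' only applies to the collar $\{|r-r_+|\leq\delta\}$, and the covering argument in (ii) silently presupposes the false global version of it.

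The paper's proof fixes both points at once: it extracts from Lemma \ref{lem:nontrapping} compact sets $K_\pm\subseteq\Sigma_\pm\cap\{r\geq r_++\delta\}\cap T^*X_+$ met by every non-converging trajectory in the required direction, and quantizes $q_\pm$ with $\pm q_\pm>0$ near $K_\pm$ and compact support in $T^*X_+$. The key observations are that the statement allows $T=0$, so points of $\Sigma_\pm$ already lying over $\{r\geq r_++\delta\}$ at bounded frequency satisfy the alternative trivially once $K_\pm$ is taken large enough; points at sufficiently high finite frequency converge to $L_\pm$ in the required direction (part (1) of Lemma \ref{lem:nontrapping} together with Lemma \ref{lem:sourcesink}); and the near-horizon points are pushed into $\{r\geq r_++\delta\}$ in the required direction by part (2). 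Disjointness of $\mathrm{WF}_h(Q_+)$ and $\mathrm{WF}_h(Q_-)$ then comes not from spatial separation but from $\Sigma_+\cap\Sigma_-=\emptyset$: $K_+$ and $K_-$ are disjoint compact sets, so $\supp q_+$ and $\supp q_-$ can be chosen disjoint, uniformly for $z\in[a,b]$. Your closing remark that $q_\pm$ ``may need support components near both ends'' with disjointness enforced in the fiber gestures toward this resolution, but it is not carried out, and as written your primary construction places the absorption on the wrong side of the horizon region and relies on trajectories reaching the cap; with $K_\pm$ chosen as above, the remaining steps of your outline (quantization, wavefront containment, $z$-uniformity) do go through.
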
	
\begin{proof}
	According to Lemma \ref{lem:nontrapping}, there exist compact sets 
	\[
	K_\pm \subseteq \Sigma_\pm \cap \{ r \geq r_+ + \delta \} \cap T^*X_+
	\]
	such that if $(x,\xi) \in \Sigma_\pm \setminus L_\pm$ then either $\varphi_t(x,\xi) \rightarrow L_\pm$ as $\mp t\rightarrow\infty$, or $\varphi_T(x,\xi) \in K_\pm$ for some $\mp T \geq 0$. It then suffices to quantize functions $\pm q_\pm \geq 0$ which satisfy $\pm q_\pm > 0$ near $K_\pm$ and have compact support within $ T^*X_+$. This may be done uniformly for $z$ in any compact subset of $\mathbb{R} \setminus 0$.
\end{proof}

{\setlength\textfloatsep{0pt}
	\begin{figure}[t] \includegraphics[width=13cm]{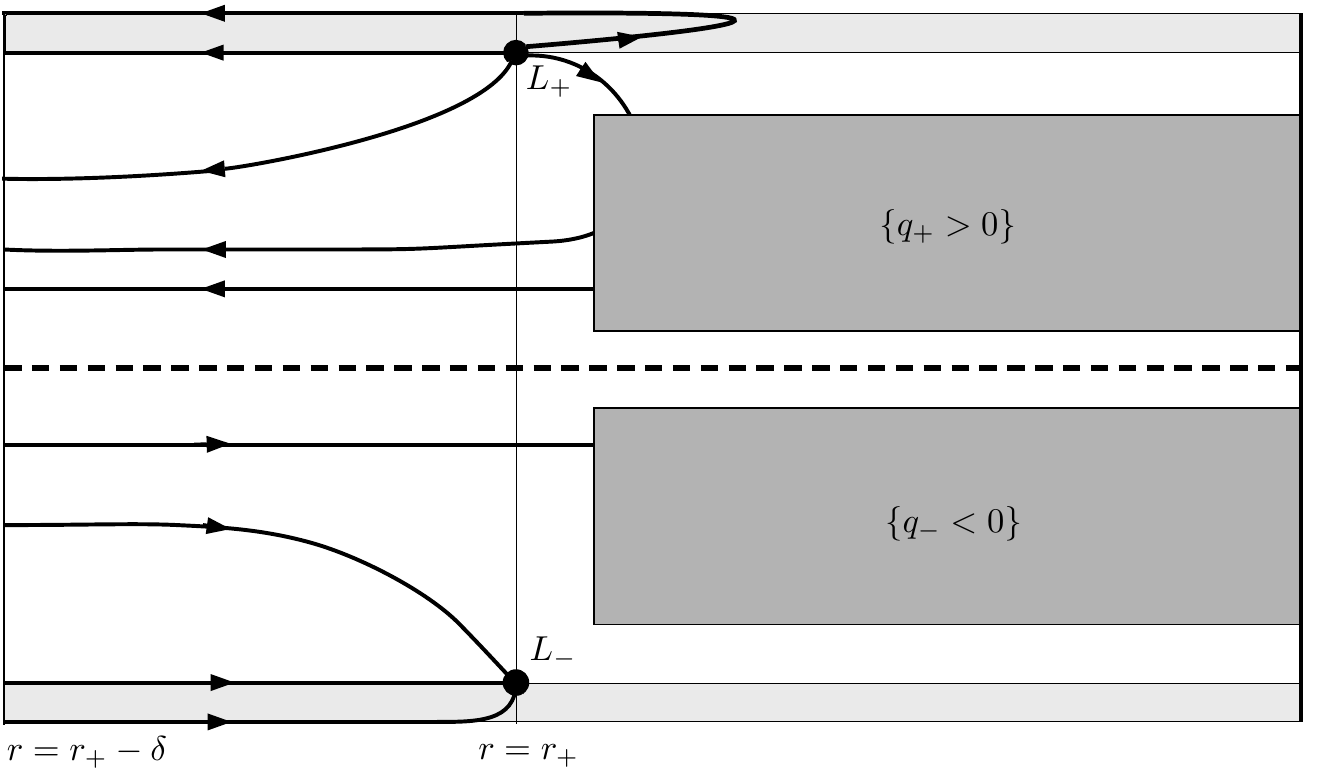} \label{fig:dynamics}
		\centering
		\caption{A schematic plot of the $\left<\xi\right>^{-1}H_{p_+}$ flow on $\overline{T^*}X_+$. The light grey shaded regions represent the boundary $\partial\overline{T^*}X_+$, and the horizontal dashed line is the intersection of $\{\left<\xi\right>^{-1} g(\xi\cdot dx - z\,dt^*,dt^\star)=0\}$ with the interior $T^*X_+$; thus $\Sigma_\pm$ are contained above/below the horizontal dashed line. The sources/sinks $L_\pm$ are represented by the bold dots, lying over $\{r =r_+\}$. Some flow lines of $\left<\xi\right>^{-1}H_{p_+}$ within $\Sigma$, and their directions, are drawn. Because $p_+$ is not elliptic, flow lines in $\widehat{\Sigma}$ can enter $\{r> r_+\}$, over the ergoregion; an example is drawn within $\widehat{\Sigma}_+$. The elliptic sets of $Q_\pm$ are also shown. Any flow line starting at a point in $\Sigma_+\setminus L_+$ either tends to $L_+$ or enters $\{q_+ \neq 0\}$ in the backward direction (see Lemmas \ref{lem:nontrapping}, \ref{lem:Qpm} for precise statements); the same is true for flow lines starting at points in $\Sigma_-\setminus L_-$, but with the directions reversed.}\end{figure} }

Fix an interval $[a,b] \subseteq (0,\infty)$ and define $Q = Q_+ + Q_-$, where $Q_\pm$ are given by Lemma \ref{lem:Qpm}. Then the modified operator $P_{h,+}(z) -iQ$
satisfies the nontrapping condition of \cite[Definition 2.12]{vasy:2013}, and hence \cite[Theorem 2.14]{vasy:2013} is valid:

\begin{prop} \label{prop:nontrapping}
	Fix $[a,b] \subseteq (0,\infty)$ and $[C_-, C_+] \subseteq (-\varkappa/2,\infty)$. Then there are $C>0$ and $h_0 > 0$ such that
	\[
	\mathcal{R}_{h,+}(z) := (P_{h,+}(z) -iQ)^{-1} : L^2(X_+) \rightarrow \mathcal{X}_+
	\]
	exists and satisfies the non-trapping bounds
	\[
	\| \mathcal{R}_{h,+}(z) u \|_{L^2(X_+) \rightarrow {H}^1_h(X_+)} \leq Ch^{-1}
	\]
	for $z \in [a,b] + ih[C_-,C_+]$ and $h \in (0,h_0)$. 
\end{prop}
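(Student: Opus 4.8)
The statement is precisely the content of \cite[Theorem 2.14]{vasy:2013} applied to the modified operator $P_{h,+}(z) - iQ$, so the entire task is to verify that this operator fits into Vasy's framework with the allowed range of spectral parameters. The plan is to check the hypotheses in the order they are organized in \cite[Section 2]{vasy:2013}: (i) the structure of the operator and its principal symbol at fiber infinity; (ii) the radial point / normally hyperbolic trapping estimates at $L_\pm$, which is where the threshold $-\varkappa/2$ enters; (iii) the complex absorption hypothesis, i.e. that $Q = Q_+ + Q_-$ renders the flow effectively nontrapping; and (iv) the global Fredholm-to-invertibility passage. Steps (i) and (ii) are essentially imported: the symbolic and dynamical structure of $P_{h,+}(z)$ at fiber infinity coincides with that of $P_h(z)$ near $\widehat\Sigma$ (their principal symbols agree there, since $g_+ = g$ near $\{r < R/10\}$ and $\widehat\Sigma$ projects into the ergoregion which sits inside this region), and the radial point estimates at the source/sink $L_\pm$ were already verified for $P_h(z)$ in \cite[Section 4]{gannot:2014:kerr} following \cite[Section 6.3]{vasy:2013}. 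The threshold regularity condition relating the imaginary part of the subprincipal symbol at $L_\pm$ to the surface gravity is what forces $[C_-, C_+] \subseteq (-\varkappa/2, \infty)$: writing $\Im z = h\sigma$ with $\sigma \in [C_-, C_+]$, the relevant quantity is of the form (skew-symmetric part of the subprincipal symbol at $L_\pm$) $= \mp\tfrac{1}{2}\varkappa + \sigma$ modulo the choice of function space weight, and the hyperbolic estimate goes through precisely when this has the correct sign, which holds on a variable-order or fixed high-order space exactly when $\sigma > -\varkappa/2$.

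For step (iii), the key point is that by Lemma \ref{lem:Qpm}, every integral curve of $\langle\xi\rangle^{-1} H_{p_+}$ in $\Sigma_\pm \setminus L_\pm$ either converges to the radial set $L_\pm$ in the appropriate time direction or enters the elliptic set of $Q_\pm$, and since $\pm q_\pm \geq 0$ with $\mathrm{WF}_h(Q_+) \cap \mathrm{WF}_h(Q_-) = \emptyset$, the operator $-iQ$ is absorbing on $\Sigma_+$ and $\Sigma_-$ with the signs consistent with propagation away from the sources and into the sinks. This is exactly the nontrapping hypothesis of \cite[Definition 2.12]{vasy:2013}: combined with the radial point estimates at $L_\pm$ one obtains, by a standard positive-commutator / propagation-of-singularities argument, the a priori estimate
\[
\|u\|_{H^1_h(X_+)} \leq Ch^{-1}\|(P_{h,+}(z) - iQ)u\|_{L^2(X_+)}
\]
for $u \in \mathcal{X}_+$, with $C$ uniform for $z \in [a,b] + ih[C_-, C_+]$ and $h$ small. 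Propagation estimates away from $L_\pm$ lose one power of $h$, the radial point estimates near $L_\pm$ lose none, and elliptic regularity at fiber infinity and on the support of $Q$ contributes the rest; patching these together with a microlocal partition of unity subordinate to the flow gives the stated $Ch^{-1}$. The adjoint $(P_{h,+}(z) - iQ)^*$ has the same structure with the roles of $\Sigma_+, \Sigma_-$ and the flow direction reversed, so the analogous estimate holds for it, and Fredholm index zero (from Proposition \ref{prop:Rfredholm}, noting $-iQ$ is a relatively compact perturbation) together with the a priori estimate — which in particular shows the kernel is trivial — yields invertibility. Hence $\mathcal{R}_{h,+}(z) = (P_{h,+}(z) - iQ)^{-1}$ exists and obeys the claimed bound.

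\textbf{Main obstacle.} The only genuinely nontrivial step is (ii): checking that the threshold condition at the radial sets $L_\pm$ accommodates the full range $[C_-, C_+] \subseteq (-\varkappa/2, \infty)$ with uniform constants, and in particular that one can choose the order of the Sobolev space (or an escape-function weight) compatibly with this range and with the $H^1$-based spaces $\mathcal{X}_+$ appearing in the statement. This is where the precise value of the surface gravity and the identification of the subprincipal symbol at $L_\pm$ matter, and it is the place where one must be careful that the $z$-dependence through $\Im z = h\sigma$ enters the estimates only at subprincipal order so that the radial point estimates remain uniform in $\sigma \in [C_-, C_+]$. Everything else is a transcription of \cite[Sections 2, 6--7]{vasy:2013} combined with the already-verified structure in \cite[Section 4]{gannot:2014:kerr}.
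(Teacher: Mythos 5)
Your proposal follows exactly the paper's route: the paper proves Proposition \ref{prop:nontrapping} simply by observing that, with $Q = Q_+ + Q_-$ from Lemma \ref{lem:Qpm}, the operator $P_{h,+}(z) - iQ$ satisfies the nontrapping condition of \cite[Definition 2.12]{vasy:2013} (the symbolic and radial-point structure at $L_\pm$ having been verified in \cite[Section 4]{gannot:2014:kerr}), and then invoking \cite[Theorem 2.14]{vasy:2013} directly. Your additional sketch of the positive-commutator mechanism and the threshold condition at $L_\pm$ is consistent with that citation, so the argument is correct and essentially identical to the paper's.
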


\begin{rem} The semiclassical Sobolev space ${H}^1_h(X_+)$ appearing in Proposition \ref{prop:nontrapping} is ${H}^1(X_+)$ as a set, but equipped with the $h$-dependent norm 
	\[
	\| u \|_{{H}^1_h(X_+)} = \| u \|_{L^2(X_+)} + h\| du \|_{L^2(X_+)}.
	\]
	\end{rem}

The next task is to prove that $R_{h,+}(z)$ exists for $z\in [a,b] + i[M h,\infty)$ and $M > 0$ sufficiently large, with a corresponding estimate. This does not follow from Proposition \ref{prop:Rfredholm} since there one must take $\Im z > 0$ independent of $h$. The simplest way to prove this result is by energy estimates. For the next lemma it suffices to cite \cite[Lemma 4.6]{gannot:2014:kerr}, but see also Section \ref{subsect:approximateinfinity} where the same argument is adapted to the more complicated geometry near $Y$.

\begin{lem} \label{lem:invertible1}
	There exists $M>0$ and $C>0$ such that
	\[
	|\lambda| \| u \|_{L^2(X_+)} + \| du \|_{L^2(X_+)} \leq \frac{C}{\Im \lambda} \| P_+(\lambda) u \|_{L^2(X_+)}
	\]
	for each $\lambda \in \mathbb{R} + i[M,\infty)$ and  $u \in \mathcal{X}_+$.
\end{lem}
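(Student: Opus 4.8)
The plan is to derive the estimate from a high-frequency energy identity obtained by pairing $P_+(\lambda)u$ against a suitable multiplier adapted to the stationary structure of $g_+$. Recall that $P_+(\lambda) = e^{i\lambda t^\star}\varrho^2(\Box_{g_+}+\nu^2-9/4)e^{-i\lambda t^\star}$ and, by Lemma \ref{lem:adm}, the metric $g_+$ is stationary with $dt^\star$ everywhere timelike; in particular $g_+^{-1}(dt^\star,dt^\star) = A^{-2} > 0$ is bounded above and below on $X_+$ (which is compact up to the boundary $H_\delta$, across which all coefficients extend smoothly). First I would write out $\varrho^2 g_+^{-1}$ in the form $-\Delta_r(\partial_r + f_+\partial_{t^\star})^2 + \cdots$ as in \eqref{eq:extendedmetric}, extended via the ADM data, and substitute $\partial_{t^\star}\mapsto -i\lambda$ to get an explicit expression for $P_+(\lambda)$ whose second-order part is a fixed (real, $\lambda$-independent) operator, whose first-order part is linear in $\lambda$, and whose zeroth-order part is quadratic in $\lambda$ with leading coefficient $\varrho^2 g_+^{-1}(dt^\star,dt^\star)\lambda^2$ (up to sign), which is elliptic.

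The key step is the multiplier computation: pair $P_+(\lambda)u$ with $\bar u$ in $L^2(X_+)$ (with respect to the natural density), integrate by parts, and split into real and imaginary parts. The imaginary part is governed by $\Im\lambda$ times a positive-definite quadratic form in $(|\lambda| u, du)$ — this is exactly where timelikeness of $dt^\star$ for $g_+$ enters, guaranteeing that the coefficient of $|\lambda|^2|u|^2$ has a definite sign — plus boundary contributions on $H_\delta$ that have the favorable sign (this is the standard fact that $H_\delta$ is inside the black hole, so the flux through it is outgoing; since $P_+$ agrees with $P$ near $H_\delta$ and $H_\delta$ is spacelike, the boundary term is a controllable error, or one uses that the extended operator is formally self-adjoint modulo such a term). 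Taking $\Im\lambda \geq M$ large absorbs all first- and zeroth-order error terms (which are $O(|\lambda|\|u\|\|du\|) + O(\|u\|^2)$, beaten by the gain of a full power of $\Im\lambda$ once $M$ is large), leaving
\[
\Im\lambda\left(|\lambda|^2\|u\|_{L^2}^2 + \|du\|_{L^2}^2\right) \leq C\,\|P_+(\lambda)u\|_{L^2}\left(|\lambda|\|u\|_{L^2} + \|du\|_{L^2}\right) + (\text{boundary}),
\]
and dividing through by the square root of the left-hand quadratic form gives the claim. The boundary terms on $H_\delta$ are handled exactly as in \cite[Lemma 4.6]{gannot:2014:kerr}; the argument there carries over verbatim since near $H_\delta$ one has $g_+ = g$.

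The main obstacle is the ergoregion: for the true Kerr--AdS metric $\partial_{t^\star}$ fails to be timelike near the horizon, so pairing against $\bar u$ alone does not produce a positive form in $|\lambda|^2|u|^2$. The resolution — and the reason the extended operator $P_+$ is used rather than $P$ — is that $dt^\star$ (as opposed to $T = \partial_{t^\star}$) \emph{is} timelike for $g_+$ everywhere on $X_+$ by construction in Lemma \ref{lem:adm}; and the coefficient of $\lambda^2$ in $P_+(\lambda)$, after the conjugation, is precisely $\pm\varrho^2 g_+^{-1}(dt^\star,dt^\star)$, which is the quantity that needs a sign, not $g_+(T,T)$. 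So the ellipticity of $\varrho^2 g_+^{-1}(dt^\star,dt^\star)$ replaces the usual positivity-of-stationary-energy argument, and no restriction to a fixed axial mode is needed here (consistent with the statement, which places no such restriction). One must also be slightly careful that the density with respect to which $L^2(X_+)$ is defined need not be the Riemannian volume of $h$, but since any two smooth positive densities on the compact-up-to-boundary manifold $X_+$ are comparable, and the integration-by-parts identity only changes by lower-order (zeroth-order, $\lambda$-independent) terms absorbed into the large-$M$ argument, this causes no difficulty.
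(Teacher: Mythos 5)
The central mechanism of your argument does not work as claimed. If you pair $P_+(\lambda)u$ against $\bar u$ and integrate by parts, the $\lambda$-independent second-order part contributes a \emph{real} quantity (essentially $\int h(du,d\bar u)$ plus lower order), so $\Im\langle P_+(\lambda)u,u\rangle$ contains no $\|du\|^2$ term with a definite coefficient at all; and the $\lambda^2$ term contributes $\Im(\lambda^2)\langle Q_0u,u\rangle = 2\Re\lambda\,\Im\lambda\,\langle Q_0u,u\rangle$, i.e.\ a term of size $\Re\lambda\,\Im\lambda\,\|u\|^2$ rather than $\Im\lambda\,|\lambda|^2\|u\|^2$ (it vanishes identically when $\Re\lambda=0$). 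So the assertion that ``the imaginary part is governed by $\Im\lambda$ times a positive-definite quadratic form in $(|\lambda|u,du)$'' is false for the multiplier $\bar u$: timelikeness of $dt^\star$ only gives a sign to the coefficient of $\lambda^2$, which is not enough. Worse, the $\lambda$-linear first-order terms --- which on $\{r_+-\delta<r<R\}$, where $g_+=g$, are exactly the shift/ergoregion terms and are not small --- produce cross terms of size $|\lambda|\,\|u\|\,\|du\|$ (or $|\lambda|^2\|u\|\,\|du\|$ if one weights the pairing by $\bar\lambda$). These are not ``beaten by a full power of $\Im\lambda$'': the lemma must hold on all of $\mathbb{R}+i[M,\infty)$, where $|\lambda|/\Im\lambda$ is unbounded, so no fixed large $M$ absorbs them. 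This is precisely the ergoregion obstruction: the quadratic form produced by the natural energy multiplier $T\bar v$ (equivalently the $\bar\lambda$-weighted pairing) is positive definite only where $T$ is timelike, which fails on part of $X_+$; this is why Proposition \ref{prop:uppermodes} needs axisymmetry or the Hawking--Reall bound, and why Lemma \ref{lem:invertible1} requires $\Im\lambda\ge M$ \emph{large} rather than merely $\Im\lambda>0$. Your remark that the relevant sign is that of $g_+^{-1}(dt^\star,dt^\star)$ rather than $g_+(T,T)$ is where the proposal goes astray: that positivity makes the slices uniformly spacelike (and gives semiclassical ellipticity for $\Im z>0$ fixed, as in Proposition \ref{prop:Rfredholm}), but fed only into the $\bar u$-pairing it does not produce the stated coercive imaginary part.

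For comparison, the paper's proof of Lemma \ref{lem:invertible1} simply cites the energy estimate of \cite[Lemma 4.6]{gannot:2014:kerr} --- a divergence-identity argument of the same type as Lemma \ref{lem:twistedinvertible}, in which the multiplier is a derivative of $v$ along a causal direction (not $\bar v$ itself), and the terms lacking a sign over the ergoregion are absorbed precisely because the bulk term carries the prefactor $\Im\lambda\ge M$, with the flux through the spacelike surface $H_\delta$ having a favorable sign for such causal currents. The only genuinely new points here, which the paper records, are that the support hypothesis of \cite[Lemma 4.6]{gannot:2014:kerr} is replaced by compactness of $X_+\cup H_\delta$ (using the extension of Lemma \ref{lem:adm}), and a density argument showing $C^2(X_+\cup H_\delta)$ is dense in $\mathcal{X}_+$, so that the estimate, proved for classical functions, extends to all $u\in\mathcal{X}_+$; your proposal also leaves this last regularity point unaddressed. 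To repair your write-up you would need to replace the $\bar u$-multiplier by a future-directed timelike multiplier (necessarily non-Killing, or otherwise modified, over the ergoregion) and show that the resulting error terms, which are quadratic forms in $(\lambda u,du)$ with $\lambda$-independent coefficients, are absorbed by the main term carrying the factor $\Im\lambda\ge M$.
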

\begin{proof}
	This result is essentially proved in \cite[Lemma 4.6]{gannot:2014:kerr}, the only difference being that there $u$ was required to have support in $\{ r_+-\delta \leq r <r_1\}$ for some $r_1 > r_+$. Here, that condition is replaced by the compactness of $X_+ \cup H_\delta$. Although \cite[Lemma 4.6]{gannot:2014:kerr} applies to $u \in C^2(X_+ \cup H_\delta)$, the latter space is in fact dense in $\mathcal{X}_+$  --- see \cite[Lemma E.4.2]{zworski:resonances} or \cite[Section 2.6]{vasy:2013} for example.
\end{proof}

Lemma \ref{lem:invertible1} implies that $P_+(\lambda)$ is injective for $\lambda \in \mathbb{R} + i[M,\infty)$. Thus $P_+(\lambda)$ is invertible for such $\lambda$ since it is of index zero according to Proposition \ref{prop:Rfredholm}. Furthermore, after applying the semiclassical rescaling, there exists $C>0$ and $M>0$ such that
\begin{equation} \label{eq:horizoninvertible}
\| R_{h,+}(z) \|_{L^2(X_+)\rightarrow {H}^1_h(X_+)} \leq \frac{C}{\Im z}
\end{equation}
for $z \in \mathbb{R} + i[M h,\infty)$.

\subsection{An approximate inverse near infinity} \label{subsect:approximateinfinity}
The next step is to prove an analogue of Lemma \ref{lem:invertible1} for the operator $P_\infty(\lambda)$ defined in Section \ref{subsect:localizing}. This will now involve boundary contributions from the conformally timelike boundary $\mathcal{I}$. The function $s$ is extended as a positive function to all of $\mathcal{M}_\infty$ such that $Ts = 0$, and $r$ is extended as well via the formula $r = s^{-1}$. If $\varepsilon > 0$ is sufficiently small, then $\{s < \varepsilon\}$ defines a neighborhood of $\mathcal{I}$ in $\mathcal{M}_\infty$. Below, the following notation will be used:
\begin{itemize} \itemsep6pt
	\item $dS_t$ is the measure induced on $X_\infty$ and $A = g^{-1}_\infty(dt^\star,dt^\star)^{-1/2}$ is the lapse function for $g_\infty$,
	\item $d\mathcal{K}_\varepsilon$ is the induced measure on $X_\infty \cap \{s = \varepsilon\}$ and $A_{\varepsilon} = k_\varepsilon^{-1}(dt^\star,dt^\star)^{-1/2}$, where $k_\varepsilon$ is the induced Lorentzian metric on $\{ s= \varepsilon\}$.
	\item $N_t$ is the unit normal to $X_\infty$ pointing in the direction of increasing $t^\star$, and $N_s$ is the unit normal to $X_\infty \cap \{s = \varepsilon\}$ pointing in the direction of decreasing $s$ (the dependence of the latter on $\varepsilon$ is suppressed for convenience).
\end{itemize}
If $V$ is a $C^1$ vector field on $\mathcal{M}_\infty \cup \mathscr{I}$, then from the divergence theorem,
\begin{multline} \label{eq:divergence3}
\partial_{t^\star}  \int_{X_\infty \cap \{s \geq \varepsilon\}} g_\infty(V,N_t) \, d{S}_t - \int_{X_\infty \cap \{ s = \varepsilon \}} g_\infty(V,N_s) A_{\varepsilon} \, d\mathcal{K}_\varepsilon  \\ = \int_{X_\infty \cap \{ s \geq \varepsilon\}} \left(\mathrm{div}_{g_\infty} V\right) A\, dS_t,
\end{multline}
see \cite[Equation 4.3]{gannot:2014:kerr} as well as \cite[Lemma 3.1]{warnick:2015:cmp}. 
The data associated with the foliation by surfaces of constant $t^\star$ have conformal analogues: if $h_\infty$ is the induced metric on $X_\infty$, then
\[
A = s^{-1} \bar A, \quad h_\infty = s^{-2} \, \bar h_\infty,
\]
where $\bar A,\, \bar h_\infty$ are smooth up to $\mathscr{I}$. Similarly, define $\bar{A}_\varepsilon = sA_\varepsilon$; because $X_\infty$ and $\mathcal{I}$ intersect orthogonally with respect to $s^2g$,
\[
\bar A_\varepsilon \rightarrow \bar A|_{s=0}
\]
as $\varepsilon \rightarrow 0$. There are also conformally related measures on $X_\infty$ and $X_\infty \cap \{s = \varepsilon\}$ satisfying
\[
dS_t = s^{-3}d\bar{S}_t, \quad d\mathcal{K}_\varepsilon = s^{-2} d \bar{\mathcal{K}}_\varepsilon.
\]
In addition $N_t = s \bar N_t$ and $N_s = s \bar N_s$ where $\bar {N}_t, \bar N_s$ are the corresponding unit normals for $\bar{g}_\infty$, smooth up to $\mathcal{I}$. The goal is to eventually let $\varepsilon \rightarrow 0$ in \eqref{eq:divergence3}.

Written in terms of $s$, the traces $\gamma_\pm$ given by \eqref{eq:rtraces} have the form
\[
\gamma_- v = s^{\nu - 3/2} v|_{\mathcal{I}}, \quad \gamma_+ v = -s^{1-2\nu}\partial_s(s^{\nu-3/2}v)|_{\mathcal{I}}.
\]
Although $\gamma_\pm$ can be given weak formulations, for the energy estimates it is more useful to work with a space of smooth functions on which $\gamma_\pm$ are defined in the classical sense. Given $\nu \in (0,1)$, let $\mathcal{F}_\nu(\mathcal{M}_\infty)$ denote the space of all $v \in C^\infty(\mathcal{M}_\infty)$ admitting an expansion
\begin{equation} \label{eq:Fnu}
v(s,y) = s^{3/2+\nu}v_+(s^2,y) + s^{3/2-\nu}v_-(s^2,y)
\end{equation}
near $\mathcal{I}$, where $(s,y) \in [0,\varepsilon)\times \mathcal{I}$, and $v_\pm$ are smooth up to $\mathcal{I}$. If $v \in \mathcal{F}_\nu(\mathcal{M}_\infty)$ is given by \eqref{eq:Fnu}, then
\[
\gamma_- v(\cdot) = v_-(0,\cdot), \quad \gamma_+v (\cdot) = (-2\nu)v_+(0,\cdot).
\] 
If $\nu \geq 1$, say that $u \in \mathcal{F}_\nu(\mathcal{M}_\infty)$ if there exists $\delta >0$ such that $\supp u \subseteq \{ s > \delta\}$. The space $\mathcal{F}_\nu(X_\infty)$ is defined in the same way, simply replacing $\mathcal{M}_\infty$ with $X_\infty$.

For general boundary conditions (of the type considered in Section \ref{subsect:QNMs}), the boundary contribution on $X_\infty \cap \{s = \varepsilon\}$ arising from the usual stress-energy tensor will diverge as $\varepsilon \rightarrow 0$.  This can be remedied by introducing a ``twisted'' stress-energy tensor as in \cite{holzegel:2012wt,warnick:2013:cmp,warnick:2015:cmp} --- the reader is referred to these works for a more complete point of view. For this, fix a smooth function $q > 0$. Given a vector field $Y$ on $\mathcal{M}_\infty$, define the operator $\widetilde{Y}$ by
\[
\widetilde{Y}v = q \,Y (q^{-1} v),
\]
as well as the covector $\tilde{d} v = q\, d(q^{-1} v)$. The twisted stress-energy tensor $\widetilde{\mathbb{T}} = \widetilde{\mathbb{T}}[v]$ is defined by
\begin{align} \label{eq:twistedstressenergy}
\widetilde{\mathbb{T}}(Y,Z) &= \left[\Re \left(\widetilde{Y}v \cdot \widetilde{Z}\bar{v}\right) - \tfrac{1}{2}g_\infty(Y,Z)\,g_\infty^{-1}(\tilde{d}v,\tilde{d}\bar{v})\right] \notag \\ &+ \tfrac{1}{2}g_\infty(Y,Z)( Q + \nu^2 - 9/4) |v|^2 ,
\end{align}
where $Q = q^{-1}\Box_{g_\infty}(q)$ is a scalar potential. The term in square brackets is positive definite in $\tilde{d}v$ provided $Y,Z$ are timelike in the same lightcone. The twisting function $q$ is chosen so that 
\[
Q  + \nu^2 - 9/4 = \mathcal{O}(s^{2}).
\]
If $Y,Z$ are smooth up to $\mathscr{I}$, this guarantees that $g_\infty(Y,Z)(Q+\nu^2 -9/4)$ is also smooth up to $\mathscr{I}$. The simplest choice of $q$ with this property is $q = s^{3/2-\nu}$.

\begin{rem} Since multiplication by $Q + \nu^2 - 9/4$ is a zeroth order operator, the precise sign properties of $Q$ will not be important in the high frequency regime. More refined choices of $q$ leading to positive $Q$ are discussed at length in \cite{holzegel:2012wt}.
\end{rem}

Next, let $\widetilde{\mathbb{J}}^Y = \widetilde{\mathbb{J}}^Y[v]$ denote the unique vector field such that $g_\infty(\widetilde{\mathbb{J}}^Y,Z) = \widetilde{\mathbb{T}}(Y,Z)$.

\begin{lem} \label{lem:twisteddivergence}
Suppose that $Y$ is Killing for $g_\infty$ and $Yq = 0$. 	If $v \in \mathcal{F}_\nu(\mathcal{M}_\infty)$ and $F =(\Box_{g_\infty} + \nu^2 - 9/4)v$, then
	\[
	\mathrm{div}_{g_\infty} \, \widetilde{\mathbb{J}}^Y = \Re \left( F \cdot Y\bar{v}  \right)
	\]
\end{lem}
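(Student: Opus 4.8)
The plan is to compute $\mathrm{div}_{g_\infty}\widetilde{\mathbb{J}}^Y$ directly in an abstract index / covariant way, treating the twisting as a conjugation. First I would record the basic algebraic identities: since $\widetilde{Y}v = q\,Y(q^{-1}v) = Yv - (q^{-1}Yq)v$ and $Yq = 0$ by hypothesis, in fact $\widetilde{Y}v = Yv$ whenever $Y$ is the Killing field in question. More generally, for an arbitrary test field $Z$ one has $\widetilde{Z}v = Zv - (q^{-1}Zq)v$, and the twisted differential satisfies $\tilde{d}v = dv - (q^{-1}dq)\,v$. The key point is that the twisted operator $\widetilde{\Box}_{g_\infty} := q\,\Box_{g_\infty}(q^{-1}\,\cdot)$ can be written as $\widetilde{\Box}_{g_\infty} = \mathrm{div}_{g_\infty}\circ \tilde{d} + Q$ where $Q = q^{-1}\Box_{g_\infty}q$, i.e. $\widetilde{\Box}_{g_\infty}v = g_\infty^{-1}(\tilde d \cdot, \text{covariant derivative of } \tilde d v) \dots$ — more precisely $q^{-1}\widetilde{\Box}_{g_\infty}v = \Box_{g_\infty}(q^{-1}v) = q^{-1}\big(\mathrm{div}_{g_\infty}(\tilde d v) + Q v\big)$, which is the twisted analogue of the Bochner-type identity underlying the standard stress-energy computation.

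Next I would carry out the divergence computation. Write $\widetilde{\mathbb{J}}^Y_\mu = \widetilde{\mathbb{T}}_{\mu\nu}Y^\nu$ with
\[
\widetilde{\mathbb{T}}_{\mu\nu} = \Re\!\big(\widetilde{\nabla}_\mu v\,\overline{\widetilde{\nabla}_\nu v}\big) - \tfrac12 g_{\mu\nu}\,g^{\alpha\beta}\widetilde{\nabla}_\alpha v\,\overline{\widetilde{\nabla}_\beta v} + \tfrac12 g_{\mu\nu}(Q + \nu^2 - 9/4)|v|^2,
\]
where I abbreviate $\widetilde{\nabla}_\mu v = (\tilde d v)_\mu$. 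Taking $\nabla^\mu$ and using that $Y$ is Killing (so $\nabla^{(\mu}Y^{\nu)} = 0$, hence $Y^\nu\nabla^\mu$ hits only $\widetilde{\mathbb{T}}_{\mu\nu}$), one gets $\mathrm{div}\,\widetilde{\mathbb{J}}^Y = Y^\nu \nabla^\mu \widetilde{\mathbb{T}}_{\mu\nu}$. Now expand $\nabla^\mu\widetilde{\mathbb{T}}_{\mu\nu}$: the derivative of the quadratic-in-$\widetilde{\nabla}v$ terms produces, after contracting, a term $\Re\big((\widetilde{\Box}\text{-type operator applied to }v)\,\overline{\widetilde{\nabla}_\nu v}\big)$ plus a commutator term coming from $[\nabla^\mu,\widetilde{\nabla}_\nu] = [\nabla^\mu,\nabla_\nu] - \nabla^\mu(q^{-1}\nabla_\nu q)$; the pure-curvature part of this commutator vanishes on the gradient-type object exactly as in the untwisted case, while the remaining pieces are arranged to cancel against the derivative of the $Q|v|^2$ term — this is precisely the algebraic reason the twisted tensor is defined with the extra $\tfrac12 g_{\mu\nu}(Q+\nu^2-9/4)|v|^2$ and why $Q = q^{-1}\Box q$ is the correct potential. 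After the dust settles, $\nabla^\mu\widetilde{\mathbb{T}}_{\mu\nu} = \Re\big(\big(\mathrm{div}_{g_\infty}\tilde d v + (Q + \nu^2 - 9/4)v\big)\,\overline{\widetilde{\nabla}_\nu v}\big)$, and recognizing $\mathrm{div}_{g_\infty}\tilde d v + (Q+\nu^2-9/4)v = q\,(\Box_{g_\infty} + \nu^2 - 9/4)(q^{-1}\cdot q\, v)$... more simply, since $q^{-1}v$ would be the relevant quantity — here one uses directly that $F = (\Box_{g_\infty}+\nu^2-9/4)v$ and the twisted Bochner identity to identify $\mathrm{div}_{g_\infty}\tilde dv + Q v = $ (the part of $\Box v$ captured by the twisting), ultimately giving $\nabla^\mu\widetilde{\mathbb{T}}_{\mu\nu} = \Re(F\,\overline{\widetilde{\nabla}_\nu v})$. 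Contracting with $Y^\nu$ and using $\widetilde{Y}\bar v = Y\bar v$ (again because $Yq = 0$) yields $\mathrm{div}_{g_\infty}\widetilde{\mathbb{J}}^Y = \Re(F\cdot Y\bar v)$, which is the claim.

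The main obstacle I expect is bookkeeping in the middle step: verifying that the commutator terms generated by moving $\nabla^\mu$ past the twisting factor $q^{-1}\nabla_\nu q$ conspire exactly with the gradient of the $Q|v|^2$ term. This is a purely algebraic cancellation, but it is the crux of why the twisted stress-energy tensor is built the way it is, and it requires care with the Killing property of $Y$ (to kill the $\nabla^{(\mu}Y^{\nu)}$ terms) and with the hypothesis $Yq = 0$ (so that $\widetilde{Y}v = Yv$ and no extra zeroth-order term survives on the right-hand side). Once that cancellation is confirmed, the rest is a routine contraction, and one could alternatively shortcut the whole computation by substituting $w = q^{-1}v$, observing $\widetilde{\mathbb{T}}[v]$ becomes the ordinary (conformally/potential-modified) stress-energy tensor of $w$ for the operator $q^{-2}$-conjugated wave operator, and invoking the standard untwisted divergence identity — I would present the direct computation but remark on this equivalent viewpoint.
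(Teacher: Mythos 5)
Your overall strategy (a direct covariant computation of $\mathrm{div}_{g_\infty}\widetilde{\mathbb{J}}^Y$, using the Killing property to discard the symmetrized $\nabla Y$ terms) is the same one the paper invokes, via the citation of the direct calculation in Warnick's work. But your middle step contains a genuine error: the uncontracted identity you assert, $\nabla^\mu\widetilde{\mathbb{T}}_{\mu\nu} = \Re\big((\mathrm{div}_{g_\infty}\tilde{d}v + (Q+\nu^2-9/4)v)\,\overline{\widetilde{\nabla}_\nu v}\big)$, is false. If you write $\phi_\mu = \nabla_\mu \log q$ and carry out the computation, the twisted Hessian is not symmetric ($\nabla_\mu\widetilde{\nabla}_\nu v - \nabla_\nu\widetilde{\nabla}_\mu v = \phi_\mu\nabla_\nu v - \phi_\nu\nabla_\mu v$), and the cancellation against the gradient of the $\tfrac12 g_{\mu\nu}(Q+\nu^2-9/4)|v|^2$ term is only partial: one is left with
\begin{equation*}
\nabla^\mu\widetilde{\mathbb{T}}_{\mu\nu} \;=\; \Re\big(F\,\overline{\widetilde{\nabla}_\nu v}\big) \;-\; \phi_\nu\, g_\infty^{-1}(\tilde{d}v,\tilde{d}\bar{v}) \;+\; (Q+\nu^2-9/4)\,\phi_\nu\,|v|^2 \;+\; \tfrac12\,|v|^2\,\nabla_\nu Q
\end{equation*}
(up to overall sign conventions). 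The residual terms proportional to $\phi_\nu$ and to $\nabla_\nu Q$ do not vanish pointwise; they are killed only after contracting with $Y^\nu$, using $Y^\nu\phi_\nu = q^{-1}Yq = 0$ and $YQ = 0$. The second condition is where the Killing hypothesis enters a second time, which your outline misses entirely: $Q = q^{-1}\Box_{g_\infty}q$ involves the metric, and $Yq=0$ alone does not give $YQ=0$; one needs $Y$ Killing so that $Y(\Box_{g_\infty}q) = \Box_{g_\infty}(Yq) = 0$. This is exactly the observation the paper's proof singles out. In your plan, $Yq=0$ is used only to identify $\widetilde{Y}v$ with $Yv$ and the Killing property only to discard $\nabla^{(\mu}Y^{\nu)}$, so the argument as written would not close.

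A second, related slip: $\mathrm{div}_{g_\infty}\tilde{d}v + (Q+\nu^2-9/4)v$ is not equal to $F$. The operator for which the identity $\Box_{g_\infty}v = \widetilde{\Box}v + Qv$ holds is the formally self-adjoint twisted operator $\widetilde{\Box}v = q^{-1}\nabla^\mu\big(q^2\nabla_\mu(q^{-1}v)\big) = \mathrm{div}_{g_\infty}\tilde{d}v + \phi^\mu\widetilde{\nabla}_\mu v$, so your identification is off by the first-order term $\phi^\mu\widetilde{\nabla}_\mu v$; this term is in fact one source of the residual $\phi_\nu$-terms above. The same caution applies to your proposed shortcut via $w = q^{-1}v$: since $\widetilde{\mathbb{T}}[v] = q^2\times(\text{quadratic in }\nabla w) + \dots$, the divergence produces derivatives of $q^2$ and the ``standard untwisted identity'' does not apply verbatim. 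Once you restore the residual terms and use both $Yq=0$ and $YQ=0$ (the latter from the Killing property) at the contraction stage, your computation does yield the lemma, and it is then essentially the calculation the paper delegates to the cited reference.
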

\begin{proof}
	Since $Y$ is Killing, the condition $Yq = 0$ also implies $YQ=0$, and then the result follows from a direct calculation \cite[Lemma 2.5]{warnick:2015:cmp}.
\end{proof}

If $q = s^{3/2-\nu}$, then $Tq = 0$ and hence Lemma \ref{lem:twisteddivergence} is valid with $Y=T$. Now apply \eqref{eq:divergence3} to the vector field $\widetilde{\mathbb{J}}^{T}$, where $v \in \mathcal{F}_\nu(\mathcal{M}_\infty)$. Consider the integral over $X_\infty \cap \{s \geq \varepsilon\}$, which can be written as
\[
\int_{X_\infty \cap \{s \geq \varepsilon\}} \widetilde{\mathbb{T}}(T,\bar{N}_t)\, s^{-2} \, d\bar{S}_t.
\]
Checking the various powers of $s$, this integral has a limit as $\varepsilon \rightarrow 0$ for $v \in \mathcal{F}_\nu(\mathcal{M}_\infty)$. This also motivates the following spaces: let $\mathcal{L}^2(X_\infty)$ denote the space of distributions for which
\[
\| u \|_{\mathcal{L}^2(X_\infty)} = \int_{X_\infty} |u|^2 \, s \,dS_t < \infty,
\]
and let $\Sob^1(X_\infty)$ denote the space of distributions for which
\[
\| u \|_{\Sob^1(X_\infty)} = \int_{X_\infty}\left( |u|^2 + s^{-2} h_{\infty}^{-1}\left(\tilde du, \tilde{d}\bar u\right) \right) s \,dS_t < \infty.
\]
Compare these spaces with those defined in Section \ref{subsect:QNMs}. 

Next, consider the integral in \eqref{eq:divergence3} over $X_\infty \cap \{s = \varepsilon\}$. This is only relevant in the case $\nu \in (0,1)$, since if $\nu \geq 1$ then the integral automatically vanishes for $v \in \mathcal{F}_\nu(\mathcal{M}_\infty)$ and $\varepsilon >0 $ sufficiently small. Since $T$ and $N_s$ are orthogonal and $Tq = 0$, this reduces to 
\begin{equation} \label{eq:boundaryintegral}
\int_{X_\infty \cap \{s=\varepsilon\}} \Re \left( Tv \cdot \widetilde{N}_s  \bar{v} \right) s^{-3} \bar{A}_{\varepsilon}  \, d\bar{\mathcal{K}}_\varepsilon.
\end{equation}
Furthermore, $N_s = -s\partial_s + \mathcal{O}(s^3)$.  Write
\[
s^{-3} \,Tv \cdot\widetilde{N}_s \bar{v} =\left( s^{\nu-3/2}Tv \right) \left( s^{-3/2-\nu}\widetilde{N}_s \bar{v} \right),
\]
and notice that this tends to $\gamma_- T v \cdot \gamma_+ \OL{v}$ as $\varepsilon\rightarrow 0$ for $v \in \mathcal{F}_\nu(X_\infty)$. Taking $\varepsilon \rightarrow 0$ in \eqref{eq:divergence3}, one therefore has the identity
\begin{multline} \label{eq:twisteddivergence}
\partial_{t^\star}\int_{X_\infty} \widetilde{\mathbb{T}}(T,\bar {N}_t) \,r^{-1{}}d{S}_t - \int_{Y} \Re \left(\gamma_- T v\cdot \gamma_+\bar{v} \right) \bar{A}\, d\bar{\mathcal K}_0  = \int_{X_\infty} \Re \left(   F \cdot T\bar {v} \right) A \,d{S}_t.
\end{multline}
Using \eqref{eq:twisteddivergence}, it is now straightforward to prove the analogue of Lemma \ref{lem:invertible1}. In the following, either $B = \gamma_-$ or $B = \gamma_+ + \beta \gamma_-$, where $\beta \in C^\infty(\mathscr{I};\mathbb{R})$ satisfies $T \beta = 0$.

\begin{lem} \label{lem:twistedinvertible}  There exists $C_0>0$ and $C>0$ such that
	\[
	|\lambda|\| u \|_{\mathcal{L}^2(X_\infty)} + \| u \|_{\Sob^1(X_\infty)} \leq \frac{C}{\Im \lambda} \| P_\infty(\lambda)u \|_{ \mathcal{L}^2(X_\infty)}
	\]
	for each $\lambda \in \mathbb{R} \setminus [-C_0,C_0] + i(0,\infty)$ and $u \in \mathcal{F}_\nu(X_\infty)$, provided $Bu = 0$ when $\nu \in (0,1)$.
\end{lem}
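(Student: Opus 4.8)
The plan is to derive the estimate from the twisted divergence identity \eqref{eq:twisteddivergence} applied to $v = e^{-i\lambda t^\star} u$ for $u \in \mathcal{F}_\nu(X_\infty)$. Since $T$ is timelike everywhere for $g_\infty$ (by the construction in Section \ref{subsect:localizing}) and $\bar N_t$ is timelike in the same lightcone, the bracketed part of $\widetilde{\mathbb{T}}(T,\bar N_t)$ is positive definite in $\tilde d v$, uniformly up to $\mathcal{I}$; the zeroth order term $\tfrac12 g_\infty(T,\bar N_t)(Q + \nu^2 - 9/4)|v|^2$ is $\mathcal{O}(1)$ relative to $\|u\|_{\mathcal{L}^2}^2$ and can be absorbed for $|\lambda|$ large. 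Substituting $v = e^{-i\lambda t^\star}u$, one has $Tv = -i\lambda e^{-i\lambda t^\star} u$ and $F = (\Box_{g_\infty} + \nu^2 - 9/4) v = e^{-i\lambda t^\star} \varrho^{-2} P_\infty(\lambda) u$, and the $\partial_{t^\star}$ derivative of the time-slice integral produces a factor of $2\Im \lambda$ times $\int_{X_\infty} \widetilde{\mathbb{T}}(T,\bar N_t)\, r^{-1}\, dS_t$ after dividing by the surviving exponential $e^{2\Im \lambda\, t^\star}$. This yields, schematically,
\[
2 \Im \lambda \int_{X_\infty} \widetilde{\mathbb{T}}(T,\bar N_t)\, r^{-1}\, dS_t - \int_Y \Re\!\left( \overline{-i\lambda\, \gamma_- u}\cdot \gamma_+ u\right)\bar A\, d\bar{\mathcal K}_0 = \Re \int_{X_\infty} \overline{(-i\lambda u)}\cdot \varrho^{-2}P_\infty(\lambda) u\, A\, dS_t.
\]

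Next I would handle the boundary term over $Y$. If $B = \gamma_-$ and $\gamma_- u = 0$ it simply vanishes. If $B = \gamma_+ + \beta\gamma_-$ with $T\beta = 0$, then on $Y$ one has $\gamma_+ u = -\beta\gamma_- u$, so $\Re(\overline{-i\lambda\gamma_- u}\cdot\gamma_+ u) = -\beta\, \Re(\overline{-i\lambda\gamma_- u}\cdot \gamma_- u) = -\beta\, \Im \lambda\, |\gamma_- u|^2$; hence the boundary term contributes $+\Im\lambda\int_Y \beta |\gamma_- u|^2 \bar A\, d\bar{\mathcal K}_0$ to the left side, with the correct sign structure so that (after dividing by $\Im\lambda > 0$) it is a definite-sign term, or at worst, using a Hardy/trace inequality, $|\int_Y \beta|\gamma_- u|^2\bar A\, d\bar{\mathcal K}_0| \leq \epsilon \|u\|_{\Sob^1(X_\infty)}^2 + C_\epsilon\|u\|_{\mathcal{L}^2(X_\infty)}^2$, absorbable into the main term and into a large-$|\lambda|$ correction. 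This is precisely the point flagged in the remark after Theorem \ref{theo:quasimodes}. I would also note that the $\mathcal{L}^2$ term on the left — needed to produce the $|\lambda|\|u\|_{\mathcal{L}^2}$ contribution rather than just $\|\tilde d u\|$ — comes from the zeroth-order piece of $\widetilde{\mathbb{T}}(T,\bar N_t)$ having, after the large-$|\lambda|$ correction, a favorable sign, together with the observation that $\widetilde{\mathbb{T}}(T,\bar N_t) \gtrsim |Tv|^2 + (\text{gradient terms}) \gtrsim |\lambda|^2|v|^2 + \dots$ up to absorbable errors; the $s^{-2}$-weighted gradient term in $\widetilde{\mathbb{T}}$ matches the weight in $\|\cdot\|_{\Sob^1(X_\infty)}$ after converting $\tilde d$-norms to $h_\infty^{-1}$-norms.

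Finally, dividing through by $\Im\lambda$ and applying Cauchy--Schwarz to the right-hand side in the form $|\Re\int \overline{(-i\lambda u)}\varrho^{-2} P_\infty(\lambda) u\, A\, dS_t| \leq |\lambda| \|u\|_{\mathcal{L}^2(X_\infty)}\|\varrho^{-2} P_\infty(\lambda) u\|_{\mathcal{L}^2(X_\infty)} \lesssim |\lambda| \|u\|_{\mathcal{L}^2(X_\infty)}\| P_\infty(\lambda) u\|_{\mathcal{L}^2(X_\infty)}$ (absorbing the bounded weight $\varrho^{-2}$ and matching the measures $A\, dS_t$ versus $s\, dS_t = r^{-1}dS_t$, which differ by bounded factors), one absorbs the $|\lambda|\|u\|_{\mathcal{L}^2}$ factor into the left-hand side, leaving $|\lambda|^2\|u\|_{\mathcal{L}^2}^2 + |\lambda|\,\|u\|_{\Sob^1}^2 \lesssim (\Im\lambda)^{-1}|\lambda|\cdot(\text{as above})$, hence after taking square roots the claimed bound with the constant $C_0$ governing how large $|\lambda|$ must be for the zeroth-order and boundary corrections to be absorbed. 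I expect the main obstacle to be bookkeeping: precisely tracking the powers of $s$ and the weights $A$, $\bar A$, $\varrho^2$ so that every term genuinely lands in $\mathcal{L}^2(X_\infty)$ or $\Sob^1(X_\infty)$ as defined, and verifying that the density argument of Lemma \ref{lem:invertible1} (extending from $\mathcal{F}_\nu$ to the full space) still goes through here — but since this is the direct analogue of \cite[Lemma 4.6]{gannot:2014:kerr} with the conformal-boundary geometry added, and all the geometric facts ($T$ timelike, orthogonal intersection, $\bar A_\varepsilon \to \bar A|_{s=0}$) are already in place, the argument is routine once the identity \eqref{eq:twisteddivergence} is in hand.
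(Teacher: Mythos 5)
Your proposal is correct and follows essentially the same route as the paper: apply \eqref{eq:twisteddivergence} to $v=e^{-i\lambda t^\star}u$, use that $T$ and $\bar N_t$ are timelike in the same lightcone up to $\mathcal{I}$ to get coercivity of the slice integral over $\Im\lambda\,(|\lambda|^2\|u\|^2_{\mathcal{L}^2}+\|u\|^2_{\Sob^1})$, absorb the Robin boundary term via the trace inequality, and close with Cauchy--Schwarz and absorption on the right-hand side. The one bookkeeping point to fix is your claim that $A\,dS_t$ and $s\,dS_t$ differ by bounded factors (they do not, since $A=s^{-1}\bar A$); it is precisely the factor $\varrho^{-2}\sim s^2$ in $F$ that converts $A\,dS_t$ into a bounded multiple of the $\mathcal{L}^2(X_\infty)$ measure, exactly as in the paper's estimate of the integrand $-s^{-4}\varrho^{-2}\bar A\,\Im(\overline{\lambda u}\cdot f)$.
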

\begin{proof}
	The proof is close to that of \cite[Lemma 4.6]{gannot:2014:kerr}. Apply \eqref{eq:twisteddivergence} to a function $v = e^{-i\lambda t^\star}u$, where $u \in \mathcal{F}_\nu(X_\infty)$. Since $T$ and $\bar{N}_t$ are timelike in the same lightcone and are smooth up to $\mathcal{I}$, the first integral in \eqref{eq:twisteddivergence} controls
	\[
	\Im \lambda \left( 	|\lambda|^2\| u \|^2_{\mathcal{L}^2(X_\infty)} + \| u \|^2_{\Sob^1(X_\infty)} \right)
	\]
	for $\Im \lambda > 0$ and $|\lambda|$ sufficiently large. Let $f = P_\infty(\lambda)u$, and write the integral on the right hand side of \eqref{eq:twisteddivergence} in terms of the conformal measure; the integrand is therefore
	\[
	-s^{-4} \varrho^{-2}  \bar{A} \Im \left( \OL{\lambda u} \cdot f \right).
	\]
	From the Cauchy--Schwarz inequality $2ab \leq \delta^{-1}a^2 +\delta b^2$, this quantity is bounded by
	\begin{equation} \label{eq:petertopaul}
	-s^{-4} \varrho^{-2} \bar{A} \Im \left( \OL {\lambda u} \cdot f \right) \leq \frac{s^{-4} \varrho^{-2}}{2\delta\Im\lambda}\cdot |f|^{\,2} + \frac{\delta s^{-4} \varrho^{-2} \Im \lambda\, |\lambda|^2}{2} \bar{A} \cdot |u|^2.
	\end{equation}
	Now integrate \eqref{eq:petertopaul} over $X_\infty$ with respect to $d\bar{S}_t$, recalling that $\varrho^{-2} \sim s^2$. The integral of the first term on the right hand side of \eqref{eq:petertopaul} is bounded by a constant multiple of $(\Im\lambda)^{-1}\| f \|^2_{\mathcal{L}^2(X_\infty)}$, while the integral of the second term can be absorbed into the left hand side for $\delta>0$ sufficiently small.
	
	It remains to handle the integral over $Y$. If $u$ satisfies Dirichlet boundary condition (which recall is automatic for $\nu \geq 1$), then this term vanishes. Otherwise, if $\nu \in (0,1)$ and $B = \gamma_+ +\beta \gamma_-$, then the integrand becomes (after accounting for the minus sign in \eqref{eq:twisteddivergence})
	\begin{equation} \label{eq:boundarycontribution}
	 2 \Im \lambda \int_{Y} \beta \cdot |\gamma_- u|^2  \,\bar{A} \,d\bar{\mathcal{K}}_0.
	\end{equation}
	If $\beta$ is nonnegative this term can be dropped. Otherwise, for each $\delta > 0$ there exists $C_\delta>0$ such that
	\[
	\int_{Y} |\gamma_- u|^2  \,d\bar{\mathcal{K}}_0 \leq \delta\| u \|^2_{\Sob^1(X_\infty)} + C_\delta \| u \|^2_{\mathcal{L}^2(X_\infty)}.
	\]
	This is proved directly in \cite[Lemma B.1]{warnick:2015:cmp}, and also follows from \cite[Lemma 3.2]{gannot:bessel}. Hence the boundary term can always be absorbed by the left hand side for any $\Im \lambda > 0$ and $|\lambda|$ sufficiently large.
	\end{proof}

Now define the space
\begin{equation*}
\mathcal{X}_\infty = \begin{cases}  u \in \Sob^1(X_\infty): P_\infty(0)u \in \mathcal L^2(X_\infty)  &\text{ if $\nu \geq 1$,} \\[2pt]
 u \in \Sob^1(X_\infty): P_\infty(0)u \in \mathcal L^2(X_\infty) \text{ and } Bu = 0  &\text{ if $\nu \in (0,1)$,}\end{cases}
\end{equation*}
equipped with the graph norm --- compare to Section \ref{subsect:QNMs}. Note that $P_\infty(\lambda)$ is elliptic on $X_\infty$ in the sense of standard microlocal analysis, where $X_\infty$ is viewed as a noncompact manifold without boundary. Furthermore, $P_\infty(\lambda)$ is elliptic at $Y$ as a Bessel operator. By elliptic regularity for Bessel operators \cite[Theorem 1 and Lemma 4.13]{gannot:bessel} it is therefore possible to show that all computations in Lemma \ref{lem:twistedinvertible} for $u \in \mathcal{F}_\nu(X_\infty)$ are also valid for $u \in \mathcal{X}_\infty$. In particular,
\begin{equation} \label{eq:infinityestimate}
	|\lambda|\| u \|_{\mathcal{L}^2(X_\infty)} + \| u \|_{\Sob^1(X_\infty)} \leq \frac{C}{\Im \lambda} \| P_\infty(\lambda)u \|_{ \mathcal{L}^2(X_\infty)}
\end{equation}
	for each $u \in \mathcal{X}_\infty$ and $\lambda \in \mathbb{R} \setminus  [-C_0,C_0] + i(0,\infty)$.

In addition, since $B$ is an elliptic boundary condition in the sense of Bessel operators \cite[Section 4.4]{gannot:bessel}, one has the following:

\begin{lem}
	The operator $P_\infty(\lambda) : \mathcal{X}_\infty \rightarrow \mathcal{L}^2(X_\infty)$ is Fredholm of index zero, and is invertible outside arbitrarily small angles about the real axis for $|\lambda|$ sufficiently large.
\end{lem}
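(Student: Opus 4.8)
The plan is to combine three ingredients: the standard elliptic theory for the interior of the noncompact manifold $X_\infty$, the Bessel elliptic boundary theory of \cite{gannot:bessel} at $Y$, and the coercive estimate \eqref{eq:infinityestimate} already established for $u \in \mathcal{X}_\infty$. First I would observe that $P_\infty(\lambda)$ is a second-order differential operator which is classically elliptic on the interior of $X_\infty$ and elliptic at $Y$ in the sense of Bessel operators \cite[Section 1.5]{gannot:bessel}, with $B$ an elliptic boundary condition in the sense of \cite[Section 4.4]{gannot:bessel}. The Fredholm property then follows by the general machinery for Bessel operators --- one constructs a parametrix near $Y$ from \cite[Theorem 1, Lemma 4.13]{gannot:bessel} and a standard interior parametrix away from $Y$, patches them with a partition of unity, and uses that the remainder is compact as an operator on the relevant weighted spaces (the compactness at $r\to\infty$ is where the Bessel structure and the weights in $\mathcal{L}^2(X_\infty)$, $\Sob^1(X_\infty)$ are used; compactness on the interior is classical Rellich). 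This realizes $P_\infty(\lambda) : \mathcal{X}_\infty \to \mathcal{L}^2(X_\infty)$ as a Fredholm operator, and its index is independent of $\lambda$ since $P_\infty(\lambda)$ depends analytically on $\lambda$ and Fredholm index is locally constant.

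To compute the index, I would let $|\lambda| \to \infty$ outside a small sector about the real axis. The estimate \eqref{eq:infinityestimate} (or its obvious variant for $\Im\lambda < 0$, obtained by taking adjoints / reversing the sign in the divergence identity \eqref{eq:twisteddivergence}, exactly as in \cite[Lemma 4.6]{gannot:2014:kerr}) shows that $P_\infty(\lambda)$ is injective with closed range there, and the corresponding estimate for the formal adjoint --- which has the same structure, again because $B$ is self-adjoint and \eqref{eq:twisteddivergence} is symmetric in its treatment of $T$ --- shows surjectivity. Hence $P_\infty(\lambda)$ is invertible for such $\lambda$, so its Fredholm index is zero; by the constancy of the index this holds for all $\lambda$ in the region under consideration. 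The invertibility for $|\lambda|$ large outside a sector is then simply a restatement of this bijectivity.

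The main obstacle is the boundary analysis at $Y$: one must make sure that the Bessel-operator framework of \cite{gannot:bessel} genuinely applies to $P_\infty(\lambda)$ --- i.e.\ that the orthogonal-intersection condition on $\mathcal{I}$ (guaranteed by the choice $f_+(r) = (a^2-1)/(r^2+1)$, as noted earlier) makes $P_\infty(\lambda)$ a Bessel operator, that the indicial roots are $3/2\pm\nu$ with the regularity $\gamma_\pm u \in H^{\mp\nu}(Y)$, and that $B \in \{\gamma_-,\ \gamma_+ + \beta\gamma_-\}$ is elliptic in the sense of \cite[Section 4.4]{gannot:bessel}. All of this has been set up in Section \ref{subsect:QNMs} and in \cite{gannot:bessel}; the remaining work is bookkeeping to transfer those statements from $X_\delta$ to $X_\infty$, using that $P_\infty(\lambda)$ agrees with $P(\lambda)$ near $Y$ and that $X_\infty$ carries no other boundary. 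The interior ellipticity and the large-$\lambda$ invertibility are then routine given \eqref{eq:infinityestimate}.
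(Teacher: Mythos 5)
Your proposal is correct, and its Fredholm half is exactly the paper's argument: interior ellipticity plus Bessel ellipticity at $Y$ with $B$ an elliptic boundary condition in the sense of \cite[Section 4.4]{gannot:bessel}. Where you diverge is the invertibility/index-zero half. The paper does not use the energy identity here at all: since $dt^\star$ is everywhere timelike for $g_\infty$, the semiclassical principal symbol of $P_{h,\infty}(z)$ is elliptic (through fiber infinity, and uniformly up to $Y$ as a Bessel operator) whenever $z$ ranges over a compact subset of $\mathbb{C}\setminus\mathbb{R}$; this purely symbolic argument, via \cite[Section 2.2]{gannot:bessel}, gives invertibility for $|\lambda|$ large in sectors of \emph{both} half-planes at once --- that is the point of the remark that ``only ellipticity is used, so there is no restriction on the sign of $\Im\lambda$'' --- and index zero then follows from invertibility at one point plus constancy of the index of the analytic family. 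Your route instead extends the multiplier estimate of Lemma \ref{lem:twistedinvertible} to $\Im\lambda<0$ and invokes duality. This does work for the model operator: the identity \eqref{eq:twisteddivergence} is exact, the positivity of $\widetilde{\mathbb{T}}(T,\bar N_t)$ and the trace-inequality absorption of the Robin term are insensitive to the sign of $\Im\lambda$, so one gets $|\Im\lambda|$ in place of $\Im\lambda$ in \eqref{eq:infinityestimate} (this is special to $X_\infty$; on $X_\delta$ the horizon flux term fixes a time direction). Two small points you should make explicit if you carry this out: surjectivity requires identifying the cokernel with the kernel of the adjoint problem posed with the \emph{same} self-adjoint boundary condition on the analogous $\mathcal{X}_\infty$-type domain, which again leans on the Bessel elliptic regularity; and the estimate should be arranged for $|\lambda|$ large in a sector rather than only for $|\Re\lambda|$ large, so that it covers the nearly-imaginary directions in the statement. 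The trade-off is clear: the paper's ellipticity argument is shorter and needs no positivity structure, while yours yields in addition the quantitative $C/(|\lambda|\,|\Im\lambda|)$ resolvent bound of \eqref{eq:infinityinvertible}, which the paper anyway derives separately from the upper half-plane estimate.
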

Because only ellipticity (in the semiclassical sense) is used, there is no restriction on the sign of $\Im \lambda$ \cite[Section 2.2]{gannot:bessel}. As a corollary of \eqref{eq:infinityestimate}, the operator $R_\infty(\lambda) := P_\infty(\lambda)^{-1}$ exists for $\lambda \in \mathbb{R}\setminus[-C_0,C_0] + i(0,\infty)$. In terms of the semiclassical rescaling, there exists $C>0$ such that for each $[a,b] \subseteq (0,\infty)$,
\begin{equation} \label{eq:infinityinvertible}
\| R_{h,\infty}(z) \|_{\mathcal{L}^2(X_\infty)\rightarrow \Sob^1_h(X_\infty)} \leq \frac{C}{\Im z}
\end{equation}
for $z \in [a,b] + i(0,\infty)$ and $h$ sufficiently small.

\subsection{Construction of a global approximate inverse} \label{subsect:approximateglobal}

Combining the results of Sections \ref{subsect:approximateevent}, \ref{subsect:approximateinfinity}, it is now possible to construct a global approximate inverse for $P(\lambda)$ on $X_\delta$. Choose a smooth partition of unity $\chi_1 + \chi_2 = 1$ and functions $\psi_1,\psi_2$ on $X_\delta$ such that
\begin{itemize} \itemsep6pt
	\item $\supp \chi_1\cup \supp \psi_1 \subseteq \{ r > R/2 \}$ and $\supp \chi_2\cup \supp \psi_2 \subseteq \{ r < R\}$,
	\item $\psi_1 = 1$ near $\supp \chi_1$ and $\psi_2 = 1$ near $\supp \chi_2$. 
\end{itemize}
Fix $M > 0$ such that \eqref{eq:horizoninvertible} holds. Then, choose $[C_-,C_+] \subseteq (-\varkappa/2,\infty)$ such that $M \in (C_-, C_+)$ and $|C_+ - M| \geq |C_- - M|$, increasing $C_+$ if necessary.

Given an interval $[a,b] \subseteq (0,\infty)$, the operator $\mathcal{R}_{h,+}(z)$, defined in Proposition \ref{prop:nontrapping}, exists for $z \in [a,b] + ih[C_-,C_+]$. Similarly, $R_{h,+}(z_0)$ and $R_{h,\infty}(z_0)$ exist for $z_0 \in [a,b] + ih[M,C_+]$. Define
\begin{align*}
E(z,z_0) &= \psi_1 R_{h,\infty}(z_0) \chi_1\\ &+ \psi_2 \left( \mathcal{R}_{h,+}(z) - iR_{h,+}(z_0)Q \mathcal{R}_{h,+}(z) \right)\chi_2,
\end{align*}
where $z \in [a,b] + ih[C_-,C_+]$ and $z_0 \in [a,b] + ih[M,C_+]$; here $Q$ is the absorbing operator from Section \ref{subsect:approximateevent}. This is a well defined operator $\mathcal{L}^2(X_\delta) \rightarrow \mathcal{X}(X_\delta)$ in view of the cutoffs, and it is holomorphic in $z$ for each $z_0$.

Apply $P_h(z)$ to $E(z,z_0)$ on the left: the first term yields
\begin{equation} \label{eq:K1}
\chi_1 + \psi_1 \left( P_{h,\infty}(z) - P_{h,\infty}(z_0) \right)R_{h,\infty}(z_0)\chi_1 + [P_h(z),\psi_1]R_{h,\infty}(z_0)\chi_1,
\end{equation} 
while the second term yields
\begin{multline} \label{eq:K2}
\chi_2 - i\psi_2 \left(P_{h,+}(z) - P_{h,+}(z_0) \right) R_{h,+}(z_0) Q \mathcal{R}_{h,+}(z)\chi_2  \\ + [P_h(z),\psi_2]\left( \mathcal{R}_{h,+}(z) - iR_{h,+}(z_0)Q \mathcal{R}_{h,+}(z) \right)\chi_2.
\end{multline}
Adding \eqref{eq:K1}, \eqref{eq:K2}, one has
\[
P_h(z) E(z,z_0) = I + K_1(z,z_0) + K_2(z,z_0) + K_3(z,z_0) + K_4(z,z_0),
\]
where $K_1,\,K_2$ are the second and third terms in \eqref{eq:K1}, and $K_3,\,K_4$ are the second and third terms in \eqref{eq:K2}. Also let $K = K_1 + K_2 + K_3 + K_4$.
\begin{lem} \label{lem:commutator}
	There exist compactly supported pseudodifferential operators 
	\[
	A(z) \in h\Psi^{-1}_{h}(X_+),\quad  B(z) \in h\Psi_{h}^{-\infty}(X_+)
	\]
	 depending smoothly on $z$ such that
	\[
	[P_h(z),\psi_2]\mathcal{R}_{h,+}(z) = A(z) + B(z)\mathcal{R}_{h,+}(z)
	\]	
	for $z \in [a,b] + ih[C_-,C_+]$.
\end{lem}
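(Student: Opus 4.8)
The plan is to analyze the commutator $[P_h(z),\psi_2]$ directly and peel off its contribution to $\mathcal{R}_{h,+}(z)$ via a microlocal parametrix away from the radial sets $L_\pm$. First I would observe that $[P_h(z),\psi_2]$ is a first-order semiclassical differential operator with an extra factor of $h$ coming from the commutator, so $[P_h(z),\psi_2] \in h\,\mathrm{Diff}^1_h(X_+)$, and its full symbol is supported in $\supp(d\psi_2)$, which is a compact subset of $\{R/2 < r < R\}$. Crucially, $\psi_2 = 1$ near $\supp \chi_2$ and — because of how $R$ was chosen (with $T$ timelike for $g$ near $\{r > R/10\}$) — the set $\supp(d\psi_2)$ lies in the region where $P_{h,+}(z)$ is elliptic in the standard (non-semiclassical) sense: it avoids the ergoregion and hence avoids $\widehat\Sigma$, and in particular avoids $L_\pm$ and the absorbing region where $Q$ is supported.

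The key step is then an elliptic parametrix construction microlocalized to (a neighborhood of) $\supp(d\psi_2)$ but cut off at fiber infinity only where $P_{h,+}(z)$ is elliptic. Concretely, I would choose $B(z) = \mathrm{Op}_h(b) \in h\Psi^{-\infty}_h(X_+)$ with $b$ compactly supported in $T^*X_+$, equal to (a multiple of) the symbol of $[P_h(z),\psi_2]$ on the compact part of phase space over $\supp(d\psi_2)$, and arrange that $[P_h(z),\psi_2] - B(z)$ is, microlocally on the characteristic set of $p_+$ over $\supp(d\psi_2)$, elliptic — this works because the part of $\widehat\Sigma$ lying over $\supp(d\psi_2)$ is empty and the part of the finite characteristic set $\Sigma \cap T^*X_+$ over that region is compact and disjoint from $L_\pm$ and from $\WF_h(Q)$. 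Write $G(z) \in h\Psi^{-1}_h(X_+)$ for a microlocal right parametrix of $P_{h,+}(z) - iQ$ on that region, so that $G(z)(P_{h,+}(z)-iQ) = I + r(z)$ microlocally there with $r(z)$ negligible; then $A(z) := ([P_h(z),\psi_2] - B(z))\,G(z) \in h\Psi^{-1}_h(X_+)$ gives $A(z)(P_{h,+}(z)-iQ) = [P_h(z),\psi_2] - B(z)$ up to a negligible (i.e.\ $\mathcal O(h^\infty)$-smoothing, compactly supported) remainder which can be absorbed into $B(z)$. Applying this identity to $\mathcal{R}_{h,+}(z) = (P_{h,+}(z)-iQ)^{-1}$ yields $[P_h(z),\psi_2]\mathcal{R}_{h,+}(z) = A(z) + B(z)\mathcal{R}_{h,+}(z)$, as desired. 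Smooth dependence on $z$ follows since $p_+$, $Q$, $P_{h,+}(z)$ and the parametrix construction all depend smoothly (indeed holomorphically, in the real-$z$ reduction) on $z$.

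The main obstacle is verifying that the support of $d\psi_2$ genuinely avoids the characteristic set at fiber infinity and the sink/source $L_\pm$ — i.e.\ that on $\supp(d\psi_2)$ one is in the ellipticity region of the non-semiclassical operator, so that the parametrix $G(z)$ exists as a bona fide element of $h\Psi^{-1}_h$ rather than requiring the propagation/radial-point machinery of Vasy. This is where the choice of $R$ (made in Lemma \ref{lem:adm}, with $T$ timelike near $\{r > R/10\}$, and the cutoffs chosen so $\supp \psi_2 \subseteq \{r < R\}$ while $\supp(d\psi_2)$ sits inside $\{R/2 < r < R\}$, well inside the region where $T$ is timelike) does the work: ellipticity of $p_+$ near fiber infinity over that region is exactly the statement from Section \ref{subsect:microlocal} that the projection of $\widehat\Sigma$ lies in the ergoregion. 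A secondary, more routine point is keeping careful track of the semiclassical orders so that the commutator's extra $h$ is preserved through composition with $G(z)$, giving $A(z) \in h\Psi^{-1}_h$ and $B(z) \in h\Psi^{-\infty}_h$ rather than $\Psi^{0}_h$ and $\Psi^{-\infty}_h$.
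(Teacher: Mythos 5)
Your argument is correct and is essentially the paper's own proof: both hinge on the fact that for $R$ large $P_{h,+}(z)$ is elliptic at fiber infinity over $\supp d\psi_2$, build an elliptic microlocal parametrix there to produce $A(z)$, and absorb the compactly microlocalized piece covering the finite-frequency characteristic set (the paper's operator $G$, your $B(z)$) together with the $\mathcal{O}(h^\infty)$ parametrix remainder into the $B(z)\mathcal{R}_{h,+}(z)$ term. The differences are cosmetic --- the paper writes the parametrix identity for $P_{h,+}(z)$ with cutoffs $\varphi,\varphi'$ rather than for $P_{h,+}(z)-iQ$ --- and the one slip is harmless: your $G(z)$ lies in $\Psi^{-2}_h(X_+)$ (like the paper's $F(z)$), not $h\Psi^{-1}_h(X_+)$, though the composed order $A(z)\in h\Psi^{-1}_h(X_+)$ still comes out as required.
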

\begin{proof}
	This is a basic consequence of the semiclassical calculus. The commutator $[P_{h,+}(z),\psi_2]$ has coefficients supported near $\supp d\psi_2$, and if $R$ is sufficiently large, then $P_{h,+}(z)$ is elliptic near $\supp d\psi_2$ lifted to fiber infinity $\partial \OL{T^*}X_+$ by the projection $\partial \OL{T^*} X_+ \rightarrow X_+$. Choose $\varphi,\varphi' \in C_c^\infty(X_+)$ satisfying $\varphi = 1$ near $\supp d\psi_2$ and $\varphi'= 1$ near $\supp \varphi$. By choosing $\varphi'$ with sufficiently small support it may be assumed that $P_{h,+}(z)$ is elliptic near $\supp \varphi$ lifted to $\partial\OL{T^*}X_+$. By the parametrix construction \cite[Proposition E.31]{zworski:resonances}, there exist properly supported operators 
	\[
	F(z) \in \Psi^{-2}_h(X_+), \quad Y(z) \in h^\infty\Psi^{-\infty}_h(X_+)
	\]
	and a compactly supported $G \in \Psi^{\mathrm{comp}}_{h}(X_+)$ such that
	\[
	\varphi = F(z) \varphi' P_{h,+}(z) + Y(z) + G
	\]
	Here the role of $G$ is to ensure that $P_{h,+}(z)$ is semiclassically elliptic on $\mathrm{WF}_h(\varphi - G)$, while $Y(z)$ is the usual parametrix remainder. The operators $F(z), \, Y(z)$ may be chosen to depend smoothly on $z$, and $G$ can be chosen uniformly for $z$ in a compact set. Then,
	\begin{align*}
	[P_h(z),\psi_2]\mathcal{R}_{h,+}(z) & =  [P_h(z),\psi_2]\varphi\mathcal{R}_{h,+}(z) \\ &= [P_h(z),\psi_2]\left(F(z)\varphi' + (Y(z)+G)\mathcal{R}_{h,+}(z)\right).
	\end{align*}
	Since the commutator lies in $h\Psi^{1}_h(X_+)$, it suffices to define $A(z) = [P_h(z),\psi_2]F(z)\varphi'$ and $B(z) = [P_h(z),\psi_2](Y(z) + G)$.
\end{proof}

\begin{lem} \label{lem:blackbox}
	If $M > 0$ is sufficiently large, then there exists $h_0>0$ such that the following hold for $z \in [a,b] + ih[C_-,C_+]$ and $z_0 \in [a,b]+ih[M,C_+]$.
	\begin{enumerate} \itemsep6pt
		\item $K(z,z_0): \mathcal{L}^2(X_\delta) \rightarrow \mathcal{L}^2(X_\delta)$ is compact.
		\item $I + K(z_0,z_0)$ is invertible for $h \in (0,h_0)$.
	\end{enumerate}
\end{lem}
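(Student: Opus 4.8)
For compactness, I would examine each of the four operators $K_1,\dots,K_4$ in turn. The operators $K_1$ and $K_3$ have the form $\psi_i\left(P_{h,\bullet}(z)-P_{h,\bullet}(z_0)\right)\cdot(\text{something})$. Since $P_h$ depends polynomially on $z$ (it is obtained by substituting $-\lambda=z/h$ into a second-order operator), the difference $P_{h,\bullet}(z)-P_{h,\bullet}(z_0)$ is a \emph{first-order} differential operator whose coefficients are supported where the cutoffs $\psi_1$ or the compactly supported $Q\mathcal R_{h,+}(z)$ live. Composed with the mapping into $\mathcal X(X_\delta)\hookrightarrow \Sob^1$, this is an operator that gains a derivative and is localized to a region whose closure in $\OL X_\delta$ is compact and disjoint from $Y$; hence by Rellich it is compact on $\mathcal L^2(X_\delta)$. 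For $K_2$ and $K_4$, which involve the commutators $[P_h(z),\psi_i]$, I would invoke Lemma~\ref{lem:commutator}: modulo the smoothing remainder $B(z)\mathcal R_{h,+}(z)$, which maps $\mathcal L^2$ into $h\Psi^{-\infty}_h$ applied to $\Sob^1$ and is therefore compact by Rellich, the main term is $A(z)\varphi'\chi_i$ with $A(z)\in h\Psi^{-1}_h(X_+)$ compactly supported and compactly microlocalized; a compactly microlocalized $h$-pseudodifferential operator of negative order is compact on $L^2$. The same reasoning handles $K_2$ since $Q\in\Psi^{\mathrm{comp}}_h$ is itself compactly microlocalized. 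Hence $K(z,z_0)$ is a sum of compact operators, proving item~(1).

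For item~(2), I would take $z=z_0\in[a,b]+ih[M,C_+]$ and show $\|K(z_0,z_0)\|_{\mathcal L^2\to\mathcal L^2}<1$ for $M$ large and $h$ small, so that $I+K(z_0,z_0)$ is invertible by Neumann series. Estimating term by term: in $K_1=\psi_1\big(P_{h,\infty}(z_0)-P_{h,\infty}(z_0)\big)R_{h,\infty}(z_0)\chi_1$ and $K_3$, when $z=z_0$ the \emph{difference} vanishes identically, so $K_1=K_3=0$. That leaves only the commutator terms $K_2$ and $K_4$. For these I would use that $[P_h(z),\psi_i]\in h\Psi^1_h$, together with the resolvent bounds \eqref{eq:horizoninvertible}, \eqref{eq:infinityinvertible} and the non-trapping bound of Proposition~\ref{prop:nontrapping}: $\|\mathcal R_{h,+}(z)\|_{\mathcal L^2\to H^1_h}\le Ch^{-1}$ and $\|R_{h,+}(z_0)\|_{\mathcal L^2\to H^1_h}\le C/\Im z_0\le C/(Mh)$. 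A factor of $h$ from the commutator cancels the $h^{-1}$ from $\mathcal R_{h,+}$, so $\|[P_h(z),\psi_2]\mathcal R_{h,+}(z)\|_{\mathcal L^2\to\mathcal L^2}$ is bounded by a constant $C$ independent of $M$, and the extra $R_{h,+}(z_0)Q$ contributes a factor $\le C/M$ (again absorbing the commutator's $h$ into $\mathcal R_{h,+}$). Thus $\|K_2(z_0,z_0)\|\le C/M$ and $\|K_4(z_0,z_0)\|\le C/M + Ch$ (the second summand of $K_4$ being the $-iR_{h,+}Q\mathcal R_{h,+}$ piece, which carries the extra $1/M$). Choosing $M$ large and then $h_0$ small makes the total norm $<1$.

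I expect the main obstacle to be bookkeeping the mapping properties precisely enough in item~(2): one must verify that each commutator really does produce the saving factor of $h$ that cancels the $h^{-1}$ loss in the non-trapping estimate, and that the remaining constant is genuinely uniform in $M$ (so that the $1/M$ factors from the extra $R_{h,+}(z_0)Q$ can be made small independently). A subtle point is that the commutator $[P_h(z),\psi_2]$ must be applied on the correct side and paired with $\mathcal R_{h,+}(z):\mathcal L^2\to H^1_h$, not $\mathcal R_{h,+}(z):\mathcal L^2\to\mathcal L^2$, to extract the derivative that the commutator needs; Lemma~\ref{lem:commutator} is precisely the tool that makes this rigorous, so I would lean on it heavily rather than re-deriving the estimates by hand. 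The compactness in item~(1) is comparatively routine once one notes everything is either localized away from the conformal boundary $Y$ (so Rellich applies) or compactly microlocalized (so negative-order pseudodifferential operators are compact).
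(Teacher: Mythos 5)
Your proposal has two genuine gaps, one in each item. The decisive one is in item (2). At $z=z_0$ you correctly observe $K_1=K_3=0$, but your estimate of the remaining horizon term does not close: by your own accounting, $[P_h(z_0),\psi_2]\mathcal{R}_{h,+}(z_0)\chi_2$ is only $\mathcal{O}(1)$ (an $\mathcal{O}(h)$ commutator against the non-trapping bound $\|\mathcal{R}_{h,+}(z_0)\|_{L^2\rightarrow H^1_h}\leq Ch^{-1}$ of Proposition \ref{prop:nontrapping}), yet you then record $\|K_4(z_0,z_0)\|\leq C/M+Ch$, silently upgrading that $\mathcal{O}(1)$ piece to something small; with only an $\mathcal{O}(1)$ bound the Neumann series argument fails. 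No bound of the form $C/\Im z_0$ is available for $\mathcal{R}_{h,+}(z_0)$ to rescue this, since $Q=Q_++Q_-$ is not sign-definite. The missing idea is algebraic: because $\mathcal{R}_{h,+}(z_0)=(P_{h,+}(z_0)-iQ)^{-1}$ and $R_{h,+}(z_0)=P_{h,+}(z_0)^{-1}$, one has $\mathcal{R}_{h,+}(z_0)-iR_{h,+}(z_0)Q\mathcal{R}_{h,+}(z_0)=R_{h,+}(z_0)\left(P_{h,+}(z_0)-iQ\right)\mathcal{R}_{h,+}(z_0)=R_{h,+}(z_0)$, so that $K(z_0,z_0)=[P_h(z_0),\psi_1]R_{h,\infty}(z_0)\chi_1+[P_h(z_0),\psi_2]R_{h,+}(z_0)\chi_2$ as in \eqref{eq:atpoint}. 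Both resolvents then obey the energy estimates \eqref{eq:horizoninvertible}, \eqref{eq:infinityinvertible} with $\Im z_0\geq Mh$, and the $\mathcal{O}(h)$ commutators give $\|K(z_0,z_0)\|=\mathcal{O}(M^{-1}+h)$. This identity also resolves the uniformity-in-$M$ worry you raise yourself: since $K(z_0,z_0)$ no longer involves $\mathcal{R}_{h,+}$, its bound does not depend on the choice of $[C_-,C_+]$, so $M$ can be fixed before $C_\pm$; in your version the constant from Proposition \ref{prop:nontrapping} depends on an interval that must contain $M$, which is circular.

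In item (1) the argument for $K_1$ is incorrect as stated: $\supp\psi_1$ and $\supp\chi_1$ lie in $\{r>R/2\}$, whose closure in $\OL{X}_\delta$ meets the conformal boundary $Y$, so $K_1$ is \emph{not} localized away from $Y$ and interior Rellich does not apply. Compactness here genuinely uses the structure at $Y$: elliptic regularity for Bessel operators shows that $\left(P_{h,\infty}(z)-P_{h,\infty}(z_0)\right)R_{h,\infty}(z_0):\mathcal{L}^2(X_\infty)\rightarrow\Sob^1_h(X_\infty)$ is bounded, and one then invokes compactness of the weighted embedding $\Sob^1_h(X_\infty)\hookrightarrow\mathcal{L}^2(X_\infty)$ (the same fact whose singular values are quoted from \cite{gannot:bessel} in Lemma \ref{lem:singular values}), not Rellich on a compact interior set. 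Two smaller slips: $K_2$ contains no $Q$, and Lemma \ref{lem:commutator} as stated concerns $[P_h(z),\psi_2]\mathcal{R}_{h,+}(z)$, not $[P_h(z),\psi_1]R_{h,\infty}(z_0)$, so for $K_2$ you again need the $\Sob^1_h(X_\infty)\hookrightarrow\mathcal{L}^2(X_\infty)$ route (or a separate interior parametrix argument). Your treatment of $K_3$ and $K_4$, via compactness of operators in $\Psi^{\mathrm{comp}}_h(X_+)$ and Lemma \ref{lem:commutator}, is in line with the paper.
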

\begin{proof}
To prove compactness, consider each term in $K(z,z_0)$ separately. For $K_1, K_2$, the operator $P_{h,\infty}(z)$ is an elliptic Bessel operator (in the non-semiclassical sense) \cite[Section 4.4]{gannot:bessel}. Furthermore, if $\nu \in (0,1)$, then the boundary operator $B$ is elliptic in the sense of \cite[Section 4.4]{gannot:bessel}. First, observe that the difference $P_{h,\infty}(z) - P_{h,\infty}(z_0)$ is of first order. By elliptic regularity \cite[Theorem 1]{gannot:bessel} for Bessel operators, the space $\mathcal{X}_\infty$ has one order of regularity higher than $\Sob^1_h(X_\infty)$, so that
\[
\left( P_{h,\infty}(z) - P_{h,\infty}(z_0) \right) R_{h,\infty}(z_0): \mathcal{L}^2(X_\infty) \rightarrow \Sob^1_h(X_\infty)
\]
is bounded. The inclusion $\Sob^1_h(X_\infty) \hookrightarrow \mathcal{L}^2(X_\infty)$ is compact, which shows that 
\[
K_1(z,z_0) :\mathcal{L}^2(X_\delta) \rightarrow  \mathcal{L}^2(X_\delta) 
\]
is compact. A similar argument also shows that $K_2(z,z_0)$ is compact.

 For $K_3,K_4$, each of the terms containing $Q$ are compact since any compactly supported operator in $\Psi_{h}^{\mathrm{comp}}(X_+)$ is compact. It remains to consider the commutator term $[P_h(z),\psi_2]\mathcal{R}_{h,+}(z)$, where compactness follows from Lemma \ref{lem:commutator}.

 To prove the invertibility statement, notice that for $z = z_0$,
 \begin{equation} \label{eq:atpoint}
 K(z_0,z_0) = [P_h(z_0),\psi_1]R_{h,\infty}(z_0)\chi_1 + [P_h(z_0),\psi_2]R_{h,+}(z_0)\chi_2.
 \end{equation}
 As first order operators, the commutators are of order $\mathcal{O}(h+h \Im z_0)$. By choosing $M>0$ sufficiently large and applying \eqref{eq:horizoninvertible}, \eqref{eq:infinityinvertible}, the operator norm of $K(z_0,z_0)$ is of order $\mathcal{O}(M^{-1} + h)$, hence $I+K(z_0,z_0)$ is invertible by Neumann series for $h$ sufficiently small.
\end{proof}

From now on it will be assumed that $M > 0$ is chosen sufficiently large so that Lemma \ref{lem:blackbox} holds. This can always be achieved \emph{before} selecting $C_\pm$ since \eqref{eq:atpoint} does not involve the operator $\mathcal{R}_{h,+}(z)$.

 Since $K(z,z_0)$ is compact and $I+K(z_0,z_0)$ is invertible for an appropriate choice of $z_0$ with $h$ small, it follows that $(I+K(z,z_0))^{-1}: \mathcal{L}^2(X_\delta)\rightarrow \mathcal{L}^2(X_\delta)$ is a meromorphic family of operators. If $(I+K(z,z_0))^{-1}$ exists, then $P_h(z) : \mathcal{X}(X_\delta) \rightarrow \mathcal{L}^2(X_\delta)$ has a right inverse given by $E(z,z_0)(I+K(z,z_0))^{-1}$. In that case $P_h(z)$ is invertible, since it is of index zero by Theorem \ref{theo:QNF}. Analytic continuation then shows that
\[
R_h(z) = E(z,z_0)(I+K(z,z_0))^{-1}.
\]
Furthermore, any pole of $R_h(z)$ is also a pole of $(I+K(z,z_0))^{-1}$.

\subsection{Singular values} \label{subsect:singular}

In order to prove \eqref{prop:exponentialbound} of Proposition \ref{prop:exponentialbound}, one must bound
\[
\| R_h(z) \|_{\mathcal{L}^2(X_\delta) \rightarrow \mathcal{L}^2(X_\delta)} \\ \leq \| E(z,z_0) \|_{\mathcal{L}^2(X_\delta) \rightarrow \mathcal{L}^2(X_\delta)} \| (I+K(z,z_0))^{-1} \|_{\mathcal{L}^2(X_\delta) \rightarrow \mathcal{L}^2(X_\delta)}.
\]
Using \eqref{eq:horizoninvertible}, \eqref{eq:infinityinvertible} and Lemma \ref{lem:nontrapping}, the operator norm of $E(z,z_0)$ is of order $\mathcal{O}(h^{-2})$, which will be harmless compared to the exponentially growing bound on the norm of $(1+K(z,z_0))^{-1}$. 
\begin{lem} [{\cite[Theorem V.5.1]{gohberg1969introduction}}] \label{lem:gohberg}
	If $\mathcal{Z}$ is a Hilbert space, then 
	\[
	\|(I+A)^{-1}\|_{\mathcal{Z}\rightarrow \mathcal{Z}} \leq \frac{\det(I+|A|)}{|\det(I+A)|}
	\]
	for any operator $A:\mathcal{Z} \rightarrow\mathcal{Z}$ of trace class.
\end{lem}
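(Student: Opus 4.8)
The statement is the classical Gohberg--Krein bound and in the paper it is simply cited from \cite{gohberg1969introduction}, so the plan is to indicate how one would prove it rather than reproduce that reference in full. First I would dispose of the degenerate case: if $I+A$ is not invertible then $\det(I+A)=0$, because for $A$ in the trace ideal the Fredholm determinant vanishes exactly when $-1$ is an eigenvalue of $A$; reading the right-hand side as $+\infty$ in that case, there is nothing to prove, so one may assume $I+A$ invertible. The strategy is then to establish the inequality in finite dimensions by an elementary singular-value computation, and to recover the general case by truncation.

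For the finite-dimensional step, take $\mathcal{Z}=\mathbb{C}^n$ and let $s_1\ge\cdots\ge s_n>0$ denote the singular values of $I+A$ and $\mu_1\ge\cdots\ge\mu_n\ge 0$ those of $A$ (equivalently, the eigenvalues of $|A|$). I would use two standard facts: that $\|(I+A)^{-1}\|=s_n^{-1}$ and $|\det(I+A)|=\prod_{j=1}^n s_j$, which together give
\[
\|(I+A)^{-1}\|=\frac{1}{|\det(I+A)|}\prod_{j=1}^{n-1}s_j,
\]
and the Weyl-type subadditivity inequality $s_j(B+C)\le s_1(B)+s_j(C)$, which with $B=I$ and $C=A$ yields $s_j\le 1+\mu_j$ for each $j$. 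Hence $\prod_{j=1}^{n-1}s_j\le\prod_{j=1}^{n-1}(1+\mu_j)\le\prod_{j=1}^n(1+\mu_j)=\det(I+|A|)$, and combining this with the previous display proves the bound when $\dim\mathcal{Z}<\infty$.

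To pass to the general case I would take an increasing sequence of finite-rank orthogonal projections $P_n\to I$ strongly, set $A_n=P_nAP_n$ regarded as an operator on the finite-dimensional space $P_n\mathcal{Z}$ (extended by the identity on its complement, where it affects neither side of the inequality), apply the finite-dimensional bound to each $A_n$, and let $n\to\infty$. Here $\|A_n-A\|_1\to 0$, so using continuity of the Fredholm determinant and of the singular values on the trace ideal one gets $\det(I+A_n)\to\det(I+A)\neq 0$ and $\det(I+|A_n|)\to\det(I+|A|)$, while $I+A_n$ is invertible for $n$ large with $(I+A_n)^{-1}\to(I+A)^{-1}$ in operator norm. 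I expect this approximation step --- specifically the trace-norm continuity of $A\mapsto\det(I+A)$ and of the singular values $s_j(A)$, and the fact that the limiting determinant stays nonzero so that the denominators do not degenerate --- to be the only point requiring care; all of it is classical, so there is no real obstacle. A more conceptual route, taken in \cite{gohberg1969introduction}, is to observe that $D(A):=\det(I+A)\,(I+A)^{-1}$ extends to an entire operator-valued function of $A$ on the trace ideal whose norm is bounded by $\det(I+|A|)$ (via the series expansion of $D(A)$ in exterior powers $\Lambda^k A$ together with the estimate $\|\Lambda^k A\|_1\le e_k(\mu_1,\mu_2,\dots)$ for the elementary symmetric functions of the singular values), and then to divide by $|\det(I+A)|$.
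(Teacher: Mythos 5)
The paper offers no proof of this lemma at all --- it is quoted directly from Gohberg--Krein and used as a black box --- so your write-up is a genuine addition rather than a parallel to an argument in the text, and it is correct. The finite-dimensional step is exactly right: $\|(I+A)^{-1}\|=s_n(I+A)^{-1}$, $|\det(I+A)|=\prod_j s_j(I+A)$, and the Ky Fan/Weyl inequality $s_j(I+A)\le 1+s_j(A)$ combine with $\det(I+|A|)=\prod_j(1+s_j(A))$ to give the bound. In the truncation step the points you flag are indeed the only delicate ones, and they go through; I would only suggest proving $\det(I+|A_n|)\to\det(I+|A|)$ not via trace-norm continuity of $A\mapsto|A|$ (which is a subtle question precisely at the $p=1$ endpoint) but directly through singular values: $s_j(A_n)\to s_j(A)$ pointwise, $\sum_j s_j(A_n)\to\sum_j s_j(A)$ since $\bigl|\,\|A_n\|_1-\|A\|_1\bigr|\le\|A_n-A\|_1$, hence $\sum_j|s_j(A_n)-s_j(A)|\to 0$ by a Scheff\'e-type argument and $\sum_j\bigl|\log(1+s_j(A_n))-\log(1+s_j(A))\bigr|\to 0$. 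One small remark on the ``extend by the identity'' device: on $(P_n\mathcal{Z})^\perp$ the inverse has norm $1$, so to conclude the inequality on all of $\mathcal{Z}$ you also use that the right-hand side is automatically $\ge 1$, which follows from $|\det(I+A_n)|=\prod_j s_j(I+A_n)\le\det(I+|A_n|)$; this is immediate but worth stating. Your closing sketch via $D(A)=\det(I+A)\,(I+A)^{-1}$ and the exterior-power estimate (in fact $\|\Lambda^kA\|_1=e_k(s_1(A),s_2(A),\dots)$, summing to $\det(I+|A|)$) is indeed the route taken in the cited reference, so either version would serve as a legitimate replacement for the citation.
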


Lemma \ref{lem:gohberg} cannot be applied directly to $I+K(z,z_0)$ since $K(z,z_0)$ is not of trace class. Instead, $K(z,z_0)$ lies in a Schatten $p$-class for some $p>0$. For a compact operator $A: \mathcal{Z}_1 \rightarrow \mathcal{Z}_2$ between Hilbert spaces, let $s_j(A) = s_j(A; \mathcal{Z}_1,\mathcal{Z}_2), \, j \in \mathbb{N}_{\geq 1} $ denote its singular values counting multiplicity, listed in decreasing order.

\begin{lem} \label{lem:singular values} There exists $C>0$ such that the singular values of $K(z,z_0)$ satisfy
	\[
	s_j(K(z,z_0)) \leq Ch^{-3}\,j^{\,-1/3}
	\]
	uniformly for $z \in [a,b]+ih[C_-,C_+]$ and $z_0 \in [a,b] + ih[M,C_+]$.
\end{lem}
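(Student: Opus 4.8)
The strategy is to estimate the singular values of each of the four pieces $K_1,\dots,K_4$ separately and then combine them via Weyl's inequality for singular values, which gives $s_{j_1+j_2-1}(A+B) \le s_{j_1}(A) + s_{j_2}(B)$; this will turn a collection of bounds of the form $s_j \le C h^{-3} j^{-1/3}$ into a bound of the same shape (with a larger constant) for the sum. So it suffices to bound each $K_i$ individually. The common mechanism is that each $K_i$ factors through a space with one extra order of Sobolev regularity relative to $L^2$: for $K_1, K_2$ the operators $\bigl(P_{h,\infty}(z) - P_{h,\infty}(z_0)\bigr) R_{h,\infty}(z_0)$ and its $K_2$-analogue map $\mathcal{L}^2(X_\infty)$ boundedly into $\Sob^1_h(X_\infty)$ by elliptic regularity for Bessel operators, with operator norm controlled by $\eqref{eq:infinityinvertible}$ (so $\mathcal{O}(h^{-1})$ from the resolvent, times $\mathcal{O}(h^{-1})$ or so from the semiclassical first-order difference, i.e.\ $\mathcal{O}(h^{-2})$ or better after tracking $h$-weights). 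For $K_3$ the terms involving $Q$ are compactly supported and compactly microlocalized, hence map $\mathcal{L}^2$ into any semiclassical Sobolev space with norm controlled by powers of $h^{-1}$ coming from $\eqref{eq:horizoninvertible}$ and the non-trapping bound of Proposition \ref{prop:nontrapping}. For $K_4$, Lemma \ref{lem:commutator} writes $[P_h(z),\psi_2]\mathcal{R}_{h,+}(z) = A(z) + B(z)\mathcal{R}_{h,+}(z)$ with $A(z) \in h\Psi^{-1}_h(X_+)$ and $B(z)\in h\Psi^{-\infty}_h(X_+)$ compactly supported, so again one lands in $H^1_h(X_+)$ (or better) with the $h$-loss coming from $h^{-1}$ in the non-trapping estimate.

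**Counting the singular values.** The key quantitative input is the Weyl-law-type bound on the singular values of the inclusion of a semiclassical Sobolev space into $L^2$ on a (effectively) $3$-dimensional manifold: if $A : \mathcal{L}^2 \to H^1_h$ is bounded with norm $N$, and the target is compactly embedded in $\mathcal{L}^2$ over a region of bounded geometry in dimension $n=3$, then $s_j(A;\mathcal{L}^2,\mathcal{L}^2) \le C N\, h^{-?}\, j^{-1/n} = C N j^{-1/3}$ after absorbing the semiclassical weights; more precisely, $s_j(\iota : H^1_h \hookrightarrow \mathcal{L}^2) \le C (1 + h^2 j^{2/3})^{-1/2}$, which is $\le C$ always and $\le C h^{-1} j^{-1/3}$ for the relevant range, and one combines this with the operator-norm bounds $N = \mathcal{O}(h^{-2})$ to land at $s_j(K_i) \le C h^{-3} j^{-1/3}$. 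The exponent $-1/3$ is dictated by dimension three; the power $h^{-3}$ is the product of the $h^{-2}$ from the resolvent-times-first-order-difference and an extra $h^{-1}$ from converting the $H^1_h \hookrightarrow \mathcal{L}^2$ singular values into the stated form. One should be slightly careful with the two pieces $K_1, K_3$ (near infinity and near the horizon respectively) since they live on the auxiliary manifolds $X_\infty$ and $X_+$; but the cutoffs $\chi_i, \psi_i$ ensure everything is glued back to $X_\delta$ and the singular values of a cutoff-composed operator are dominated by those of the original, so no new difficulty arises there.

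**Main obstacle.** The principal technical point is establishing the correct Weyl-type count for the singular values of the inclusion $H^1_h(X_\infty) \hookrightarrow \mathcal{L}^2(X_\infty)$ near the conformal boundary $Y$: this is not a standard compactly supported semiclassical estimate because of the degenerate (Bessel-type) nature of the weights $s^{\pm\nu}$ and the fact that $\Sob^1(X_\infty)$ is a twisted Sobolev space. One resolves this by invoking the mapping and elliptic-regularity properties of Bessel operators from \cite{gannot:bessel} together with the compactness of the relevant inclusions already used in the proof of Lemma \ref{lem:blackbox}, and then quantifying that compactness by a Weyl law for the Bessel Laplacian (equivalently, a Weyl law for $P_\infty(0)$ acting on $\mathcal{X}_\infty$), which has the same leading order $j^{1/3}$ growth of eigenvalues as the ordinary Laplacian in dimension three. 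Away from $Y$ the estimate is entirely standard. Once the singular-value count is in hand for each piece, assembling the final bound via Weyl's inequality is routine, and I would not belabor the bookkeeping of the precise powers of $h$ beyond confirming that they do not exceed $h^{-3}$.
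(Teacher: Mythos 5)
Your proposal follows essentially the same route as the paper: decompose $K$ into $K_1,\dots,K_4$ and combine via Fan's (Weyl's) inequality, bound each piece as a map from $\mathcal{L}^2$ into a semiclassical first-order Sobolev space with explicit powers of $h$ (using elliptic regularity for Bessel operators near $Y$ and Lemma \ref{lem:commutator} for the commutator term), and then compose with the inclusion whose singular values are $O(h^{-1}j^{-1/3})$ in dimension three. The only cosmetic difference is that the paper simply cites \cite[Appendix B]{gannot:bessel} for the singular values of $\Sob^1_h(X_\infty)\hookrightarrow\mathcal{L}^2(X_\infty)$, whereas you sketch re-deriving that input from a Weyl law for the Bessel Laplacian; the bookkeeping of $h$-powers you leave loose matches the paper's ($h^{-2}$ from $K_3$ times $h^{-1}$ from the inclusion giving the stated $h^{-3}$).
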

\begin{proof}
	By Fan's inequality $s_{i+j-1}(A+B) \leq s_i(A) + s_j(B)$ applied repeatedly,
	\[
	s_j(K(z,z_0)) \leq \sum_{i=1}^4 s_{\lfloor j/4\rfloor}(K_i(z,z_0)).
	\]
	Since the operator norm of $R_{h,\infty}(z_0) : \mathcal{L}^2(X_\infty) \rightarrow \Sob^1_h(X_\infty)$ is of order $\mathcal{O}(h^{-1})$,
	\begin{align*}
\| R_{h,\infty}(z_0)u\|_{\mathcal{L}^2(X_\infty)} + \| P_{h,\infty}(z_0)R_{h,\infty}(z_0)u \|_{\Sob^1(X_\infty)} < Ch^{-1} \| u \|_{\mathcal{L}^2(X_\infty)}
	\end{align*}
	as well. Again by elliptic regularity for Bessel operators, this implies that $K_1(z,z_0) : \mathcal{L}^2(X_\delta) \rightarrow {\Sob}^1_h(X_\infty)$ is of order $\mathcal{O}(h^{-1})$. The inclusion ${\Sob}^1_h(X_\infty) \hookrightarrow \mathcal{L}^2(X_\infty)$ has singular values bounded by $Ch^{-1}j^{-1/3}$ \cite[Appendix B]{gannot:bessel}, so by standard properties of singular values, 
	\[
	s_j(K_1(z,z_0)) \leq Ch^{-2}j^{-1/3}.
	\]
	The same argument applies to $K_2(z,z_0)$, so accounting for the extra power of $h$ coming from the commutator,
	\[
	s_j(K_2(z,z_0)) \leq Ch^{-1}j^{-1/3}.
	\]
	The terms $K_3,K_4$ can be handled similarly, using that the inclusion ${H}^1_h(X_+) \hookrightarrow L^2(X_+)$ has singular values bounded by $Ch^{-1}j^{-1/3}$. The norm of $K_3(z,z_0) : \mathcal{L}^2(X_\delta)\rightarrow H^1_h(X_+)$ is of order $\mathcal{O}(h^{-2})$, so the singular values of $K_3$ give the $h^{-3}$ dependence as in the statement of the lemma. The only term that requires extra care is $[P_{h}(z),\psi_2]\mathcal{R}_{h,+}(z)$ in $K_4$, but by using Lemma \ref{lem:commutator} this operator is seen to map $L^2(X_+) \rightarrow  {H}_h^1(X_+)$ with operator norm of order $\mathcal{O}(1)$.	
\end{proof}

It follows from Lemma \ref{lem:singular values} that $K(z,z_0)^4$ is of trace class, and using Fan's inequality $s_{i+j-1}(AB) \leq s_{i}(A)s_{j}(B)$, one has the estimate
\begin{equation} \label{eq:K^4}
s_j(K(z,z_0)^4) \leq s_{\lfloor j/4\rfloor}(K(z,z_0))^4 \leq Ch^{-12}\,j^{\,-4/3}.
\end{equation}
This is uniform for $z \in [a,b]+ih[C_-,C_+]$ and $z_0 \in [a,b]+ih[M,C_+]$. To apply Lemma \ref{lem:gohberg}, write
\begin{equation} \label{eq:geometric}
(I+K(z,z_0))^{-1} = \left(\sum_{j=0}^3 (-1)^j K(z,z_0)^j \right)(I-K(z,z_0)^4)^{-1}.
\end{equation}
The norm of $K(z,z_0) : \mathcal{L}^2(X_\delta) \rightarrow \mathcal{L}^2(X_\delta)$ is of order $\mathcal{O}(h^{-2})$, so the norm of the sum on the right hand side of \eqref{eq:geometric} is polynomially bounded in $h$. 

Note from \eqref{eq:geometric} that any pole of $(I+K(z,z_0))^{-1}$ is a pole of $(I - K(z,z_0)^4)^{-1}$, hence the poles of $R_h(z)$ are among those of $(I - K(z,z_0)^4)^{-1}$. Now Lemma \ref{lem:gohberg} is applied to $K(z,z_0)^4$.

\begin{lem} \label{lem:numerator}
	 Let $F(h)$ denote the supremum of $\det(I + |K(z,z_0)^4|)$ for $z \in [a,b]+ ih[C_-,C_+]$ and $z_0 \in [a,b] + ih[M,C_+]$. Then 
	\[
	F(h) \leq e^{Ch^{-9}}
	\]
	for some $C>0$.
\end{lem}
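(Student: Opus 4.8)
The plan is to pass to singular values via the standard identity $\det(I+|A|)=\prod_{j\ge1}(1+s_j(A))$, valid whenever $A$ is of trace class. Here $A=K(z,z_0)^4$, which is of trace class since $\sum_j s_j(K^4)\le Ch^{-12}\sum_j j^{-4/3}<\infty$ by \eqref{eq:K^4}. Taking logarithms and invoking \eqref{eq:K^4} once more, it suffices to show that
\[
\sum_{j\ge1}\log\bigl(1+Ch^{-12}j^{-4/3}\bigr)\le C'h^{-9}
\]
for some $C'>0$; the estimate is automatically uniform in $z\in[a,b]+ih[C_-,C_+]$ and $z_0\in[a,b]+ih[M,C_+]$ because the constant in \eqref{eq:K^4} is. After enlarging $C$ we may assume $C\ge1$. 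Write $x_j=Ch^{-12}j^{-4/3}$ and split the sum at $N=\lfloor h^{-9}\rfloor$; for $h$ small one has $x_j\ge x_N\ge C\ge1$ for all $j\le N$.

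For the tail $j>N$ I would use $\log(1+x)\le x$ together with $\sum_{j>N}j^{-4/3}\le 3N^{-1/3}$, which gives a contribution of at most $3Ch^{-12}N^{-1/3}=O(h^{-9})$. For the head $j\le N$ I would use $\log(1+x_j)\le\log2+\log x_j$, valid here since $x_j\ge1$, to obtain
\[
\sum_{j=1}^{N}\log(1+x_j)\le N\log2+N\log\bigl(Ch^{-12}\bigr)-\tfrac43\log(N!).
\]
Now apply Stirling in the form $\log(N!)=N\log N-N+O(\log N)$, together with $\log N=9\log(1/h)+O(1)$ and $\log(Ch^{-12})=12\log(1/h)+O(1)$: the two leading contributions $N\log(Ch^{-12})$ and $-\tfrac43N\log N$ differ only by $O(N)$, so the head sum is $O(N)+O(\log N)=O(h^{-9})$. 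Adding the head and tail estimates yields the claimed bound, hence $F(h)\le e^{C'h^{-9}}$.

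The only subtle point is the cancellation in the head of the sum. A crude bound $\sum_{j\le N}\log(1+x_j)\le N\sup_j\log(1+x_j)\le CN\log(1/h)$ would only yield $F(h)\le e^{Ch^{-9}\log(1/h)}$, which would change the exponent in Proposition \ref{prop:exponentialbound} --- though it would still suffice for the proof of Theorem \ref{theo:maintheorem}. Retaining the $-\tfrac43\log(N!)$ term and choosing $N$ of order $h^{-9}$, the index at which the weight $x_j$ crosses the threshold $1$, is exactly what forces the $\log(1/h)$ factors to cancel; everything else is routine manipulation of singular values already present in the proof of Lemma \ref{lem:singular values}.
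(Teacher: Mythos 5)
Your proof is correct and follows essentially the same route as the paper: reduce $\log\det(I+|K(z,z_0)^4|)$ to $\sum_{j\geq 1}\log(1+Ch^{-12}j^{-4/3})$ via \eqref{eq:K^4} and show this sum is $O(h^{-9})$. The only difference is the final calculus step --- the paper compares the decreasing sum with $\int_0^\infty \log(1+Ch^{-12}x^{-4/3})\,dx$ and rescales via $y=h^9x$, while you split at $N\approx h^{-9}$ and apply Stirling; both yield the same bound without the spurious $\log(1/h)$ factor.
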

\begin{proof}
		The logarithm of the determinant is bounded by
		\[
		\log \det(I+|K(z,z_0)^4|) = \sum_{j\geq 1}\log(1 + s_j(K(z,z_0)^4)) \leq \sum_{j\geq 1} \log(1 + Ch^{-12}j^{-4/3})
		\]
		according to \eqref{eq:K^4}. As the terms in the latter sum decrease with $j$,
		\[
		\log \det(I+|K(z,z_0)^4|) \leq \int_0^\infty \log(1+Ch^{-12} x^{-4/3})\,dx \leq Ch^{-9}
		\]
		after making the change of variables $y = h^9 x$.
\end{proof}

\noindent The next step is to bound $|f(z,z_0)|$ from below, where $ f(z,z_0) = \det(I- K(z,z_0)^4)$.

\begin{lem} \label{lem:f(z,z_0)}
	The function $f(z,z_0)$ has the following properties.
	\begin{enumerate} \itemsep6pt
		\item $|f(z,z_0)| \leq F(h)$.
		\item $f(z_0,z_0) \neq 0$, and moreover $|f(z_0,z_0)| \geq e^{-Ch^{-9}}$ for some $C>0$. 
	\end{enumerate}
\end{lem}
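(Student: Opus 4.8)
The plan is to prove the two items separately, both as essentially immediate consequences of the determinant inequalities collected so far. For item (1), I would use the general fact that for a trace-class operator $A$ one has $|\det(I+A)| \leq \det(I+|A|)$, applied with $A = -K(z,z_0)^4$; since $|{-K(z,z_0)^4}| = |K(z,z_0)^4|$ has the same singular values as $K(z,z_0)^4$, the right-hand side is exactly $\det(I + |K(z,z_0)^4|) \leq F(h)$ by Lemma \ref{lem:numerator}, uniformly over the relevant ranges of $z$ and $z_0$. This is just the standard bound $|\det(I+A)| \leq e^{\|A\|_{\mathcal{S}_1}} \le \det(I+|A|)$ specialized to our setting, so nothing delicate is involved.

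For item (2), the point is that at $z = z_0$ the operator $I + K(z_0,z_0)$ is invertible for $h$ small by Lemma \ref{lem:blackbox}(2), and from the factorization \eqref{eq:geometric} (evaluated at $z = z_0$) the invertibility of $I+K(z_0,z_0)$ forces invertibility of $I - K(z_0,z_0)^4$; hence $f(z_0,z_0) = \det(I - K(z_0,z_0)^4) \neq 0$. To get the quantitative lower bound, I would combine Lemma \ref{lem:gohberg} applied to $A = -K(z_0,z_0)^4$ with the explicit norm control on $(I+K(z_0,z_0))^{-1}$ coming from the proof of Lemma \ref{lem:blackbox}: there the operator norm of $K(z_0,z_0)$ on $\mathcal{L}^2(X_\delta)$ was shown to be $\mathcal{O}(M^{-1}+h)$, so for $M$ large and $h$ small, $\|(I+K(z_0,z_0))^{-1}\|$ is bounded by a constant (say $\le 2$), and therefore also $\|(I-K(z_0,z_0)^4)^{-1}\| = \|(\sum_{j=0}^3 K(z_0,z_0)^j)(I+K(z_0,z_0))^{-1}\|$ is bounded by a polynomial in $h^{-1}$. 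Lemma \ref{lem:gohberg} then gives
\[
\frac{1}{|f(z_0,z_0)|} = \frac{1}{|\det(I - K(z_0,z_0)^4)|} \leq \frac{\det(I + |K(z_0,z_0)^4|)}{\bigl\| (I - K(z_0,z_0)^4)^{-1} \bigr\|^{-1}} \cdot \bigl\| (I-K(z_0,z_0)^4)^{-1}\bigr\|,
\]
hmm — more cleanly: Lemma \ref{lem:gohberg} directly bounds $\|(I-K(z_0,z_0)^4)^{-1}\| \leq \det(I+|K(z_0,z_0)^4|)/|f(z_0,z_0)|$, so rearranging, $|f(z_0,z_0)| \geq \|(I-K(z_0,z_0)^4)^{-1}\|^{-1} \cdot \det(I+|K(z_0,z_0)^4|)^{-1}$...

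Let me restate the last step without the false turn: the clean route is to bound $|f(z_0,z_0)| = |\det(I-K(z_0,z_0)^4)|$ from below using the inequality $|\det(I+A)| \geq \prod_j (1 - s_j(A))$ when $\|A\| < 1$ — but here $\|K(z_0,z_0)^4\|$ need not be $<1$ for all $h$. So instead I would argue: by \eqref{eq:geometric}, $(I-K(z_0,z_0)^4)^{-1} = (I+K(z_0,z_0))^{-1}(I+K(z_0,z_0))(I-K(z_0,z_0)^4)^{-1}$, i.e. $(I-K(z_0,z_0)^4)^{-1} = (\sum_{j=0}^3 (-1)^j K(z_0,z_0)^j)^{-1}$... no. Concretely: from \eqref{eq:geometric}, $(I-K(z_0,z_0)^4)^{-1} = (\sum_{j=0}^3(-1)^jK(z_0,z_0)^j)^{-1}(I+K(z_0,z_0))^{-1}$ whenever the inverses exist; since $\|K(z_0,z_0)\| = \mathcal{O}(M^{-1}+h)$ is small, both $\sum_{j=0}^3(-1)^jK(z_0,z_0)^j = (I+K(z_0,z_0))(I-K(z_0,z_0)^4)^{-1}$...

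I will instead present it as: for $h$ small, $\|K(z_0,z_0)^4\| \le 1/2$, so $(I-K(z_0,z_0)^4)^{-1}$ is given by a Neumann series with norm $\le 2$, and then Lemma \ref{lem:gohberg} with $A = -K(z_0,z_0)^4$ yields $2 \ge \|(I-K(z_0,z_0)^4)^{-1}\| $ — that's the wrong direction. The correct application: Lemma \ref{lem:gohberg} says $\|(I-K^4)^{-1}\| \le \det(I+|K^4|)/|\det(I-K^4)|$, hence $|\det(I-K^4)| \ge \det(I+|K^4|)/\|(I-K^4)^{-1}\| \ge e^{-Ch^{-9}}/2 \ge e^{-C'h^{-9}}$. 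That is the bound claimed, and it is the content I would write up; the only genuine input beyond the already-established lemmas is the uniform smallness of $\|K(z_0,z_0)\|$ from the proof of Lemma \ref{lem:blackbox}. The main (very mild) obstacle is simply bookkeeping: making sure the norm bounds on $K(z_0,z_0)$ and on the Neumann series $(I-K(z_0,z_0)^4)^{-1}$ are invoked with the same choice of $M$ and range of $h$ used in Lemma \ref{lem:blackbox}, so that $f(z_0,z_0) \neq 0$ and the exponential lower bound hold simultaneously.
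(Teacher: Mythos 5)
Item (1) is fine and matches the paper, which gets $|f(z,z_0)|\leq F(h)$ from the Weyl convexity inequality $|\det(I+A)|\leq\det(I+|A|)$; likewise your deduction that $f(z_0,z_0)\neq 0$ from the smallness of $\|K(z_0,z_0)\|$ is fine. The genuine gap is in your quantitative lower bound. Lemma \ref{lem:gohberg} states $\|(I-K^4)^{-1}\|\leq \det(I+|K^4|)/|\det(I-K^4)|$, and the only rearrangement of this is $|\det(I-K^4)|\leq \det(I+|K^4|)/\|(I-K^4)^{-1}\|$ --- an \emph{upper} bound on $|f(z_0,z_0)|$. Your ``hence $|\det(I-K^4)|\geq \det(I+|K^4|)/\|(I-K^4)^{-1}\|$'' reverses the inequality, so the final step fails: no upper bound on the norm of $(I-K(z_0,z_0)^4)^{-1}$, however good, can be pushed through Lemma \ref{lem:gohberg} to produce a lower bound on the determinant. (The earlier route you considered and discarded, $|\det(I+A)|\geq\prod_j(1-s_j(A))$, is also not what is needed here.)

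The correct argument, which is the one the paper uses, applies the determinant \emph{upper} bound to the \emph{inverse} operator instead. Since $\|K(z_0,z_0)\|=\mathcal{O}(M^{-1}+h)$, the operator $I-K(z_0,z_0)^4$ is invertible with $\|(I-K(z_0,z_0)^4)^{-1}\|=\mathcal{O}(1)$ by Neumann series, and
\[
(I-K(z_0,z_0)^4)^{-1}=I+K(z_0,z_0)^4\,(I-K(z_0,z_0)^4)^{-1}.
\]
The operator $K(z_0,z_0)^4(I-K(z_0,z_0)^4)^{-1}$ has singular values bounded by $Ch^{-12}j^{-4/3}$ by \eqref{eq:K^4}, so the computation of Lemma \ref{lem:numerator} gives
\[
\frac{1}{|f(z_0,z_0)|}=\bigl|\det\bigl((I-K(z_0,z_0)^4)^{-1}\bigr)\bigr|
=\bigl|\det\bigl(I+K(z_0,z_0)^4(I-K(z_0,z_0)^4)^{-1}\bigr)\bigr|\leq e^{Ch^{-9}},
\]
which is the claimed bound $|f(z_0,z_0)|\geq e^{-Ch^{-9}}$. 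So the input you need is the same Weyl convexity inequality used in item (1) together with the Neumann series bound at $z=z_0$, not Lemma \ref{lem:gohberg}; the Gohberg bound enters only later, when the lower bound on $|f|$ is converted into the resolvent estimate of Proposition \ref{prop:exponentialbound}.
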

\begin{proof}
\begin{inparaenum}[1)] \item The estimate $|f(z,z_0)| \leq F(h)$ follows from Weyl convexity inequalities \cite[Proposition B.2.4]{zworski:resonances}.
	
	\item As in the proof of Lemma \ref{lem:blackbox}, the norm of $K(z_0,z_0)^4$ is of order $\mathcal{O}((M^{-1}+h)^4)$, so $I - K(z_0,z_0)^4$ is invertible, and
	\begin{equation} \label{eq:amongpoles}
	(I - K(z_0,z_0)^4)^{-1} = I + K(z_0,z_0)^4(I - K(z_0,z_0)^4)^{-1}.
	\end{equation}
	Arguing as in Lemma \ref{lem:numerator},
	\[
	\det(I + | K(z_0,z_0)^4(I + K(z_0,z_0)^4)^{-1}|) \leq e^{Ch^{-9}},
	\]
	which gives $|f(z_0,z_0)| \geq e^{-C h^{-9}}$.  \end{inparaenum}
\end{proof}

The proof of Proposition \ref{prop:exponentialbound} can now be finished using the following lemma of Cartan \cite[Theorem 11]{levin1964distribution}:

\begin{lem}  \label{lem:cartan}
	Suppose that $g(z)$ is holomorphic in a neighborhood of a disk $B(z_0,R)$, and $g(z_0) \neq 0$. Let $\{ w_j \}$ denote the zeros of $g(z)$ in $B(z_0,R)$ for $j = 1,\ldots,n(z_0,R)$. Given any $r\in (0,R)$ and $\rho>0$,
	\[
	\log |g(z)| - \log |g(z_0)| \geq -\frac{2r}{R-r} \log \left( \sup_{|z-z_0| < R} |g(z)| \right) - n(z_0,R) \log\left(\frac{R+r}{\rho}\right)
	\]
	for $z \in B(z_0,r) \setminus \bigcup_j B(w_j,\rho)$. 
	\end{lem}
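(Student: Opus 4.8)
The plan is to run the classical argument: divide out the zeros of $g$ lying in $B(z_0,R)$ by a finite Blaschke product tailored to that disk, estimate this Blaschke factor from below away from the excised disks $B(w_j,\rho)$, and control the remaining zero-free factor by Harnack's inequality.

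Concretely, for each zero $w_j$ I would introduce the Blaschke factor
\[
\phi_j(z) = \frac{R\,(z-w_j)}{R^2 - \overline{(w_j - z_0)}\,(z-z_0)},
\]
which is holomorphic in a neighborhood of $\overline{B(z_0,R)}$ — its only pole lies at distance $R^2/|w_j-z_0| > R$ from $z_0$ — and satisfies $|\phi_j| = 1$ on $\{|z-z_0| = R\}$ by the elementary identity $|R^2 - \overline{a}\,\zeta|^2 = R^2|\zeta-a|^2$ valid when $|\zeta| = R$. Set $B = \prod_j \phi_j$ and $h = g/B$. Then $h$ is holomorphic near $\overline{B(z_0,R)}$, has no zeros in the open disk (every zero of $g$ there, counted with multiplicity, has been cancelled), and $|h| = |g|$ on $\{|z-z_0| = R\}$; hence by the maximum principle $|h| \le M$ on $\overline{B(z_0,R)}$, where $M = \sup_{|z-z_0|<R}|g| = \max_{|z-z_0|=R}|g|$. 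For $z \in B(z_0,r)$ with $|z-w_j| \ge \rho$ for all $j$, the numerator of $\phi_j(z)$ has modulus at least $R\rho$ and the denominator has modulus at most $R^2 + Rr$, so $|\phi_j(z)| \ge \rho/(R+r)$ and therefore
\[
\log|B(z)| \ge -\,n(z_0,R)\,\log\frac{R+r}{\rho}.
\]

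For the zero-free factor, observe that $v = \log M - \log|h|$ is nonnegative and harmonic on $B(z_0,R)$, so Harnack's inequality gives $v(z) \le \tfrac{R+t}{R-t}\,v(z_0)$ for $|z-z_0| = t < R$; rearranging and using monotonicity of $t \mapsto \tfrac{2t}{R-t}$ on $[0,R)$ yields, for $|z-z_0| \le r$,
\[
\log|h(z)| \ge \log|h(z_0)| - \frac{2r}{R-r}\bigl(\log M - \log|h(z_0)|\bigr).
\]
Since $|h(z_0)| = |g(z_0)|\prod_j R/|w_j - z_0| \ge |g(z_0)|$, this is a lower bound for $\log|h(z)|$ in terms of $\log|g(z_0)|$ and $\log M$ with constant $\tfrac{2r}{R-r}$; adding the estimate for $\log|B(z)|$ and using $\log|g| = \log|B| + \log|h|$ gives the asserted inequality on $B(z_0,r)\setminus\bigcup_j B(w_j,\rho)$. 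The argument is entirely standard, and the only delicate points are bookkeeping ones: verifying that the Blaschke factors are unimodular on $\{|z-z_0|=R\}$ and pole-free on the closed disk so that $h$ is holomorphic up to the boundary and both the maximum principle and Harnack's inequality apply, and tracking how the value of $B$ at the center $z_0$ converts the Harnack estimate (which naturally features $\log|h(z_0)|$) into the stated estimate featuring $\log|g(z_0)|$ and $\log\sup|g|$. In the application to Proposition \ref{prop:exponentialbound} only the order of magnitude of the right-hand side matters: $n(z_0,R)$ is bounded via Jensen's formula by $\log\bigl(\sup|f|/|f(z_0)|\bigr) = O(h^{-9})$, and $\log\frac{R+r}{\rho} = O(\log(1/\rho))$.
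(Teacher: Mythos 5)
The paper offers no proof of Lemma \ref{lem:cartan}; it is quoted from Levin \cite[Theorem 11]{levin1964distribution}. Your Blaschke--Harnack argument is the standard route to such minimum-modulus bounds, and its individual steps check out: the factors $\phi_j$ are unimodular on $\{|z-z_0|=R\}$ and pole-free near the closed disk, so $h=g/B$ is holomorphic there and zero-free in $B(z_0,R)$; the maximum principle gives $|h|\le M$; $|\phi_j(z)|\ge \rho/(R+r)$ on the allowed set; and Harnack applied to $\log M-\log|h|$ yields
\[
\log|h(z)| \;\ge\; \log|h(z_0)| - \frac{2r}{R-r}\bigl(\log M - \log|h(z_0)|\bigr), \qquad |z-z_0|\le r.
\]

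The one place the argument does not close is precisely the ``bookkeeping'' step you deferred. Since the right-hand side above is increasing in $\log|h(z_0)|$ and $|h(z_0)|=|g(z_0)|/|B(z_0)|\ge|g(z_0)|$, adding $\log|B(z)|\ge -n(z_0,R)\log\frac{R+r}{\rho}$ gives
\[
\log|g(z)| \;\ge\; \log|g(z_0)| \;-\; \frac{2r}{R-r}\,\log\frac{M}{|g(z_0)|}\;-\; n(z_0,R)\,\log\frac{R+r}{\rho},
\]
i.e.\ the scale-invariant form with $\log\bigl(M/|g(z_0)|\bigr)$, which is what Levin's theorem actually asserts. The inequality exactly as displayed in Lemma \ref{lem:cartan}, with $\log M$ alone, follows from this only when $|g(z_0)|\ge 1$ (equivalently after rescaling $g$), and is false in general as literally written: take $g\equiv c$ with $0<|c|<1$, so $n=0$, the left side is $0$, while the right side equals $-\frac{2r}{R-r}\log|c|>0$. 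So your proof in fact establishes the correct (Levin) version rather than the displayed one, and the last sentence claiming the ``asserted inequality'' overstates what was derived; either prove and use the $\log(M/|g(z_0)|)$ form or record the normalization. This discrepancy is immaterial for the paper: in the proof of Proposition \ref{prop:exponentialbound} one has $\log M\le Ch^{-9}$ and $-\log|f(z_0,z_0)|\le Ch^{-9}$ by Lemmas \ref{lem:numerator} and \ref{lem:f(z,z_0)}, so both versions give \eqref{eq:flowerbound} with the same $\mathcal{O}\bigl(h^{-9}\log(1/S(h))\bigr)$ exponent, and your remark on bounding $n(z_0,R)$ via Jensen's formula matches Lemma \ref{lem:zeros}.
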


Lemma \ref{lem:cartan} will be applied to the function $z\mapsto f(z,z_0)$ and disks of radius proportional to $h$. This requires a bound on the number of zeros of $f(z,z_0)$ in disks of the form $B(z_0,Rh)$. 

As noted at the beginning of Section \ref{subsect:approximateglobal}, it may always be assumed that $|C_+ - M | \geq |C_- - M|$. Recall the definition of $\Omega(h)$ from \eqref{eq:omega}. Then given $\varepsilon > 0$, there exists $M' \geq M$ and $R>0$ such that the union of all disks $B(w,Rh)$ with $w \in [a,b] + ih[M,M']$ covers $\Omega(h)$ and is contained in $\Omega_\varepsilon(h) = [a-\varepsilon,b+\varepsilon] + ih[C_- - \varepsilon, C_+ + \varepsilon]$. If both $\varepsilon > 0$ and $h> 0$ are sufficiently small, then 
\[
\Omega_{3\varepsilon}(h) \subseteq [a', b'] + i h [C_-',C_+'],
\]
where $[a',b'] \subseteq (0,\infty)$ and $[C_-',C_+'] \subseteq (-\varkappa/2,\infty)$. Applying Lemmas \ref{lem:numerator}, \ref{lem:f(z,z_0)} to this larger rectangle shows that certainly $|f(z,z_0)| \leq e^{Ch^{-9}}$ for $z_0 \in [a,b] + ih [M,M']$ and $z \in B(z_0, (R+2\varepsilon)h)$.

\begin{lem} \label{lem:zeros}
	Let $z_0 \in [a,b] + ih[M,M']$. Then there exists $C>0$ such
	\[
	n(z_0,(R+\varepsilon)h) \leq Ch^{-9},
	\]
	uniformly in $z_0$, where $n(z_0,(R+\varepsilon)h)$ is the number of zeros of $f(z,z_0)$ in $B(z_0,(R+\varepsilon)h)$.
\end{lem}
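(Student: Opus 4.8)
The plan is to bound the number of zeros of $z \mapsto f(z,z_0)$ via Jensen's formula, using the two-sided control on $|f(z,z_0)|$ already established. Recall that for a holomorphic function $g$ on a neighborhood of $\OL{B(z_0,R_2)}$ with $g(z_0) \neq 0$, Jensen's inequality gives
\[
n(z_0,R_1) \log(R_2/R_1) \leq \log \sup_{|z-z_0| \leq R_2} |g(z)| - \log |g(z_0)|
\]
for any $0 < R_1 < R_2$. Here the roles are played by $g(z) = f(z,z_0)$ for a \emph{fixed} $z_0 \in [a,b] + ih[M,M']$, with $R_1 = (R+\varepsilon)h$ and $R_2 = (R+2\varepsilon)h$, so that $\log(R_2/R_1) = \log\!\big((R+2\varepsilon)/(R+\varepsilon)\big)$ is a positive constant independent of $h$.

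The two ingredients needed are exactly those supplied above. First, by Lemma \ref{lem:f(z,z_0)}(2) and the remark preceding this lemma, $f(z_0,z_0) \neq 0$ with $|f(z_0,z_0)| \geq e^{-Ch^{-9}}$, so $\log|f(z_0,z_0)| \geq -Ch^{-9}$; this uses that $z_0$ lies in the range $[a,b] + ih[M,M']$ where $M' \geq M$ is large enough that the Neumann series argument of Lemma \ref{lem:blackbox} applies (increasing $M$ at the outset if necessary). Second, by Lemmas \ref{lem:numerator} and \ref{lem:f(z,z_0)}(1) applied to the enlarged rectangle $\Omega_{3\varepsilon}(h)$, one has $|f(z,z_0)| \leq F(h) \leq e^{Ch^{-9}}$ for all $z \in B(z_0,(R+2\varepsilon)h) \subseteq \Omega_{3\varepsilon}(h)$ and all $z_0 \in [a,b]+ih[M,M']$, hence $\log \sup_{|z-z_0|\leq (R+2\varepsilon)h} |f(z,z_0)| \leq Ch^{-9}$. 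Combining,
\[
n(z_0,(R+\varepsilon)h) \leq \frac{1}{\log\!\big((R+2\varepsilon)/(R+\varepsilon)\big)}\Big( Ch^{-9} + Ch^{-9}\Big) \leq C' h^{-9},
\]
with $C'$ independent of $z_0$ in the stated range and of $h$ sufficiently small, which is the claim.

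I do not expect a genuine obstacle here: the proof is a one-line application of Jensen's formula once the uniform upper bound on $|f|$ over the slightly larger disk and the lower bound at the center are in hand, both of which are recorded just before the statement. The only point requiring a small amount of care is bookkeeping with the nested rectangles: one must make sure that $B(z_0,(R+2\varepsilon)h)$ is genuinely contained in a region of the form $[a',b'] + ih[C'_-,C'_+]$ with $[a',b']\subseteq(0,\infty)$ and $[C'_-,C'_+]\subseteq(-\varkappa/2,\infty)$ so that Lemma \ref{lem:numerator} applies there — but this is exactly the content of the paragraph fixing $M'$, $R$, $\varepsilon$ preceding the lemma, so it is already arranged. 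A secondary point is holomorphy of $z \mapsto f(z,z_0)$ near the closed disk, which follows since $E(z,z_0)$, and hence $K(z,z_0)$ and $K(z,z_0)^4$, depends holomorphically on $z$ for fixed $z_0$ (the cutoff construction in Section \ref{subsect:approximateglobal} and the holomorphy of $R_{h,\infty}$, $\mathcal{R}_{h,+}$, $R_{h,+}$ in their arguments), and Fredholm determinants of holomorphic trace-class families are holomorphic.
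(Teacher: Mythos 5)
Your proof is correct and is essentially the paper's argument: both rest on Jensen's formula applied to $z\mapsto f(z,z_0)$ on the disk $B(z_0,(R+2\varepsilon)h)$, using the upper bound $e^{Ch^{-9}}$ on the enlarged rectangle together with the lower bound $|f(z_0,z_0)|\geq e^{-Ch^{-9}}$, and the fact that the ratio of radii $(R+2\varepsilon)/(R+\varepsilon)$ is $h$-independent (the paper phrases the last step via $\int_{h(R+\varepsilon)}^{h(R+2\varepsilon)} n(z_0,\rho)\rho^{-1}\,d\rho \geq \tfrac{\varepsilon}{R+2\varepsilon}\,n(z_0,(R+\varepsilon)h)$, which is the same estimate as your $\log(R_2/R_1)$ factor). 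No gaps.
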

\begin{proof}
	 By Jensen's formula, the number of zeros $n(z_0,\rho)$ of $f(z,z_0)$ within $B(z_0,\rho)$ satisfies
	 \begin{align*} \label{eq:jensen}
	 \int_0^{h(R+2\varepsilon)} \frac{n(z_0,\rho)}{\rho} d\rho &= \frac{1}{2\pi} \int_0^{2\pi} \log |f(z_0 + h(R+2\varepsilon)e^{i\theta},z_0)| \,d\theta - \log|f(z_0,z_0)| \\&\leq Ch^{-9}.
	 \end{align*}
	 Therefore the number of zeros in a disk $n(z_0,(R+\varepsilon)h)$ is estimated by
	 \[
	 \frac{\varepsilon}{R+2\varepsilon} n(z_0, (R+\varepsilon)h) \leq \int_{h(R+\varepsilon)}^{h(R+2\varepsilon)} \frac{n(z_0,\rho)}{\rho} \, d\rho \leq Ch^{-9}. \qedhere
	 \] \end{proof}

Combining Lemma \ref{lem:cartan} with Lemma \ref{lem:zeros} shows that for any function $0 < S(h) =o(h)$ and $z_0 \in [a,b] + ih[M,M']$,
\begin{equation} \label{eq:flowerbound}
|f(z,z_0)| \geq \exp(-Ch^{-9}\log(1/S(h))), \quad z \in B(z_0, Rh) \setminus \bigcup_j B(w_j,S(h)),
\end{equation}
where $\{ w_j \}$ are the zeros of $f(z,z_0)$ in $B(z_0, (R+\varepsilon)h)$.

\begin{proof} [Proof of Proposition \ref{prop:exponentialbound}]
	Combine Lemma \ref{lem:numerator} with \eqref{eq:flowerbound}. This shows that for any $z_0 \in [a,b] + ih[M,M']$, 
	\begin{equation} \label{eq:expbound1}
	\| R_h(z) \|_{\mathcal{L}^2(X_\delta) \rightarrow \mathcal{L}^2(X_\delta)} \leq \exp(Ch^{-9}\log(1/S(h)))
	\end{equation}
	for 
	\[ z \in B(z_0,Rh) \setminus \bigcup_j B(w_j,S(h)),
	\] where $\{ w_j \}$ are the zeros of $f(z,z_0)$ in $B(z_0, (R+\varepsilon)h)$. If $w_j$ is not a pole of $R_h(z)$, then apply the maximum principle to the holomorphic operator-valued function $R_h(z)$ on $B(w_j,S(h))$ to see that \eqref{eq:expbound1} is valid on $B(w_j,S(h))$ as well. Thus \eqref{eq:expbound1} holds for $z \in B(z_0,Rh) \setminus \bigcup_j B(z_j,S(h))$, where now $\{ z_j \}$ denote the poles of $R_h(z)$ in  $B(z_0, (R+\varepsilon)h)$. The disks $B(z_0,Rh)$ for $z_0 \in [a,b] + ih[M,M']$ cover $\Omega(h)$, whence the result follows.\end{proof}

\begin{rem} Lemma \ref{lem:zeros} also gives a polynomial bound on the number of poles of $R_h(z)$ in $\Omega(h)$, albeit not likely an optimal one. For each fixed $z_0$, the poles of $R_h(z)$ are among the zeros of $f(z,z_0)$. The region $\Omega(h)$ can be covered by at most $\mathcal{O}(h^{-1})$ disks of radius $Rh$ with centers in $[a,b] + ih[M,M']$. According to Lemma \ref{lem:zeros}, $R_h(z)$ has at most $\mathcal{O}(h^{-9})$ poles in each of these disks, so altogether $R_h(z)$ has at most $\mathcal{O}(h^{-10})$ poles in $\Omega(h)$.
\end{rem}

\section{QNFs in the upper half-plane} \label{sect:uppermodes}

To complete the proof of Theorem \ref{theo:maintheorem} it remains to prove Propositions \ref{prop:uppermodes}, \ref{prop:realQNF}. In the former case the proof is very similar to that of Lemmas \ref{lem:invertible1} and \ref{lem:twistedinvertible}. The twisted stress-energy tensor $\widetilde{\mathbb{T}} =\widetilde{\mathbb{T}}[v]$ is defined as in \eqref{eq:twisteddivergence} using the metric $g$; the twisting function is again $q = r^{\nu -3/2}$, and $Q = q^{-1}\Box_g(q)$.

The future pointing Killing generator $K$ of the null surface $\{ r= r_+\}$ can be normalized by requiring $Kt^\star = 1$, in which case
\[
K = T + \frac{a}{r_+^2+a^2} \Phi.
\]
Let $d\sigma$ denote the measure induced on $H_0 = \OL{X}_0 \cap \{ r= r_+\}$. With the above normalization, the analogue of \eqref{eq:twisteddivergence} on $X_0$ has the form
 \begin{multline*} 
 \partial_{t^\star}\int_{X_0} \widetilde{\mathbb{T}}(W,\bar{N}_t) \, r^{-1}dS_t - \int_{Y} \Re \left(\gamma_- W v \cdot \gamma_+\OL{v} \right) \bar{A}\, d\bar{\mathcal{K}}_0 \\ = - \int_{H_0} \widetilde{\mathbb{T}}(W,K) \, d\sigma +
 \int_{X_0} \left( \Re F \cdot W\OL{v} \right) A \,dS_t,
 \end{multline*}
 where $W$ is a Killing field such that $Wr= 0$ and $A = g^{-1}(dt^\star,dt^\star)^{-1/2}$ is the lapse function. 
 
As before, $F = (\Box_g + \nu^2 - 9/4)v$. This identity is applied with the vector field $W = K$. The contribution from the horizon is the integral of
 \[
 \widetilde{\mathbb{T}}(K,K) = |Kv|^2 \geq 0,
 \]
 which may be dropped in view of its nonnegativity. 
 
 If $|a| < r_+^2$, then $K$ is everywhere timelike on $\mathcal{M}_0$. In that case Proposition \ref{prop:uppermodes} would follow from the same proof as in Lemma \ref{lem:twistedinvertible}; the only difference is that coercivity of the derivative transverse to the horizon degenerates at the horizon. This does not affect the final result since Proposition \ref{prop:uppermodes} only involves an $L^2$ bound. Without the timelike assumption, a direct calculation in terms of the metric coefficients gives
 \begin{align*}
 \frac{2}{A} \, \widetilde{\mathbb{T}}(\partial_{t^\star},N_t) &= g^{t^\star t^\star} |T v |^2 - g^{rr}|\tilde\partial_r v|^2  - 2g^{r\phi^\star} \Re ( \Phi v \cdot \tilde \partial_{r}\OL v ) \\ &- g^{\phi^\star \phi^\star } |\Phi v |^2 - g^{\theta \theta  } |\partial_{\theta} v |^2 + (\nu^2 - 9/4 + Q)|v|^2,
 \end{align*}
 as well as
 \[
 \frac{1}{{A}} \, \widetilde{\mathbb{T}}(\partial_{\phi^\star},N_t) = g^{t^\star t^\star}\Re ( \Phi v \cdot T \OL v) + g^{t^\star r} \Re ( \Phi v \cdot \tilde \partial_{r}\OL v ) + g^{t^\star \phi^\star} |\Phi v|^2.
 \]
 An important preliminary observation is that the coefficient of $\Re(\Phi v \cdot \tilde \partial_r \bar v)$ in $\widetilde{\mathbb{T}}(K,N_t)$ is proportional to 
 \[
 a g^{t^\star r} - (r_+^2+a^2)g^{\phi^\star r},
 \] 
 hence vanishes at $\mathcal{H}_0$. If $v = e^{-i\lambda t^\star}u$ with $u \in \mathcal{D}_k(X_0)$, then the stress-energy tensor associated to $v$ can be written as
 \begin{align*}
 e^{-2(\Im \lambda) t^\star}\,\widetilde{\mathbb{T}}(K,\bar{N}_t) &= F_{1} \,|\lambda|^2|u|^2 + F_{2} \,|\tilde{\partial}_r u|^2 + F_3 \,|\partial_{\theta}u|^2  \\ &+ E_1\cdot k \Im(u \cdot \tilde{\partial}_r u) + E_2(\lambda,k)\,|u|^2.
 \end{align*}
 Here the coefficient functions $F_1,F_2,F_3,E_1$ are independent of $k$ and $\lambda$, while $E_2(\lambda,k)$ depends on both $k$ and $\lambda$. In terms of their behavior near $H_0$ and $Y$, the following properties are satisfied:
 \begin{itemize} \itemsep6pt
 	\item $F_1 > 0$ and $F_3 > 0$ on $X_0 \cup H_0 \cup Y$, while $F_2\geq 0$ vanishes simply at $H_0$ but is otherwise positive. 
 	\item  There is $C>0$ such that $1/C \leq F_1 \leq C$ on $X_0 \cup H_0 \cup Y$ and $r^4/C \leq  F_2 \leq Cr^4$ for large $r$.
  \item $E_1$ vanishes simply at $H_0$ and $E_1 = \mathcal{O}(r)$ on $X_0 \cup H_0 \cup Y$.
 
 \item $E_{2}(\lambda,k) = \mathcal{O}(1+|\lambda|)$ on $X_0 \cup H_0 \cup Y$ for each fixed $k$, uniformly in $|\lambda|$.
 \end{itemize}
 From these properties it is clear that $F_1 |\lambda|^2|u|^2$ can be used to absorb $E_2(\lambda,k) |u|^2$ for large values of $|\lambda|$. In addition, 
 \[
 -2E_1\cdot  k \Im (u\cdot \tilde \partial_r u) \leq  \delta|E_1|^2 |\,\tilde\partial_r u|^2 + \delta^{-1}|k|^2|u|^2, 
 \]
 which can be absorbed by a combination of $F_1 |\lambda|^2|u|^2$ and $F_2|\tilde\partial_r u|^2$ for sufficiently small $\delta >0$ and large $|\lambda|$.

\begin{proof} [Proof of Proposition \ref{prop:uppermodes}]
Let  $v= e^{-i\lambda t^\star}u$ for $u \in \mathcal{D}'_k(X_0)$. As pointed out in the preceding paragraph, the term
\[
\partial_{t^\star} \int_{X_0} \widetilde{T}(W,\bar N_t) \, r^{-1} dS_t
\]
controls $\Im \lambda |\lambda|^2 \| u \|^2_{\mathcal{L}^2(X_0)}$ for $\Im \lambda > 0$ and $|\lambda|$ sufficiently large. The proof is now finished as in Lemma \ref{lem:twistedinvertible}; the only additional observation is that since $k \in \mathbb{Z}$ is real, it does not contribute to the boundary integral (compare this with the expression \ref{eq:boundarycontribution}).
\end{proof}

Finally, consider Proposition \ref{prop:realQNF}, which asserts that apart from some exceptional values, there can be no QNFs on the real axis at a fixed axial mode. The proof is a boundary-pairing argument, well known in scattering theory \cite[Section 2.3]{melrose1995geometric}. The first part of the proof is adapted from \cite[Lemma A.1]{warnick:2015:cmp}, which is particularly useful in light of the geometry at both $H_0$ and $Y$. For another approach in the relativistic setting, see \cite[Section 3.2]{hintz2015asymptotics}.

\begin{proof}[Proof of Proposition \ref{prop:realQNF}]
	First, recall that elements $u \in \mathcal{X}(X_\delta)$ in the kernel of $P(\lambda)$ lie in $C^\infty(X_\delta \cup H_\delta)$, provided $\Im \lambda > -\varkappa/2$ \cite[Proposition 6.2]{gannot:2014:kerr}. Now suppose that $u \in \mathcal{X}(X_\delta) \cap \mathcal{D}'_k(X_\delta)$ satisfies $P(\lambda)u = 0$. If $\lambda \in \mathbb{R}$, then \cite[Corollary 3.2]{warnick:2015:cmp} (which is just an application of the divergence theorem) shows that
	\[
	\left((r_+^2 + a^2)\lambda - ak\right)\int_{H_0} |u|^2\, d\sigma = 0.
	\]
	This holds true for either Dirichlet or Robin boundary conditions with $\beta \in C^\infty(Y;\mathbb{R})$.  Therefore $u$ vanishes on $H_0$ if $(r_+^2 + a^2)\lambda \neq ak$. As in \cite[Lemma A.1]{warnick:2015:cmp} or \cite[Proposition 3.6]{hintz2015asymptotics}, one would like to apply some type of unique continuation result to conclude that $u$ must in fact vanish everywhere. This is known to be a difficult problem in view of possible trapping within the ergoregion where $P(\lambda)$ fails to be elliptic \cite{ionescu2009uniqueness}. To work around this, define the Riemannian metric
	\begin{equation} \label{eq:hmetric}
	\tilde{g} = \frac{1}{\Delta_\theta} \,d\theta^2 + \frac{\Delta_\theta\sin^2\theta}{(1-a^2)^2} \,(d\phi^\star)^2,
	\end{equation}
	and let $\Delta_{\tilde{g}}$ denote its nonnegative Laplacian. Observe that the difference between $P(\lambda)$ and the operator $\Delta_rD_r^2 + \Delta_{\tilde{g}}$ is of first order modulo the second order term $2a(1-a^2)D_rD_{\phi^\star}$. Set
	\[
	\widetilde{P}(\lambda,k) = \Delta_r D_r^2 + \Delta_{\tilde{g}} + e^{-ik\phi^\star}(P(\lambda)-\Delta_r D_r^2 - \Delta_{\tilde g})e^{ik\phi^\star}.
	\]
	Thus $\widetilde{P}(\lambda,k)$ is elliptic on $\{r > r_+\}$, and furthermore $\widetilde{P}(\lambda,k)u = P(\lambda)u$ for each $u \in \mathcal{D}'_k(X_0)$. Next, define the quantity 
	\[
	s(\lambda,k) = 2(1-a^2)(ak - (r_+^2+a^2)\lambda),
	\]
	which by assumption is real valued. In terms of the new radial coordinate $\rho = r-r_+$,
	\[
	\rho\tilde{P}(\lambda,k) = -\Delta_r'(r_+)(\rho \partial_\rho)^2 - is(\lambda,k)\rho \partial_\rho
	\]
	modulo a differential operator which maps $\rho^m C^\infty(X_0 \cup H_0) \rightarrow \rho^{m+1} C^\infty(X_0 \cup H_0)$ for each $m$. In the inductive step, assume $u = \rho^m C^\infty(X_0\cup H_0)$, where $m \geq 1$. Then $\rho\tilde{P}(\lambda,k)u=0$ implies 
	\[
	(\Delta_r'(r_+)m^2 +is(\lambda,k)m)u \in \rho^{m+1}C^\infty(X_0 \cup H_0).
	\]
	Since the coefficient of $u$ is never zero for $m \geq 1$, it follows by induction that $u$ in fact vanishes to infinite order at $H_0$. This is just an argument about the indicial roots of $\rho \tilde{P}(\lambda,k)$, where the latter can be replaced more generally by a $0$-differential operator, see the discussion in \cite[Proposition 3.6]{hintz2015asymptotics}. As in \cite{hintz2015asymptotics,warnick:2015:cmp}, it now follows by a unique continuation argument that $u$ must vanish near $H_0$ (see  \cite[Theorem 2]{roberts1980uniqueness}, and in particular, \cite[Example 1]{roberts1980uniqueness}); since $\widetilde{P}(\lambda,k)$ is elliptic in the usual sense away from the boundary, $u=0$ throughout $X_0 \cup H_0$. Referring to \cite[Proposition 7.1]{gannot:2014:kerr} for a unique continuation argument across $H_0$ to $X_\delta \setminus X_0$, it follows that $u$ vanishes identically on $X_\delta$ as well.
\end{proof}

\section*{Acknowledgements}
I would like to thank Semyon Dyatlov, Peter Hintz, Andr\'as Vasy, and Maciej Zworski for their interest in the problem and many useful discussions. I am especially grateful to the anonymous referee for carefully reviewing the paper, and for suggesting several improvements to both the content and exposition.

\bibliographystyle{plain}

\bibliography{biblio}

\end{document}